\newcommand{\N}{\mathbb{N}}
\newcommand{\Z}{\mathbb{Z}}
\newcommand{\End}{\mathrm{End}}
\newcommand{\Aut}{\mathrm{Aut}}
\newcommand{\Cay}{\mathrm{Cay}}
\theoremstyle{plain}
\newtheorem{theorem}{Theorem}[section]
\newtheorem{lemma}[theorem]{Lemma}
\newtheorem{corollary}[theorem]{Corollary}
\newtheorem{proposition}[theorem]{Proposition}
\theoremstyle{definition}
\newtheorem{conjecture}[theorem]{Conjecture}
\newtheorem{question}[theorem]{Question}
\newtheorem{remark}[theorem]{Remark}
\title[Beyond symmetry in Petersen graphs]{Beyond symmetry in generalized Petersen graphs}
\author[I. Garc\'{i}a-Marco]{Ignacio Garc\'{i}a-Marco $^*$}
\address{Facultad de Ciencias, Universidad de La Laguna, La Laguna, Spain}
\author[K. Knauer]{Kolja Knauer}
\address{Aix Marseille Univ, Universit\'e de Toulon, CNRS, LIS, Marseille, France\\Departament de Matem\`atiques i Inform\`atica,
Universitat de Barcelona, Barcelona, Spain}
\keywords{Generalized Petersen graph, endomorphism, retract, core, Cayley graph, monoid.  \\ \ \ \ $ ^*$ Corresponding author}
\subjclass[2010]{06A11, 06A07, 20M99}
\begin{document}

\begin{abstract}
A graph is a \emph{core} or \emph{unretractive} if all its endomorphisms are automorphisms. Well-known examples of cores include the Petersen graph and the graph of the dodecahedron -- both generalized Petersen graphs. We characterize the generalized Petersen graphs that are cores. A simple characterization of endomorphism-transitive generalized Petersen graphs follows. This extends the characterization of vertex-transitive generalized Petersen graphs due to Frucht, Graver, and Watkins and solves a problem of Fan and Xie. %Moreover, our result yields an infinite family of generalized Petersen graphs, none of whose orientations is the directed Cayley graph of a semigroup. 
% Further, we obtain an infite family of vertex transitive digraphs, that are not directed Cayley graphs of a semigroup, answering a recent question of the authors.

Moreover, we study generalized Petersen graphs that are (underlying graphs of) Cayley graphs of monoids. We show that this is the case for the Petersen graph, answering a recent mathoverflow question, for the Desargues graphs, and for the dodecahedron -- answering a question of Knauer and Knauer. Moreover, we characterize the infinite family of generalized Petersen graphs that are Cayley graph of a monoid with generating connection set of size two. This extends Nedela and \v{S}koviera's characterization of generalized Petersen graphs that are group Cayley graphs and complements results of Hao, Gao, and Luo.
% 
% we construct and infinite family of generalized Petersen graphs with loops that are Cayley graphs of an orthodox monoid. The latter complements results of Hao, Gao, and Luo and contributes to an open question of Knauer and Knauer.
\end{abstract}

\maketitle

\section{Introduction}\label{introduction}

Let $k,n$ be integers such that $0<k<\frac{n}{2}$. The \emph{generalized Petersen graph} is the cubic graph $G(n,k)$ on vertex set $V = V_I \cup V_O$, being $V_I = \{v_0, \ldots, v_{n-1}\}$ the set of {\it inner vertices} and $V_O = \{u_0,\ldots, u_{n-1}\}$ the set of {\it outer vertices}. The edge set is partitioned into three parts (all subscripts are considered modulo $n$): the edges $E_O(n,k) = \{u_i u_{i+1} \, \vert \, 0 \leq i \leq n-1\}$ form the {\it outer rim}, inducing a cycle of length $n$; the edges $E_I(n,k) = \{v_i v_{i+k} \, \vert \, 0 \leq i \leq n-1\}$ form the {\it inner rims}, inducing $\gcd(n,k)$ cycles of length $n/\gcd(n,k)$; and the edges $E_S(n,k) = \{u_i v_i \, \vert \, 0 \leq i \leq n-1\}$ called {\it spokes} forming a perfect matching of $G(n,k)$. Generalized Petersen graphs were introduced by Coxeter~\cite{Cox-50} and named by Mark Watkins~\cite{Watkins:69}.
Many known cubic graphs belong to this class, e.g., the \emph{Petersen graph} $G(5,2)$ itself, the \emph{D\"urer graph} $G(6,2)$, the \emph{M\"obius-Kantor graph} $G(8,3)$ (Figure~\ref{fig:G83}), the \emph{dodecahedron} $G(10,2)$, the \emph{Desargues graph} $G(10,3)$, the \emph{Nauru graph} $G(12,5)$, and the \emph{$n$-prism} $G(n,1)$. Coxeter even wrote a paper on $G(24,5)$, see~\cite{COXETER1986579}.
Despite its simple definition, many important algebraic properties of $G(n,k)$ depend on the particular $k,n$, e.g., isomorphisms~\cite{Ste-09}, automorphism groups, edge-and vertex-transitivity~\cite{FGW:71}, being Cayley graph of a group~\cite{Ned-95,Lov-97}. 

\begin{figure}
\includegraphics[width=.4\textwidth]{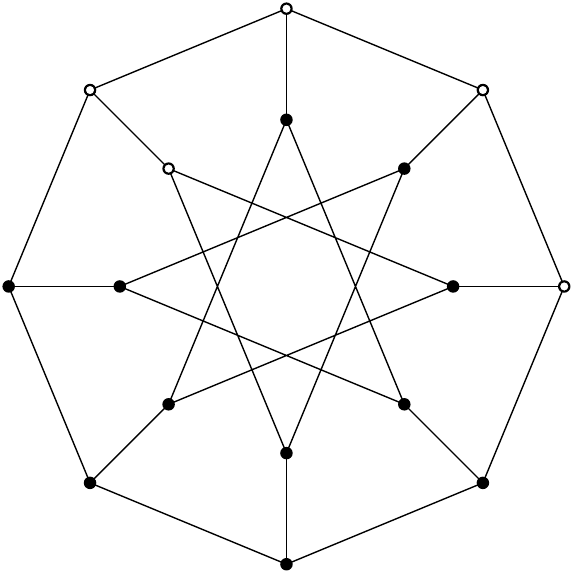}
\caption{The {M\"obius-Kantor graph} $G(8,3)$.} \label{fig:G83}
\end{figure}

In the present paper, we study what is sometimes called ``generalized symmetries'' of generalized Petersen graphs, see e.g.~\cite{Fan-09}. This is, we study endomorphisms and retracts of $G(n,k)$, as well as Cayley graphs of semigroups and monoids that are generalized Petersen graphs.
Graph homomorphisms and in particular the structure of the endomorphism monoid of a graph are classical topics of research, with several books dedicated or closely related to them, see e.g.~\cite{Kna-19,Hel-04,God-01}. This type of questions concern the first part of the present paper. In particular, we characterize cores in generalized Petersen graphs (Theorem~\ref{thm:cores}). As a corollary we obtain the characterization of endomorphism-transitive generalized Petersen graphs (Corollary~\ref{cor:endtrans}). This settles a problem of Fan and Xie~\cite{Fan-04,Fan-09}. This can be seen as an extension of the classical characterization of vertex-transitive generalized Petersen graphs of Frucht, Graver, and Watkins~\cite{FGW:71}.

The second part of the paper is dedicated to Cayley graphs of monoids and semigroups. These form a more complicated class than their group counterpart and are related to regular languages in automata theory~\cite{HAT} and have applications in Data-Mining~\cite{KRY2009}. An important theoretical feature of Cayley graphs concerns the representation theory of monoids as endomorphism monoids of graphs, see~\cite{HL69,HP64,HP65} -- an area with recent~\cite[Problem 19.2]{NOdM12} and old questions~\cite{BP80}.
In the study of Cayley graphs of semigroups two main directions can be identified. 
On the one hand properties of Cayley graphs of special classes of semigroups have been investigated, see~\cite{zbMATH05610940,Kel-06,zbMATH06864652,zbMATH06740692,zbMATH06948232,zbMATH06613956,zbMATH06184562,zbMATH06147894,zbMATH06029728,zbMATH06120589,zbMATH06093204,zbMATH05973394,zbMATH05886836,zbMATH06139402,zbMATH05701935,zbMATH05812671}. On the other hand, Cayley graphs falling into a certain class of graphs have been studied, such as acyclic, transitive digraphs~\cite{garcamarco2019cayley}, bidirected digraphs~\cite{Kel-02}, transitive digraphs~\cite{Kel-03}, and bounded outdegree digraphs~\cite{Kna-21,Zelinka1981}. 
Semigroups that admit a generating set such that the Cayley graph has given genus have been studied~\cite{Sol-06,Sol-11,Zha-08,Kna-10,Kna-16}.  
In the topological setting edge orientations, multiplicities, and loops can be ignored. This leads to simple undirected underlying graphs of Cayley graphs -- a notion that in contrast to the group setting causes a significant loss of algebraic information. Only recently graphs that are not the underlying graph of Cayley graphs of monoids have been found~\cite{Kna-21}. The main question of the second part of the paper is:
\begin{quote}
Which generalized Petersen graphs are underlying graphs of Cayley graphs?
\end{quote}

First, as a corollary of our study of cores, we show that there are infinitely many generalized Petersen graphs which cannot be the underlying graph of a loopless Cayley graph (Corollary~\ref{cor:petnotcayley}). This answers a question of~\cite[Question 6.6]{garcamarco2019cayley} and strengthens a result of~\cite{Kho-21} for monoids (Corollary~\ref{cor:manybad}).
Moving on to general Cayley graphs (with possible loops), we 
answer the recent question on \emph{mathoverflow} whether the Petersen graphs was Cayley graph of a group-like structure~\cite{390161}. We present four different ways to represent the Petersen graph as a Cayley graph (Proposition~\ref{prop:Petersen}). Furthermore, we show that the Kronecker cover of the Petersen graph -- the Desargues graph is the underlying graph of a monoid Cayley graph (Proposition~\ref{prop:Desargues}).
The planar connected Cayley graphs of groups are exactly the graphs of the Platonic and Archimedean solids except the Dodecahedron and the Icosidodecahedron~\cite{Mas-96}. This led to the question whether the latter two are underlying graphs of Cayley graphs of semigroups or monoids, see~\cite[Problem 4]{Kna-16}. We answer this question partially by providing a monoid representation of the dodecahedron (Proposition~\ref{pr:dodeca}). 
Finally, we characterize those generalized Petersen graphs that are Cayley graphs of a monoid with respect to a generating connection set of size two (Theorem~\ref{thm:mon2gen}). This extends Nedela and \v{S}koviera's~\cite{Ned-95} characterization of generalized Petersen graphs that are group Cayley graphs as well as results by Hao, Gao, and Luo~\cite{zbMATH05973394,zbMATH06093204} about generalized Petersen graph as components of Cayley graphs of symmetric inverse and Brandt semigroups.

\section{Cores and endomorphism-transitivity}
In this section we present a
characterization of the unretractive generalized Petersen graphs. As a corollary we characterize of endomorphism-transitive generalized Petersen graphs -- settling a problem of Fan and Xie~\cite{Fan-04,Fan-09}.

Unless the graph is just an edge or a vertex, bipartite graphs are not cores. Moreover, bipartite graphs without isolated vertices are endomorphism-transitive. For this reason in this section we will only consider non bipartite graphs. It is easy to check that the generalized Petersen graph $G(n,k)$ is bipartite if and only if $n$ is even and $k$ is odd.

\begin{theorem}\label{thm:cores} Let $G(n,k)$ be a non-bipartite generalized Petersen graph.  Then, the following conditions are equivalent:
\begin{itemize}
\item[(a)] $G(n,k)$ is a core,
\item[(b)] one of the minimum odd length cycles of $G(n,k)$ uses a spoke,
\item[(c)] If we denote by $d := \gcd(n,k)$
and by $a \in \Z^+$ the only integer $0 < a < n/d$ such that $ak \equiv d \ ({\rm mod}\ n)$, then one of the following properties holds: 
\begin{itemize} \item[(c.1)] $n/d$ is even, or 
\item[(c.2)] $a + d$ is even and $a \geq d + 2$, or
\item[(c.3)] $a + d$ is odd and $a + d + 2 \leq n/d$.
\end{itemize}
\end{itemize}
\end{theorem}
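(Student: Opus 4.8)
The plan is to prove the two equivalences $(b)\Leftrightarrow(c)$ and $(a)\Leftrightarrow(b)$ separately: the first is elementary number theory about cycle lengths, the second the genuinely graph-theoretic content. Throughout I would use that the rotation $\rho\colon u_i\mapsto u_{i+1},\ v_i\mapsto v_{i+1}$ is an automorphism, so every structural feature occurs in a full rotational orbit. The first preliminary step is to classify odd cycles. Since the spokes $E_S$ are the only edges joining $V_O$ to $V_I$, a cycle using no spoke lies entirely in the outer rim or in a single inner rim, and is therefore one of these rims; such a rim cycle is odd exactly when $n$ is odd (outer, length $n$) or $m:=n/d$ is odd (inner, length $m$). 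Writing $\ell_0$ for the shortest length of a rim odd cycle and $\ell_1$ for the shortest length of an odd cycle using a spoke, condition $(b)$ reads precisely $\ell_1\le\ell_0$, and one checks $\ell_0=m$ if $m$ is odd and $\ell_0=\infty$ if $m$ is even.

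For $(b)\Leftrightarrow(c)$ the plan is to locate the shortest odd cycle through a spoke. Such a cycle uses exactly two spokes (more only lengthen it) and thus splits into an outer arc and an inner arc; encoding the net outer displacement as $x$ and the signed number of inner $\pm k$ steps as $y$, closing the cycle forces $x+yk\equiv 0\pmod n$, the length is $|x|+|y|+2$, and oddness is $x+y$ odd. As $d\mid x$, I would substitute $x=dx'$, $k=dk'$ and reduce to $x'+yk'\equiv 0\pmod m$, so that the length becomes $d|x'|+|y|+2$ and one minimizes $d|x'|+|y|$ under a parity constraint. The case $m$ even gives $(c.1)$: then all rim cycles are even, so every odd cycle of the (non-bipartite) graph uses a spoke and $(b)$ holds automatically. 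When $m$ is odd, the choice $y=a$ yields $x'\equiv-1\pmod m$, hence a cycle of length $a+d+2$ whose parity is that of $a+d$: if $a+d$ is odd this is an odd cycle and the comparison $a+d+2\le m$ is exactly $(c.3)$, while if $a+d$ is even this cycle is even and a minimal parity-correcting perturbation of $(x',y)$ produces the threshold $a\ge d+2$ of $(c.2)$. The \textbf{main obstacle here} is the constrained lattice minimization: one must prove that the exhibited cycle is genuinely the shortest \emph{odd} spoke-cycle, i.e. that no smaller admissible $(x',y)$ beats it, and the factor $d$ multiplying $|x'|$ is what prevents small-$|y|$ solutions from interfering.

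For $(b)\Rightarrow(a)$ the starting point is standard: any endomorphism $\phi$ sends a shortest odd cycle to a closed odd walk of the same length $g$, which must therefore be a $g$-cycle traversed injectively, so $\phi$ is injective on every shortest odd cycle and maps the family $\mathcal S$ of shortest odd cycles into itself. Naively one would hope that every path of length two lies on some member of $\mathcal S$, forcing local injectivity; but this fails (in $G(7,2)$ the inner cherry $v_0v_2v_4$ lies on no $5$-cycle), so a purely local argument is insufficient. Instead the plan is an incidence/counting argument: using $(b)$ (so that spokes, and by $\rho$ all edges, are covered by $\mathcal S$) one analyses how many members of $\mathcal S$ pass through each edge and how these cycles overlap, and deduces that $\phi$ must respect the partition of $E$ into outer edges, inner edges and spokes. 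Once edge-types are preserved, $\phi$ maps the outer rim ($C_n$) to the outer rim and the spoke-matching to itself, so $\phi$ is a rim- and spoke-preserving map, which one identifies with a rotation/reflection, hence an automorphism. Conversely, for $\neg(b)\Rightarrow\neg(a)$, all shortest odd cycles are rims, and the plan is to exhibit an explicit proper endomorphism by folding $G$ onto the subgraph carried by the odd-girth rims, the strict gap $\ell_1>g$ guaranteeing that the fold collapses no spoke and creates no shorter odd cycle.

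I expect the two halves of $(a)\Leftrightarrow(b)$ to carry the real difficulty. In $(b)\Rightarrow(a)$ it is making the incidence argument uniform across all $(n,k)$ satisfying $(b)$, since the edge-type invariant must be read off from the (case-dependent) shapes of the shortest spoke-cycles found in the proof of $(b)\Leftrightarrow(c)$. In $\neg(b)\Rightarrow\neg(a)$ it is producing the folding map: a retraction onto a single odd-girth cycle need not exist (for instance the Petersen graph admits no homomorphism to $C_5$), so the construction must exploit either the presence of several parallel odd inner rims (when $d\ge 2$, e.g. the triangles of $G(9,3)$) or the slack $\ell_1>g$ (when $d=1$) rather than naively projecting onto one cycle, and each regime needs its own explicit map together with a verification that it is a homomorphism.
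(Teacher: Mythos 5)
Your proposal leaves its two self-identified ``obstacles'' unresolved, and the route you sketch for $(b)\Rightarrow(a)$ rests on a claim that is provably false. You want an incidence/counting argument to show that every endomorphism $\phi$ respects the partition of the edges into outer edges, inner edges and spokes, and then to identify $\phi$ with a rotation/reflection. But already for the Petersen graph $G(5,2)$ -- a core satisfying $(b)$ -- the inside-out map $\gamma(u_i)=v_{2i}$, $\gamma(v_i)=u_{2i}$ is an automorphism (since $2^2\equiv -1 \ ({\rm mod}\ 5)$) that sends outer edges to inner edges; the same happens for every $G(n,k)$ with $k^2\equiv\pm 1\ ({\rm mod}\ n)$, and the dodecahedron $G(10,2)$ is even edge-transitive. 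Since automorphisms are in particular endomorphisms, no invariant whatsoever can force endomorphisms to preserve edge types, and a ``proof'' that they do would also show ${\rm Aut}(G(5,2))=D_5$, which is false. (A secondary problem: the number of minimum odd cycles through an edge is not monotone under $\phi$, because distinct cycles through $e$ may have the same image through $\phi(e)$, so even the counting invariant itself is not obviously usable.) The paper's proof of this implication (Proposition~\ref{pr:bimpliesa}) is structured entirely differently: it first rules out any homomorphism onto a $g$-cycle by chasing the $8$-cycle $(u_0,u_1,v_1,v_{k+1},u_{k+1},u_k,v_k,v_0)$, every three consecutive vertices of which lie on some $g$-cycle, forcing $g$ to divide $8$; it then studies the core $X$ as a retract, uses Proposition~\ref{prop:cuore} to conclude that inner and outer degrees are constant on $X$, and eliminates the case $d_I=3$, $d_O=2$ by analyzing the number $t$ of inner cycles of $X$ (Lemma~\ref{lem:genprismnocore} for $t$ even, bipartiteness of subgraphs with at most two spokes together with Lemma~\ref{lem:oddgirthinteriorcycles} for odd $t\geq 3$, and an explicit argument for $t=1$).

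Your $(b)\Leftrightarrow(c)$ is likewise incomplete exactly where you say it is: the constrained minimization showing that under $\neg(c)$ no odd spoke-cycle of length at most $n/d$ exists is never carried out, and the ``folding'' endomorphism for $\neg(b)\Rightarrow\neg(a)$ is never constructed. Compare with the paper's architecture, which avoids both issues: it proves the cycle of implications $(c)\Rightarrow(b)\Rightarrow(a)\Rightarrow(c)$, where $(c)\Rightarrow(b)$ consists of the same explicit short cycles you exhibit, and $(a)\Rightarrow(c)$ is proved contrapositively by writing down an explicit retraction (formula~(\ref{eq:defretracto})) of $G(n,k)$ onto a \emph{single} inner rim, the cycle through $v_0$ (the case ``$a+d$ odd'' being reduced to ``$a+d$ even'' by replacing $k$ with $n-k$). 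In particular the hard direction $\neg(c)\Rightarrow\neg(b)$ is obtained for free as $\neg(c)\Rightarrow\neg(a)\Rightarrow\neg(b)$, so your lattice minimization becomes a corollary of the theorem rather than an ingredient. This also corrects your expectation that one cannot simply project onto one odd-girth cycle: under $\neg(c)$ one can, and the paper does, so the separate regimes $d\geq 2$ and $d=1$ you anticipate need no distinction (for $d=1$, $\neg(c)$ forces $k=1$, the prism). As it stands, neither of your two equivalences is proved, and the first one cannot be proved along the lines you describe.
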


As a consequence we derive the following characterization of endomorphism transitive generalized Petersen graphs.

\begin{corollary}\label{cor:endtrans}
 The endomorphism transitive generalized Petersen graphs are exactly the transitive and the bipartite generalized Petersen graphs.
\end{corollary}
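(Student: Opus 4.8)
The plan is to prove Corollary~\ref{cor:endtrans} by combining Theorem~\ref{thm:cores} with the general theory relating endomorphism-transitivity to cores and retracts. Recall that a graph is \emph{endomorphism-transitive} if its endomorphism monoid acts transitively on vertices. The key structural fact I would invoke is that every graph $G$ retracts onto a unique (up to isomorphism) induced subgraph that is a core, its \emph{core} $C(G)$, and that the endomorphisms of $G$ factor through this core. In particular, if $G$ is endomorphism-transitive then all vertices are equivalent under endomorphisms, which forces $C(G)$ to be vertex-transitive; moreover, since automorphisms are endomorphisms, endomorphism-transitivity is implied by vertex-transitivity. The strategy is thus to show that for a generalized Petersen graph $G(n,k)$, endomorphism-transitivity holds precisely when $G(n,k)$ is either vertex-transitive or bipartite.

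First I would dispose of the bipartite case. As noted in the text preceding Theorem~\ref{thm:cores}, a bipartite graph without isolated vertices is endomorphism-transitive (one can map everything onto a single edge and then use edge-transitivity-type arguments, or directly exhibit endomorphisms realizing the transitivity); since $G(n,k)$ is connected and $3$-regular it certainly has no isolated vertices, so every bipartite $G(n,k)$ is endomorphism-transitive. Likewise, every vertex-transitive graph is trivially endomorphism-transitive because automorphisms already act transitively on vertices. This establishes the ``if'' direction: both transitive and bipartite generalized Petersen graphs are endomorphism-transitive.

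For the ``only if'' direction I would assume $G(n,k)$ is endomorphism-transitive and non-bipartite, and aim to conclude it is vertex-transitive. Since $G(n,k)$ is non-bipartite, I would first argue that endomorphism-transitivity forces $G(n,k)$ to be a core. The mechanism: if $G$ is not a core, its core $C(G)$ is a proper retract; for a non-bipartite graph the core has odd girth equal to that of $G$ and is strictly smaller, and I would show that the $20$ vertices of a non-bipartite $G(n,k)$ cannot all lie in the image of every endomorphism unless $G(n,k)=C(G)$. Concretely, endomorphism-transitivity together with a proper retraction would produce a vertex not in the image of some endomorphism, contradicting transitivity after composing with the idempotent retraction; this is the standard fact that an endomorphism-transitive graph must be a core (since the retraction to $C(G)$ composed with the inclusion gives an endomorphism whose image misses vertices outside $C(G)$, and transitivity would then force those vertices into the image). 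Hence $G(n,k)$ is a core. Then by Theorem~\ref{thm:cores} it is one of the core generalized Petersen graphs, and a core that is endomorphism-transitive has all its endomorphisms equal to automorphisms acting transitively, i.e.\ it is vertex-transitive. Finally I would quote the Frucht--Graver--Watkins classification to identify these as exactly the transitive members of the family, closing the loop.

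The main obstacle I expect is the careful verification that a non-bipartite endomorphism-transitive $G(n,k)$ must actually be a core rather than merely having a vertex-transitive core. The subtlety is that endomorphism-transitivity of $G$ does not a priori prevent $G$ from having a proper retract; one must show that a proper retraction is incompatible with transitivity of the full endomorphism monoid. The clean argument is that if $r\colon G\to G$ is a retraction onto a proper core $C\subsetneq G$, pick $w\notin V(C)$; every endomorphism $\varphi$ of $G$ satisfies $r\circ\varphi$ has image in $C$, and since $r$ fixes $C$ pointwise while $G$ is connected and non-bipartite so $C$ carries the odd cycles, one deduces the image of any endomorphism already lies in $C$, so no endomorphism maps any vertex of $C$ to $w$ --- contradicting transitivity. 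Making this last deduction rigorous (that \emph{every} endomorphism of a non-bipartite graph has image contained in a copy of the core, and that the omitted vertex can never be hit) is where I would spend the most care, but it is a known property of cores that I can cite and specialize to the concrete structure of $G(n,k)$.
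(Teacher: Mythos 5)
Your ``if'' direction is fine, but the ``only if'' direction rests on a claim that is false: that a non-bipartite endomorphism-transitive graph must be a core. This is not a standard fact, and it fails inside the very family under discussion: for odd $n$ the prism $G(n,1)$ is non-bipartite and vertex-transitive (hence endomorphism-transitive), yet it is not a core --- it retracts onto one of its odd $n$-cycles (see Lemma~\ref{lem:genprismnocore} or Corollary~\ref{cor:primos}). The flaw in your supporting argument is the deduction that ``the image of any endomorphism already lies in $C$'': this is simply untrue (the identity and every automorphism are endomorphisms whose image is all of $G$). Uniqueness of the core is uniqueness \emph{up to isomorphism}; a graph may contain many copies of its core, and endomorphism-transitivity only requires that for each pair $(u,w)$ \emph{some} endomorphism sends $u$ to $w$. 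The existence of a proper retraction $r$ onto one fixed copy $C$ creates no conflict with this, because nothing forces the endomorphism realizing $(u,w)$ with $w\notin V(C)$ to factor through $r$. So composing with the idempotent retraction yields no contradiction, and your reduction of the whole problem to Theorem~\ref{thm:cores} collapses exactly in the non-core case.

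That non-core case is where the paper's proof does its actual work, and it cannot be skipped: assuming $G(n,k)$ is non-bipartite, endomorphism-transitive and \emph{not} a core, Theorem~\ref{thm:cores}(b) gives that no shortest odd cycle uses a spoke, whence the odd girth is $g=n/\gcd(n,k)$ and the inner cycle through $v_0$ is an odd cycle of length $g$. An endomorphism $h$ with $h(v_0)=u_0$ maps this cycle onto an odd closed walk of length $g$ through $u_0$, which must therefore be an odd cycle of length $g$ avoiding spokes, i.e.\ the outer rim; this forces $\gcd(n,k)=1$ and $g=n$. Then the failure of condition (c) of Theorem~\ref{thm:cores} (with $d=1$) forces $v_{n-1}v_0$ to be an edge, i.e.\ $k=1$, so the graph is an odd prism and in particular transitive. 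In short: the correct proof classifies the non-bipartite, endomorphism-transitive non-cores (they are precisely the odd prisms), rather than proving they do not exist, which is what your argument attempts and what the odd prisms refute.
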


Before proving the main results of this section, we begin by summarizing the main results that we need about cores (see, e.g., ~\cite[Section 6.2]{God-01} for a proof of these statements) and about the automorphism group of generalized Petersen graphs. A graph $G$ is a \emph{core} (or \emph{unretractive}) if all its endomorphisms are automorphisms. A subgraph $X$ of $G$ is {\it a core of $G$} if $X$ is a core itself and there is a homomorphism from $G$ to $X$.  A {\it retraction} is a homomorphism $f$ from $G$ to a subgraph $X$ such that $f$ restricted to $V(X)$ is the identity map. When there is a retraction from $G$ to $X$ we say that $X$ is a {\it retract} of $G$. 
Clearly retracts have to be induced subgraphs and retracts of connected graphs have to be connected too. We call the \emph{odd girth} of a non-bipartite $G$ the length of a shortest odd cycle. Since homomorphic images of odd cycles are odd cycles, if $G$ is non bipartite with odd girth $g$ and $X$ is a retract of $G$, then there is a cycle of length $g$ in $X$.

\begin{proposition}\label{prop:cuore} Every graph $G$ has an (up to isomorphism) unique core $X$. Moreover, $X$ is a retract of $G$. As a consequence, if $X$ is a core of $G$ and there exists $\varphi \in {\rm Aut}(G)$ such that $\varphi(x) = y$ for some $x,y \in V(X)$, then there exists $\phi \in {\rm Aut}(X)$ such that $\phi(x) = y$.% and, in particular, ${\rm deg}_X(x) = {\rm deg}_X(y)$.
\end{proposition}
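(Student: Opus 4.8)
The plan is to run the standard minimum-image argument, working throughout with finite graphs (which suffices, since every $G(n,k)$ is finite). First I would choose an endomorphism $f\colon G\to G$ whose image subgraph $X$, defined as the subgraph of $G$ induced on $f(V(G))$, has the fewest possible vertices. The first claim is that $X$ is a core. Indeed, if some endomorphism $g\colon X\to X$ failed to be surjective, then $g\circ f$ would be an endomorphism of $G$ with a strictly smaller image, contradicting minimality; hence every endomorphism of $X$ is surjective. For a finite graph a surjective endomorphism is bijective on vertices, and a routine counting argument (an injective vertex map sending the $|E(X)|$ edges into the $|E(X)|$-element edge set is forced to be an edge-bijection, hence reflects edges) upgrades this to an automorphism. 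Thus $X$ is a core.

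Next I would show that $X$ is a retract. Since $f(V(X))\subseteq f(V(G))=V(X)$, the restriction $f|_X$ is an endomorphism of the core $X$, hence an automorphism; setting $r:=(f|_X)^{-1}\circ f\colon G\to X$ then yields a homomorphism with $r|_X=\mathrm{id}_X$, i.e.\ a retraction. The same device shows that \emph{every} subgraph that is a core of $G$ is automatically a retract: given a homomorphism $h\colon G\to X$ onto a core subgraph $X$, the composite $h\circ\iota$ with the inclusion $\iota\colon X\hookrightarrow G$ is an automorphism $\alpha$ of $X$, so $\alpha^{-1}\circ h$ is a retraction onto $X$. For uniqueness I would take two cores $X_1,X_2$ of $G$; combining their retractions $r_1,r_2$ produces homomorphisms $r_2|_{X_1}\colon X_1\to X_2$ and $r_1|_{X_2}\colon X_2\to X_1$ whose two composites are endomorphisms of the cores $X_1$ and $X_2$, hence automorphisms. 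This forces $r_2|_{X_1}$ to be both injective and surjective, so a bijective homomorphism, and hence (again by the finite-graph argument) an isomorphism $X_1\cong X_2$.

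Finally, for the stated consequence, let $X$ be a core of $G$, which by the previous paragraph is a retract; fix a retraction $r\colon G\to X$ and an automorphism $\varphi\in\Aut(G)$ with $\varphi(x)=y$ for some $x,y\in V(X)$. I would set $\phi:=r\circ(\varphi|_X)\colon X\to X$, which is an endomorphism of $X$ and therefore an automorphism, since $X$ is a core. Because $r$ fixes $V(X)$ pointwise and $y\in V(X)$, we get $\phi(x)=r(\varphi(x))=r(y)=y$, as required.

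The only genuinely delicate point is the repeatedly invoked fact that a surjective (equivalently bijective) endomorphism of a finite graph is an automorphism, which relies on finiteness to pass from preserving edges to reflecting them; everything else is bookkeeping with compositions. I expect no real obstacle beyond ensuring that the finiteness hypothesis is called upon at each step where surjectivity is promoted to bijectivity and edge-preservation to edge-reflection.
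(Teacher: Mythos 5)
Your proof is correct and follows essentially the same route as the paper's source: the paper does not prove this proposition itself but cites the standard treatment (Godsil--Royle, Section 6.2), and your minimal-image existence argument, the retraction $(f|_X)^{-1}\circ f$, and the composition tricks for uniqueness and for transporting $\varphi\in\mathrm{Aut}(G)$ to an automorphism of $X$ are exactly that standard proof. Restricting to finite graphs is consistent with how the proposition is used here (all $G(n,k)$ are finite), and the one glossed step -- that the bijective homomorphism $r_2|_{X_1}$ is an isomorphism -- closes routinely because $r_1|_{X_2}$ is bijective as well, giving $|E(X_1)|=|E(X_2)|$ so that your edge-counting argument yields edge-reflection.
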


The automorphism group of $G(n,k)$ depends on the values of $n,k$ and was completely described by Frucht, Graver and Watkins in~\cite{FGW:71}. Let $V$ be the vertex set of $G(n,k)$, and consider $\alpha,\beta,\gamma: V \longrightarrow V$ defined as 
\begin{eqnarray}
\label{eq:alpha} \alpha(u_i) = u_{i+1}, &  \alpha(v_i) = v_{i+1}, &\hspace{1cm} \text{(rotation),} \\ 
\label{eq:beta} \beta(u_i) = u_{-i}, & \beta(v_i) = v_{-i},&\hspace{1cm} \text{(reflection),}  \\
\label{eq:gamma} \gamma(u_i) = v_{ki}, & \gamma(v_i) = u_{ki},&\hspace{1cm} \text{(inside-out).}
\end{eqnarray}  Then $\alpha, \beta$ are always automorphisms of $G(n,k)$ and, in particular, ${\rm Aut}(G(n,k))$ has a dihedral subgroup $D_n$ or order $2n$. As a consequence, we have that $G(n,k)$ can be either transitive or have two orbits, $V_I$ and $V_O$. Moreover, $\gamma$ is automorphism if and only if $k^2 \equiv \pm 1 \ ({\rm mod}\ n)$. Except in seven exceptional cases (which are described in detail in~\cite{FGW:71}), the group ${\rm Aut}(G(n,k))$ can be described with $\alpha, \beta$ and $\gamma$.
 
\begin{theorem}\label{thm:autgroup}~\cite{FGW:71} If $(n,k)$ is not one of $(4,1), (5,2), (8,3), (10,2), (10,3), (12,5)$ or $(24,5)$, then the following hold:
\begin{itemize}
\item if $k^2 \equiv 1 \ ({\rm mod}\ n)$; then \[ {\rm Aut}(G(n,k)) = \langle \alpha, \beta, \gamma \, \vert \, \alpha^n = \beta^2 = \gamma^2 = {\rm id},\, \beta \alpha \beta = \alpha^{-1},\, \gamma \beta = \beta \gamma,\, \gamma \alpha \gamma = \alpha^k \rangle, \]
\item if $k^2 \equiv -1 \ ({\rm mod}\ n)$; then $ {\rm Aut}(G(n,k)) = \langle \alpha,  \gamma \, \vert \, \alpha^n =  \gamma^4 = {\rm id},\, \gamma \alpha \gamma^{-1} = \alpha^k \rangle,$
\item if $k^2 \not\equiv \pm 1 \ ({\rm mod}\ n)$; then $ {\rm Aut}(G(n,k)) = \langle \alpha,  \beta \, \vert \, \alpha^n =  \beta^2 = {\rm id},\, \beta \alpha \beta  = \alpha^{-1}\rangle.$
\end{itemize}
\end{theorem}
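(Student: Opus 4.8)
The plan is to prove the nontrivial inclusion, namely that outside the seven sporadic pairs there are \emph{no} automorphisms beyond those generated by $\alpha,\beta,\gamma$. The ``easy'' direction is a direct verification on the index set $\Z_n$: $\alpha,\beta$ are always automorphisms, $\gamma$ is one precisely when $k^2\equiv\pm1\ (\mathrm{mod}\ n)$ (as recalled in the excerpt), and $\langle\alpha,\beta\rangle\cong D_n$ has order $2n$. Since $\gamma$ swaps $V_I$ and $V_O$ it lies outside $D_n$, so adjoining it yields a group of order $4n$. One computes $\gamma^2(u_i)=u_{k^2i}$, whence $\gamma^2=\mathrm{id}$ when $k^2\equiv1$ and $\gamma^2=\beta$ when $k^2\equiv-1$, and $\gamma\alpha\gamma^{-1}=\alpha^k$ in both cases; these relations together with the order count certify that the three displayed presentations are complete, not merely valid.

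The heart of the argument is a \emph{rigidity lemma}: for $(n,k)$ not in the exceptional list, every automorphism of $G(n,k)$ maps the spoke matching $E_S(n,k)$ to itself. I would prove this by exhibiting a local combinatorial invariant of edges that every automorphism must preserve and that separates spokes from rim edges -- for instance, the number of short cycles (girth cycles, and cycles one or two longer) through a given edge. A spoke $u_iv_i$ and a rim edge $u_iu_{i+1}$ sit in different short-cycle patterns built from the rim/spoke/inner-rim incidence structure, and for all but finitely many $(n,k)$ these counts differ. The delicate point, and the main obstacle, is exactly that for small $n$ with special arithmetic relations between $n$ and $k$ the counts coincide, producing exotic automorphisms; isolating precisely the pairs $(4,1),(5,2),(8,3),(10,2),(10,3),(12,5),(24,5)$ where this occurs requires a finite but careful case analysis, and those genuinely exceptional graphs must then be treated by hand, their automorphism groups being larger or otherwise atypical.

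Granting the rigidity lemma, an automorphism $f$ permutes the spokes and hence induces a bijection on the index set. Inspecting the two non-spoke edges at each vertex, $f$ either sends every outer vertex to an outer vertex or sends every outer vertex to an inner vertex: in the first case the outer rim (an $n$-cycle) maps to the outer rim, and in the second it maps to the inner rim, which forces the inner rim to be a single $n$-cycle, i.e.\ $\gcd(n,k)=1$. In the bipartition-preserving case, writing $f(u_i)=u_{\sigma(i)}$ and $f(v_i)=v_{\sigma(i)}$, the requirement that $\sigma$ preserve outer adjacency $i\sim i+1$ forces $\sigma(i)=\varepsilon i+c$ with $\varepsilon\in\{\pm1\}$; such a $\sigma$ automatically preserves inner adjacency $i\sim i+k$, so $f\in\langle\alpha,\beta\rangle=D_n$.

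Finally I would treat the bipartition-swapping automorphisms. Such an $f$ carries outer adjacency to inner adjacency and vice versa, so on indices it takes the form $i\mapsto\pm k\,i+c$ on the outer side and $i\mapsto\pm k^{-1}i+c'$ on the inner side; compatibility with the spoke incidence forces these two linear maps to agree, which is possible exactly when $k\cdot k\equiv\pm1\ (\mathrm{mod}\ n)$, and then every such $f$ lies in the coset $\gamma D_n$. Note that $k^2\equiv\pm1$ already forces $\gcd(n,k)=1$, consistent with a swap being possible only when the inner rim is a single cycle. Combining the cases: when $k^2\not\equiv\pm1$ no swap occurs and $\Aut(G(n,k))=D_n$, giving the third presentation; when $k^2\equiv\pm1$ we obtain $\Aut(G(n,k))=D_n\cup\gamma D_n$, and substituting the computed value of $\gamma^2$ ($\mathrm{id}$ or $\beta$) reduces the generating set to $\{\alpha,\beta,\gamma\}$ or to $\{\alpha,\gamma\}$ respectively, yielding the first two presentations.
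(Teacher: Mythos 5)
First, note that the paper does not prove this statement at all: Theorem~\ref{thm:autgroup} is quoted verbatim from Frucht, Graver, and Watkins~\cite{FGW:71}, so the comparison here is against the original literature proof rather than an argument in the paper. Your outline in fact reconstructs the architecture of the FGW proof. Your ``easy direction'' computations are all correct: $\gamma^2(u_i)=u_{k^2i}$, so $\gamma^2=\mathrm{id}$ when $k^2\equiv 1$ and $\gamma^2=\beta$ when $k^2\equiv-1$; $\gamma\alpha\gamma^{-1}=\alpha^k$; $\gamma\beta=\beta\gamma$ in the first case; and the order count $|D_n\cup\gamma D_n|=4n$ does certify the presentations, since each presented group has order at most $4n$ (respectively $2n$). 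Likewise, the deduction from spoke-rigidity to the conclusion is complete and correct: the non-spoke edges form a $2$-regular subgraph whose components are the outer $n$-cycle and $\gcd(n,k)$ inner cycles, so an automorphism preserving spokes either fixes the bipartition $V_O\cup V_I$ (forcing $\sigma(i)=\pm i+c$, hence $f\in\langle\alpha,\beta\rangle$) or swaps it (forcing $\gcd(n,k)=1$, index maps $i\mapsto\pm ki+c$ and $i\mapsto\pm k^{-1}i+c'$, and compatibility along the spokes forcing $k^2\equiv\pm1\ (\mathrm{mod}\ n)$ and $f\in\gamma D_n$).

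The genuine gap is that the rigidity lemma -- the one nontrivial ingredient -- is asserted rather than proved. Everything difficult in~\cite{FGW:71} lives precisely there: they establish spoke-invariance by counting the $8$-cycles through each edge and carrying out a delicate arithmetic analysis of when the counts for spokes and rim edges coincide, and isolating $(4,1),(5,2),(8,3),(10,2),(10,3),(12,5),(24,5)$ as the complete list of coincidences is the bulk of their paper. Your proposal names a plausible invariant (``short-cycle counts through an edge'') and candidly defers the verification to ``a finite but careful case analysis,'' but gives no argument that the counts actually differ for all non-exceptional $(n,k)$, nor that the failures are exactly the seven listed pairs -- note that exceptions like $(24,5)$ arise from subtle arithmetic coincidences, not merely from smallness of $n$, so ``all but finitely many'' is not something one gets for free from a generic counting argument. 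As it stands, the proposal is a correct high-level plan whose crucial step is a placeholder; completing it would essentially amount to redoing the FGW cycle-counting analysis.
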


As a consequence of this, in~\cite{FGW:71}, they also get the following.

\begin{corollary}\label{cor:autdi} The following are equivalent:
\begin{itemize} \item[(a)] $G(n,k)$ is transitive,
\item[(b)] $k^2 \equiv \pm 1 \ ({\rm mod}\ n)$ or $(n,k) = (10,2)$ (dodecahedron), 
\item[(c)] the automorphism group of $G(n,k)$ is different from the dihedral group $D_n$. 
\end{itemize}
\end{corollary}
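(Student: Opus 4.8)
The plan is to deduce everything from the explicit description of $\Aut(G(n,k))$ in Theorem~\ref{thm:autgroup}, together with the single structural observation that the two rims are almost canonical. Recall that $\alpha,\beta$ always lie in $\Aut(G(n,k))$ and generate a dihedral group $D_n$ of order $2n$ whose orbits are exactly $V_O$ and $V_I$, since $\alpha$ already acts transitively on each rim. Hence $G(n,k)$ is vertex-transitive if and only if some automorphism sends an outer vertex to an inner one, i.e. fails to preserve the partition $\{V_O,V_I\}$ setwise. I would prove $(a)\Leftrightarrow(c)$ through this reduction, the key lemma being that every automorphism preserving $V_O$ already lies in $D_n$. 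Indeed, such a $\sigma$ restricts to an automorphism of the induced outer cycle $C_n$, hence agrees on $V_O$ with some $\tau\in D_n$; then $\tau^{-1}\sigma$ fixes $V_O$ pointwise and therefore fixes every $v_i$, the unique inner neighbour of $u_i$ along its spoke, so $\sigma\in D_n$. Consequently, if $\Aut(G(n,k))=D_n$ the partition is preserved and $G(n,k)$ has the two orbits $V_O,V_I$, so it is not transitive; and if $\Aut(G(n,k))\neq D_n$ (recall $D_n\leq\Aut(G(n,k))$ always) there is $\sigma\notin D_n$, which by the lemma cannot preserve $V_O$ and thus maps some outer vertex into $V_I$, making $G(n,k)$ transitive.

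It remains to prove $(b)\Leftrightarrow(c)$. The map $\gamma$ with $\gamma(u_i)=v_{ki}$ exchanges the rims and is an automorphism exactly when $k^2\equiv\pm1\ ({\rm mod}\ n)$, as recalled before Theorem~\ref{thm:autgroup}; so whenever $k^2\equiv\pm1$ we have $\gamma\in\Aut(G(n,k))\setminus D_n$, whence $(c)$ holds, and by the reduction of the previous paragraph $G(n,k)$ is already transitive (this re-proves $(b)\Rightarrow(a)$ concretely). For the converse I would argue contrapositively, showing $\neg(b)\Rightarrow\neg(c)$: if $k^2\not\equiv\pm1\ ({\rm mod}\ n)$ and $(n,k)\neq(10,2)$ then $\Aut(G(n,k))=D_n$. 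When $(n,k)$ is not one of the seven exceptional pairs this is exactly the third case of Theorem~\ref{thm:autgroup}. The point is therefore to dispose of the exceptional pairs $(4,1),(5,2),(8,3),(10,2),(10,3),(12,5),(24,5)$: a direct computation of $k^2 \bmod n$ yields $1,-1,1,4,-1,1,1$ respectively, so every exceptional pair satisfies $k^2\equiv\pm1$ except $(10,2)$, the dodecahedron singled out in $(b)$. Hence no pair satisfying $\neg(b)$ is exceptional, the third case of Theorem~\ref{thm:autgroup} applies, and $\Aut(G(n,k))=D_n$. Together with $(a)\Leftrightarrow(c)$ this closes all equivalences; for the remaining instance $(n,k)=(10,2)$ of $(b)\Rightarrow(a)$ one simply invokes the well-known vertex-transitivity of the dodecahedron.

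I expect the only genuinely delicate points to be the two ingredients that the generic argument does not see on its own: the lemma that the rim-preserving automorphisms are precisely $D_n$ (which rests on $v_i$ being the unique neighbour of $u_i$ in $V_I$, and so must be checked not to fail for small $n$), and the treatment of the dodecahedron $(10,2)$, the unique transitive generalized Petersen graph for which $\gamma$ is not an automorphism and whose extra symmetry must be supplied by hand rather than by $\langle\alpha,\beta,\gamma\rangle$. Everything else reduces to reading off the three cases of Theorem~\ref{thm:autgroup} and the elementary orbit bookkeeping for the $D_n$-action on $V_O\cup V_I$.
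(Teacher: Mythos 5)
Your proof is correct, and it follows essentially the route the paper intends: the paper states this corollary without proof, as a consequence of Theorem~\ref{thm:autgroup} credited to Frucht, Graver, and Watkins, and your argument is exactly that deduction carried out in full. The three ingredients you supply beyond Theorem~\ref{thm:autgroup} are all sound -- the lemma that rim-preserving automorphisms are precisely $D_n$ (restriction to the induced outer $n$-cycle plus uniqueness of the spoke neighbour), the arithmetic check that among the seven exceptional pairs only $(10,2)$ fails $k^2 \equiv \pm 1 \ ({\rm mod}\ n)$, and the classical vertex-transitivity of the dodecahedron, which must indeed be imported from outside since Theorem~\ref{thm:autgroup} excludes that case.
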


We will also use the following auxiliary results to prove that (b) implies (a) in Theorem~\ref{thm:cores}.

\begin{lemma}\label{lem:genprismnocore}
Let $G$ be the graph consisting of two vertex disjoint cycles of the same length $C_1 = (x_0,\ldots,x_{\ell-1})$, $C_2 = (y_0,\ldots,y_{\ell-1})$ and $\ell$ disjoint paths of the same length $P_0,\ldots, P_{\ell-1}$, where $P_i$ joins $x_i$ with $y_i$ for all $i \in \{0,\ldots,\ell-1\}$. Then, $G$ is not a core.
\end{lemma}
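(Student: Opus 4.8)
The plan is to exhibit a single endomorphism of $G$ that fails to be injective, which by definition shows that $G$ is not a core. The natural candidate is a \emph{folding} that retracts the whole graph onto the cycle $C_1$: keep $C_1$ fixed and slide each path $P_i$, together with its endpoint $y_i$, forward along $C_1$.

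Concretely, let $m \geq 1$ denote the common length of the paths, and write the vertices of $P_i$ as $x_i = p_{i,0}, p_{i,1}, \ldots, p_{i,m} = y_i$. I would define $f \colon V(G) \to V(G)$ by $f(p_{i,j}) = x_{i+j}$, with the first index read modulo $\ell$. This fixes $C_1$ pointwise and maps $C_2$ onto $C_1$ shifted by $m$, collapsing every path onto an arc of $C_1$.

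The key step is to verify that $f$ is a graph homomorphism, for which it suffices to inspect the three types of edges of $G$. An edge $x_i x_{i+1}$ of $C_1$ is fixed; a path edge $p_{i,j}p_{i,j+1}$ is sent to $x_{i+j}x_{i+j+1}$; and an edge $y_i y_{i+1}$ of $C_2$ is sent to $x_{i+m}x_{i+m+1}$. In each case the image is a pair of cyclically consecutive vertices of $C_1$, hence an edge, so $f$ is an endomorphism of $G$ whose image is exactly $C_1$. Since $|V(G)| = \ell(m+1) > \ell = |V(C_1)|$ for every $m \geq 1$, the map $f$ is not injective and therefore not an automorphism; hence $G$ is not a core.

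I expect no real obstacle here: the argument is uniform and needs no case distinction on the parity of $\ell$ or $m$ (in particular the bipartite case $\ell$ even requires no separate treatment, though there one could alternatively retract $G$ onto a single edge). The only point that genuinely uses the hypotheses is the check on the edges of $C_2$, where one needs $C_1$ and $C_2$ to have the \emph{same} length and to be indexed compatibly, so that $y_i \sim y_{i+1}$ is carried to the genuine edge $x_{i+m}\sim x_{i+m+1}$; had the two cycles been oriented oppositely, replacing $f(p_{i,j})=x_{i+j}$ by $f(p_{i,j})=x_{i-j}$ would remedy this.
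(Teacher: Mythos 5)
Your proof is correct and is essentially identical to the paper's: the paper defines the same endomorphism $\varphi(z_{i,j}) = x_{i+j \bmod \ell}$ sliding each path onto the cycle $C_1$, and concludes as you do that it is not an automorphism. Your write-up merely adds the explicit edge-by-edge verification that the paper leaves to the reader.
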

\begin{proof}Denote by $k$ the length of $P_0,\ldots, P_{\ell-1}$ and $P_i = (x_i = z_{i,0}, z_{i,1},\ldots,z_{i,k} = y_i)$ for all $i \in \{0,\ldots,\ell-1\}$. Now, we define $\varphi \in {\rm End}(G)$ as $\varphi(z_{i,j}) = x_{i+j\ {\rm mod} \ \ell}\ $ for all $i \in \{0,\ldots,\ell-1\}$. We have that $\varphi$ is not an automorphism and, thus, $G$ is not a core. 
\end{proof}

\begin{lemma}\label{lem:oddgirthinteriorcycles}
Let $G(n,k)$ be a non bipartite generalized Petersen graph of odd girth $g$. Then, every odd cycle of length $g$ has at most two spokes. 
\end{lemma}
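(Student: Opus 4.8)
The plan is to argue by contradiction using minimality of the odd girth. First I set up the coarse structure of $C$, a shortest odd cycle of length $g$. Two-colour the vertices of $C$ according to whether they lie in $V_O$ or in $V_I$: the outer- and inner-rim edges preserve this colour while the spokes switch it, so going once around the closed walk $C$ the number of colour switches is even. Hence $C$ uses an even number $2s$ of spokes and decomposes into $2s$ arcs that alternate between the outer rim and the inner rim, each arc being a genuine path (length $\ge 1$, since distinct spokes cannot share an endpoint). Consequently $g=2s+\sum(\text{arc lengths})\ge 4s$; in particular if $g<8$ then $s\le 1$ and we are done, so I may assume $s\ge 2$ and seek a contradiction.

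The engine of the argument is the following parity-refined geodesic property of a shortest odd cycle, which I would record first. For any two vertices $x,y$ of $C$ the two arcs of $C$ between them have lengths $\ell_e$ and $\ell_o$ with $\ell_e+\ell_o=g$ odd, hence one is even and one is odd; and for every $x$--$y$ path $P$ in $G(n,k)$ of length $p$ one has $p\ge \ell_e$ if $p$ is even and $p\ge \ell_o$ if $p$ is odd. Indeed, gluing $P$ to the arc of the opposite parity yields an odd closed walk of length $\ell_{\mathrm{opp}}+p$; were this smaller than $g$ it would contain an odd cycle shorter than $g$, contradicting the choice of $C$. Thus producing any $x$--$y$ path whose length has the parity of one arc but is strictly smaller than that arc is exactly what furnishes the sought contradiction.

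The main step applies this to a shortest arc, which (as $s\ge 2$) I may take to be an inner arc $I$ of length $q$ with spoke-feet $a,b$. The subpath $A_{\mathrm{in}}=u_a v_a\,I\,v_b u_b$ has length $q+2$, while the complementary arc $A_{\mathrm{out}}$ has length $g-q-2$ and still carries the remaining $2s-2\ge 2$ spokes; these two arcs have opposite parities. Because inner-rim steps change the index by $\pm k$, the feet satisfy $b\equiv a\pm qk\pmod n$, so the outer rim supplies two $u_a$--$u_b$ paths, of lengths $t$ and $n-t$ with $t\equiv \pm qk\pmod n$. The goal is to locate one of these outer paths whose length has the parity of $A_{\mathrm{out}}$ and is strictly below $g-q-2$: such a path contradicts the geodesic property above, and hence $s\le 1$. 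When the numerics of a single inner arc do not cooperate I would switch the roles, running the symmetric exchange on a shortest outer arc (detouring through the inner rim) or using the complementary inner arc of length $m-q$ on its inner cycle, exploiting that with $s\ge 2$ there are at least two inner and two outer arcs to provide enough slack.

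The hard part will be precisely this bookkeeping of length against parity. The asymmetry that the outer rim advances the index by $1$ per edge whereas the inner rim advances it by $k$ means an inner arc of length $q$ corresponds to an outer detour of length only congruent to $\pm qk\pmod n$, whose actual size and parity need not be favourable; moreover the two outer detour lengths $t$ and $n-t$ share a parity exactly when $n$ is even. I therefore expect the real work to be a case analysis according to the parities of $n$ and $k$ and the orientations ($\pm$) and parities of the individual arcs, verifying in each case that one of the exchange moves yields an odd closed walk strictly shorter than $g$. No single inequality closes all cases; the obstacle is organising these cases so that some admissible detour always beats the corresponding arc.
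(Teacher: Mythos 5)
Your setup is correct as far as it goes (the spoke count is even by the cut/parity argument, each arc has length at least one, and the parity-refined geodesic property of a shortest odd cycle is valid), but the proposal has a genuine gap: the entire load-bearing step is deferred and never carried out. For $s\ge 2$ you only describe a strategy --- exchange a shortest inner arc for an outer detour, or symmetrically --- and you explicitly concede that ``no single inequality closes all cases'' and that the ``real work'' is an unorganized case analysis over the parities of $n$, $k$ and the arcs. Nothing is actually proved in the case $s\ge 2$. Moreover, the local-exchange strategy itself is doubtful: the feet of an inner arc of length $q$ are $u_a$ and $u_b$ with $b\equiv a\pm qk \pmod n$, so the shorter outer detour has length $\min(t,n-t)$ with $t\equiv \pm qk\pmod n$, and because each inner edge shifts the subscript by $k$ rather than $1$, this detour can be far longer than the complementary arc of length $g-q-2$. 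So single-arc exchanges can genuinely fail, and it is not clear that any combination of them can be organized to always succeed; pushed to its conclusion, what one really wants is to exchange everything at once.

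That global rerouting is the missing idea, and it needs no case analysis. Each spoke preserves the subscript, each outer edge shifts it by $\pm 1$, and each inner edge shifts it by $\pm k$; traversing the cycle $C$ once, the total shift is $\equiv 0 \pmod n$. So if $C$ has $a$ outer edges, $b$ inner edges and $c>2$ spokes (hence $c\ge 4$, as $c$ is even), build the closed walk that starts at $u_0$, performs the $a$ outer shifts consecutively along the outer rim, takes one spoke, performs the $b$ inner shifts consecutively (these all stay on the single inner rim through the current vertex), necessarily arriving at $v_0$ since the shifts sum to $0 \pmod n$, and returns by the spoke $v_0u_0$. This walk has length $a+b+2$, which is odd because $a+b+c$ is odd and $c$ is even, so it contains an odd cycle of length at most $a+b+2 < a+b+c = |C|$. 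Hence every odd cycle with more than two spokes is strictly longer than the odd girth, which is exactly the lemma. I recommend replacing your exchange framework by this one-step construction; your preliminary observations (even spoke count) then suffice.
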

\begin{proof}Let $C$ be a odd cycle of length $\ell$ using $a$ exterior edges, $b$ interior edges and $c$ spokes with $c > 2$, and let us prove that $\ell = a + b + c > g$.  Since the set of spokes is a cut-set, then $c$ is even. One can construct an odd closed walk of length $a+b+2$ using $a$ exterior edges, $b$ interior edges all belonging to the same inner rim, and $2$ spokes.  Hence, $\ell = a + b + c > a + b + 2 \geq g$.    
\end{proof}

Now, we can prove that (b) implies (a) in Theorem~\ref{thm:cores}

\begin{proposition}\label{pr:bimpliesa}
Let $G(n,k)$ be a non bipartite generalized Petersen graph of odd girth $g$. If there exists a cycle $C$ of length $g$ passing through both inner and outer vertices, then $G(n,k)$ is a core.
\end{proposition}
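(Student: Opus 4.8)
The plan is to argue through the core. By Proposition~\ref{prop:cuore} the core $X$ of $G(n,k)$ is a retract; fix a retraction $r\colon G(n,k)\to X$. Since a retract is an induced subgraph, it suffices to prove $V(X)=V(G(n,k))$, i.e.\ that $r$ is the identity. By hypothesis together with Lemma~\ref{lem:oddgirthinteriorcycles} I may fix a shortest odd cycle $C$ that uses \emph{exactly} two spokes, with $a$ outer edges, $b$ inner edges and $a+b+2=g$; here $a,b\ge 1$, since a two-spoke cycle cannot have both spokes incident to the same outer (resp.\ inner) vertex.

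First I would record a rigidity principle for shortest odd cycles. As $g$ is the odd girth, any closed walk of length $g$ is in fact a $g$-cycle: a backtrack or a repeated vertex would split it into a strictly shorter odd closed walk, hence a strictly shorter odd cycle, contradicting minimality. Thus $r(C)$ is a $g$-cycle contained in $X$, and being a surjection between vertex sets of the same size $g$, the restriction $r|_C$ is a bijection, hence an isomorphism of $g$-cycles. The heart of the argument is to exhibit inside $X$ a $g$-cycle that again uses two spokes, i.e.\ to rule out that $r(C)$ lies in a single rim. An all-outer $g$-cycle must be the whole outer rim (so $g=n$), and an all-inner one must be an entire inner-rim cycle (so $g=n/d$, with $d=\gcd(n,k)$). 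In either case the vertices of that rim lie in $X$ and are fixed by $r$; since $C$ meets that rim in at least two consecutive vertices (as $a\ge1$, resp.\ $b\ge1$), the cycle isomorphism $r|_C$ fixes a directed edge and is therefore the identity on $C$ --- contradicting that $C$ also has vertices off the rim.

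The delicate point, which I expect to be the main obstacle, is the all-inner case when $d>1$: then the inner arc of $C$ may lie on an inner-rim cycle \emph{different} from $r(C)$, so no vertex of $C$ is forced to be fixed and the rigidity argument stalls. This case is genuine --- the dodecahedron $G(10,2)$ has $d=2$, $g=5=n/d$, satisfies (b), and is a core --- so it cannot be dismissed. Here I would instead show that the core cannot be swallowed by a single inner cycle: a connected $X\subseteq V_I$ would have to be one entire inner-rim cycle $C_{n/d}$, and I would derive a contradiction from the way the spokes and the outer rim interact with such a fold, using the dihedral symmetries $\alpha,\beta$, the automorphism-realization part of Proposition~\ref{prop:cuore}, and Lemma~\ref{lem:genprismnocore} to exclude the resulting generalized-prism-like retract. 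Concluding this step forces $r(C)$ to use two spokes, so $X$ contains a two-spoke $g$-cycle $C'$.

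Finally, $C'\subseteq X$ is fixed pointwise by $r$, so $X$ already contains a spoke together with an arc of each rim. It then remains to propagate membership to all of $G(n,k)$. Since the rotation $\alpha$ acts transitively on the outer rim and on each inner rim, I would combine this with Proposition~\ref{prop:cuore} and Lemma~\ref{lem:genprismnocore} (no proper generalized-prism retract) to show that every spoke and every rim edge must lie in $X$, whence $X=G(n,k)$ and $G(n,k)$ is a core. I expect the two steps that carry the real weight to be the exclusion of a single-inner-rim fold when $d>1$ and this final propagation, both of which rest on the abundance of dihedral symmetries together with the prism obstruction of Lemma~\ref{lem:genprismnocore}.
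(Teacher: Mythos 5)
Your proposal is not a complete proof: it contains one invalid inference and defers exactly the two steps where the real difficulty lies. The invalid step is the rigidity argument. If $r(C)$ is, say, the whole outer rim, then $r|_C$ is an isomorphism between two \emph{distinct} $g$-cycles $C$ and $r(C)$ that happen to share a directed edge; fixing that edge does not make $r|_C$ the identity. All it gives, by induction around the cycle, is that $r$ ``unrolls'' $C$ onto the rim, i.e.\ $r(x_j)=u_{i+j}$ for the $j$-th vertex of $C$ after $u_i$ --- which is perfectly consistent (indices close up because $g=n$ in this case) and is precisely what a retraction onto the rim would do. So even the all-outer case is not ruled out by your argument as written. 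The paper kills every single-rim image at once by a genuinely different idea: it shows there is \emph{no homomorphism at all} from $G(n,k)$ to a $g$-cycle, using the $8$-cycle $(u_0,u_1,v_1,v_{k+1},u_{k+1},u_k,v_k,v_0)$, in which any two consecutive edges contain a spoke; the hypothesis then puts every three consecutive vertices of this $8$-cycle on some $g$-cycle, homomorphisms are injective on shortest odd cycles, and walking around the $8$-cycle forces $g\mid 8$, a contradiction. You have no substitute for this lemma.

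The two steps you explicitly postpone (``I would show\dots'') are not routine clean-up; they are where the paper's proof spends most of its effort. After ruling out a cyclic core, the paper does a degree analysis ($d_I,d_O\in\{2,3\}$ via Proposition~\ref{prop:cuore} and the rotation $\alpha$) and then a case analysis on the number $t$ of inner cycles of the core, using Lemma~\ref{lem:genprismnocore} for $t=2$, a bipartiteness argument for odd $t\geq 3$, and, for $t=1$, a delicate argument with two overlapping two-spoke $g$-cycles $C,C'$ whose images under the retraction would have to use at least four spokes, contradicting Lemma~\ref{lem:oddgirthinteriorcycles}. Note in particular that your final ``propagation'' claim --- that once $X$ contains a two-spoke $g$-cycle one can conclude $X=G(n,k)$ --- is false as a heuristic: the paper's $t=1$ candidate core $Y$ contains two-spoke $g$-cycles, spokes, and arcs of both rims, yet excluding it is the hardest part of the proof, not a consequence of symmetry plus the prism lemma. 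As it stands, your text is a plausible plan whose concrete portion is flawed and whose remaining portions are exactly the open heart of the problem.
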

\begin{proof}We first observe that if $G(n,1)$ is non bipartite, then the odd girth is $n$ and there are no cycles of length $n$ passing through inner and outer vertices. Take now $G(n,k)$ a non bipartite generalized Petersen graph having a cycle $C$ of length $g$ passing through both inner and outer vertices. By our previous observation we have that $k \geq 2$. 

 Let $C_g = (w_0,\ldots,w_{g-1},w_0)$ be a cycle of length $g$ and let us prove that there are no homomorphisms from  $G(n,k)$ to $C_g$. Assume by contradiction that there is such a homomorphism~$\phi$. Consider the  $8$-cycle  $C_8 = (u_0,u_1,v_1, v_{k+1}, u_{k+1}, u_{k}, v_k, v_0, u_0)$ in $G(n,k)$.  One observes that in every two consecutive edges in $C_8$, one of them is a spoke (see Figure~\ref{fig:10-2}). Thus, every three consecutive vertices in $C_8$ belong to a cycle of length $g$ (this is because $G(n,k)$ has an odd cycle of length $g$ with inner, outer and spoke edges). 
Assume without loss of generality that $\phi(u_0) = w_0$ and $\phi(u_1) = w_1$. Since $v_1$ is adjacent to $u_1$, then $\phi(v_1) \in \{w_0,w_2\}$. Moreover, we have that $u_0,u_1,v_1$ belong to a cycle $C$ of length $g$, hence $\phi$ restricted to the vertices of $C$ has to be bijective and, thus $\phi(v_1) = w_2$. Repeating an analogous argument we get that $\phi(v_{k+1}) = w_{3},\, \phi(u_{k+1}) = w_{4},\ldots,\, \phi(u_0) = w_{8},$ where the subindices in the vertices of $C_g$ are taken modulo $g$. Thus, we get that $w_0 = \phi(u_0) = w_{8}$ and $g$ divides $8$, a contradiction. 

\begin{figure}
\includegraphics[scale=.7]{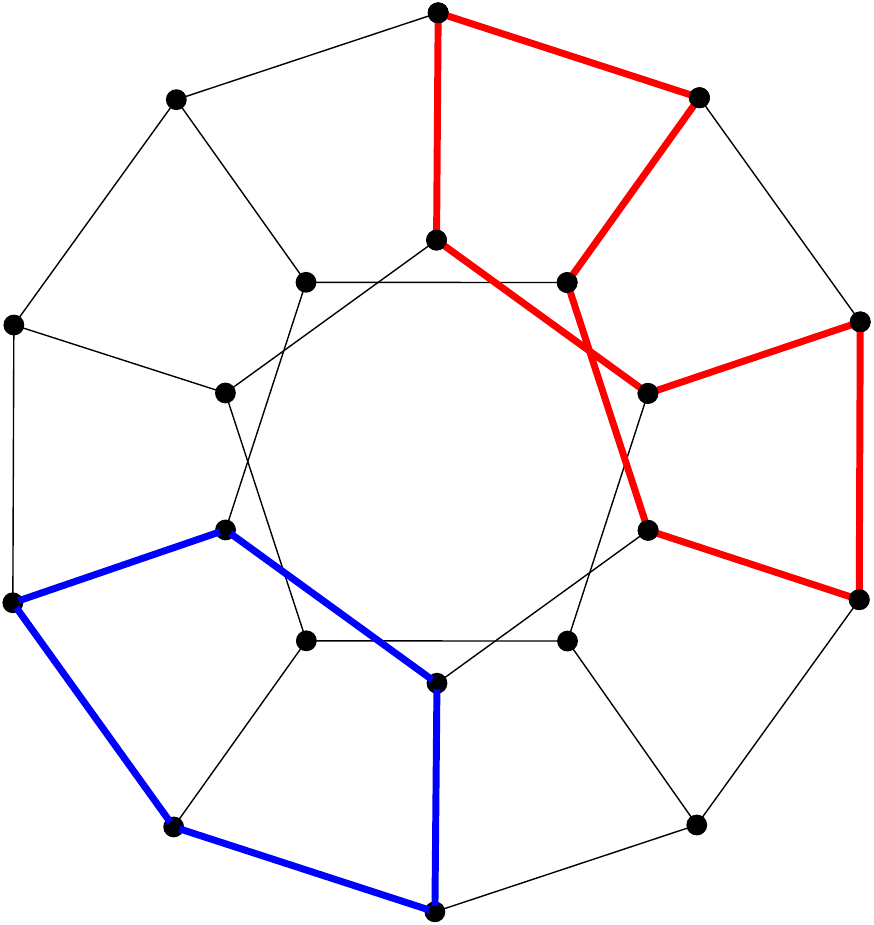}
\caption{The dodecahedron graph $G(10,2)$ has odd girth equal to $5$ and there exists a $5$-cycle (in blue) passing through both inner and outer vertices. As a consequence, every three consecutive vertices of the $8$-cycle $C_8$ (in red) belong to a $5$-cycle.} \label{fig:10-2}
\end{figure}

Let $X$ be a core of $G(n,k)$. We know that $X$ contains a cycle of length $g$ but we have proved that $X$ itself is not a cycle of length $g$. 
Since $X$ has to be connected, then $X$ has both inner and outer vertices of $G(n,k)$.  By  Proposition~\ref{prop:cuore}, we also have that all inner (respect. outer) vertices have the same degree in $X$, which we denote $d_I$ (respect. $d_O$). We have that $d_I \geq 2$ and $d_O \geq 2$. Moreover, since $X$ is connected and is not a cycle, then $d_I$ and $d_O$ cannot be both two. If $d_O = 3$, since $X$ is an induced subgraph,  we have that $X = G(n,k)$. Assume now that $d_I = 3$ and $d_O = 2$ and let us prove that this cannot happen. We denote by $t$ the number of inner cycles of $X$ (that is, the number of connected components of the induced subgraph with vertices $V(X) \cap V_I$).  Since the number of vertices of odd degree in $X$ has to be even, then either $t$ is even or the inner cycles are even.  We separate two cases:

\emph{Case 1: $t$ is even:}  If $t = 2$, then $X$ is the graph in Lemma~\ref{lem:genprismnocore} and, thus, it is not a core (see Figure~\ref{fig:subPet15-3}). If $t \geq 4$, then $X$ is not connected, which is not possible.

\begin{figure}
\includegraphics[scale=.7]{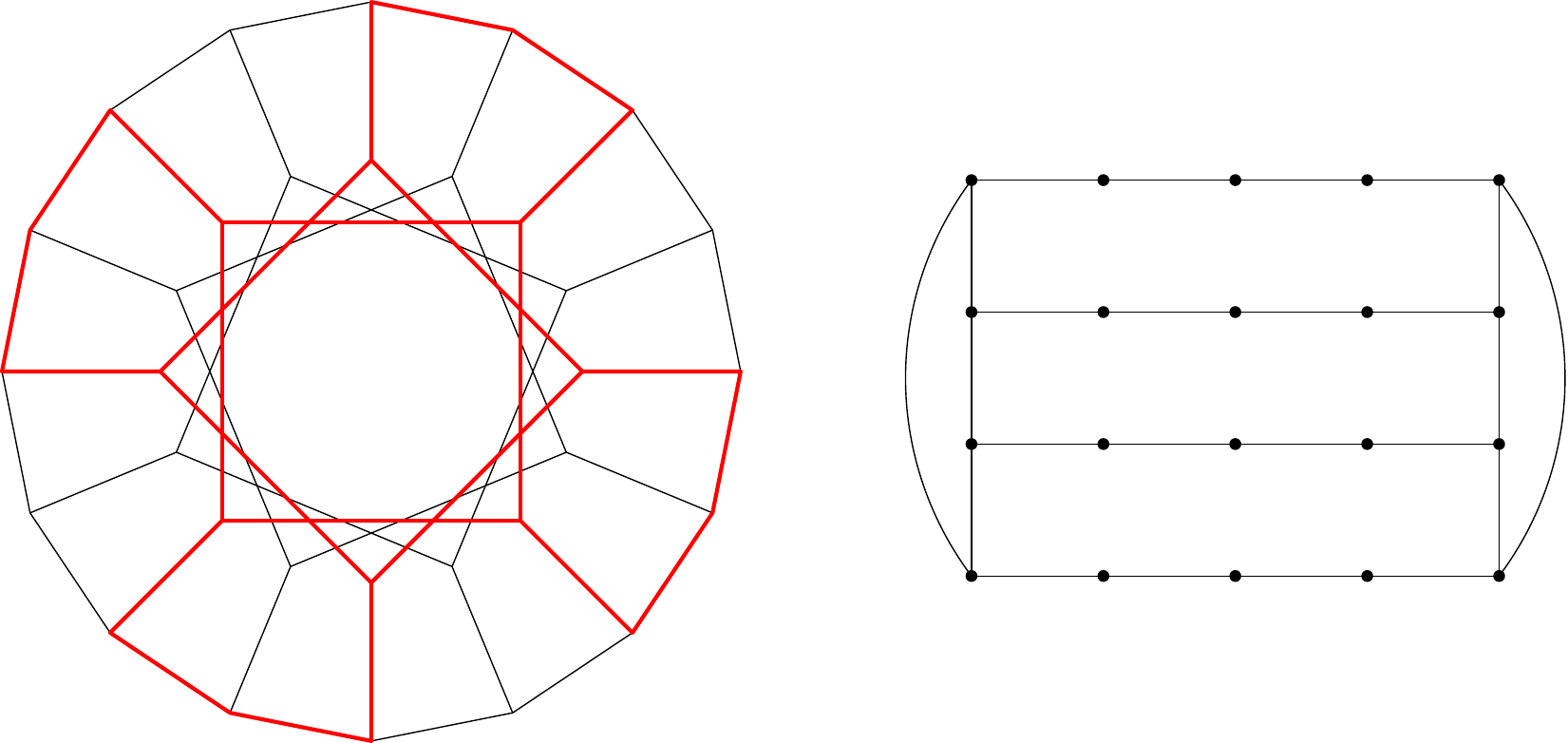}
\caption{Left: graph $G(16,4)$ with a connected induced subgraph $X$ (in red). In $X$ all interior vertices have degree $3$, all exterior vertices have degree $2$, and $X$ contains $2$ inner cycles. Right: a graph isomorphic to $X$.} \label{fig:subPet15-3}
\end{figure}

\emph{Case 2: $t$ is odd and the inner cycles are even:}  If $t \geq 3$, we claim that $X$ has no cycles of length $g$. Indeed, if $X$ has a cycle of length $g$, then it uses at most two spoke edges by Lemma~\ref{lem:oddgirthinteriorcycles}. However, all the induced subgraphs of $X$ with at most two spoke edges are bipartite, a contradiction (see Figure~\ref{fig:Pet24-6}).

\begin{figure}
\includegraphics[scale=.7]{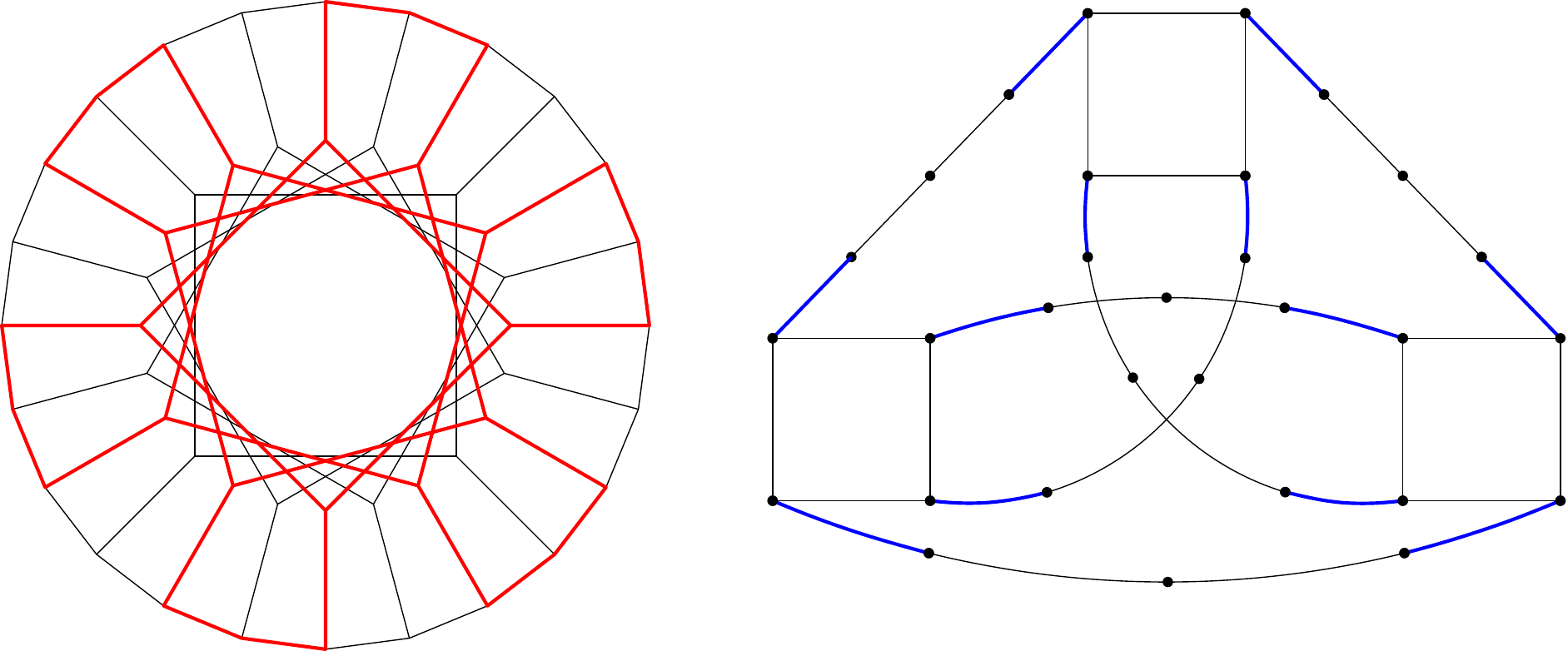}
\caption{Left: graph $G(24,6)$ with a connected induced subgraph $X$ in red. In $X$ all interior vertices have degree $3$, all exterior vertices have degree $2$, and $X$ contains $3$ inner cycles. Right: a graph isomorphic to $X$, where the spoke edges are in blue. Any subgraph of $X$ with at most two blue edges is bipartite.} \label{fig:Pet24-6}
\end{figure}

If $t = 1$, then denoting $d := \gcd(n,k)$ we have that $X$ is isomorphic to the induced graph $Y$ with inner vertices $\{ v_{i} \, \vert \, d$ divides $i\}$ and outer vertices $\{u_i \, \vert \, d$ divides $i\} \cup \{u_i \, \vert \, \lfloor i/d \rfloor$ is even$\}$ (see Figure~\ref{fig:subPet16-6}). We have that $Y$ has a cycle of length $g$, and the inner cycles are all even. Then, by Lemma~\ref{lem:oddgirthinteriorcycles}, every cycle of length $g$ in $Y$ uses exactly two spokes. Therefore, there is a cycle $C$ of length $g$ such that $V(C) \cap V_O = \{u_0,u_1,\ldots,u_d\}$ and $u_0v_0,u_dv_d \in E(C)$.  Similarly, there is a cycle $C'$ of length $g$ such that $V(C') \cap V_O = \{u_d,u_{d+1},\ldots,u_{2d}\}$ and $u_dv_d, u_{2d}v_{2d} \in E(C')$. Now, consider $\pi$ the retraction from $G(n,k)$ to $Y$. Since $\pi$ is the identity on $Y$ we have that $\pi(u_d) = u_d,\, \pi(v_d) = v_d, \, \pi(u_{2d}) = u_{2d},\,  \pi(v_{2d}) = v_{2d}$. Moreover, $\pi$ sends $C'$ to a cycle of length $g$ in $Y$. However, any cycle in $Y$ containing the edges $u_dv_d,u_{2d}v_{2d}$ passes through (at least) four spokes, and this contradicts Lemma~\ref{lem:oddgirthinteriorcycles}.

\begin{figure}
\includegraphics[scale=.7]{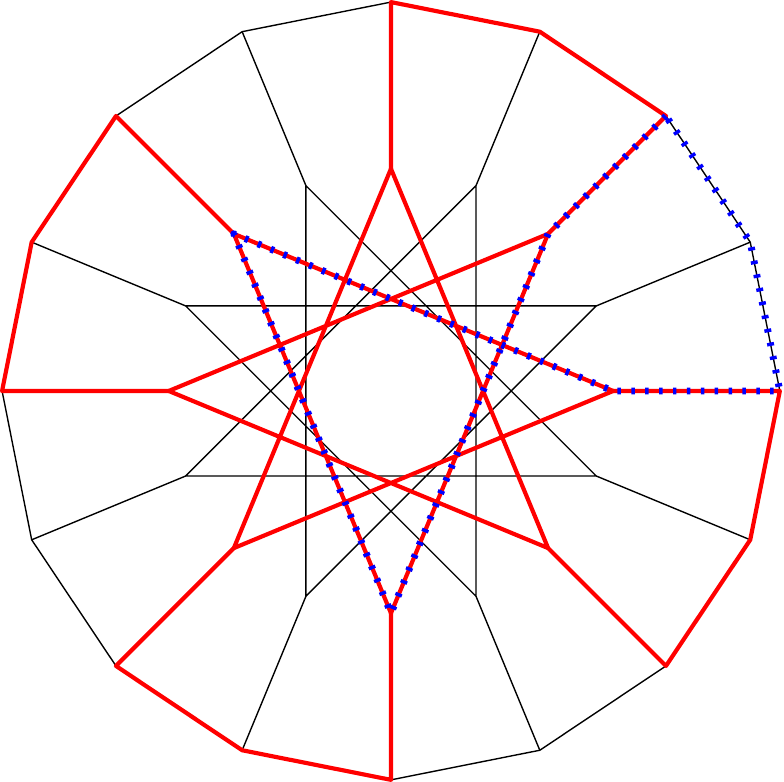}
\caption{Graph $G(16,6)$ with a connected induced subgraph $Y$ in red. In $Y$ all interior vertices have degree $3$, all exterior vertices have degree $2$, and $Y$ contains one inner cycle. In blue, the $7$-cycle $C'$ whose length equals the odd girth of $G(16,6)$.} \label{fig:subPet16-6}
\end{figure}

\end{proof}

Now we can proceed with the proof of the main result.

\noindent {\bf Proof of Theorem~\ref{thm:cores}.} 
$(b) \Longrightarrow (a)$ is Proposition~\ref{pr:bimpliesa}. Let us prove $(c) \Longrightarrow (b)$. We observe that the subgraph induced by $V_I$ consists of $d$ disjoint cycles of length $n/d$ and the subgraph induced by $V_0$ is just a cycle of $n$ vertices. 
If $(c.1)$ holds, then $n/d$ (and $n$) are even, so any odd cycle has inner and outer vertices. So, assume that $n/d$ is odd and if either $(c.2)$ or $(c.3)$ holds, we are going to find an odd cycle of length $\leq n/d$ passing through inner and outer vertices.    If $(c.2)$ holds, the odd cycle 
\[ (v_d = v_{ak}, v_{(a+1)k}, \ldots, v_{(n/d) k} = v_0, u_0, u_{1},\ldots, u_d, v_d).\]
has length $n/d - a + d + 2 \leq n/d$, so we are done. If $(c.3)$ holds, the odd cycle
\[ (v_0, v_k, \ldots, v_{ak} = v_d, u_d, u_{d-1},\ldots, u_0, v_0).\] 
has length $a + d + 2 \leq n/d$, so we are done.

Let us prove now $(a) \Longrightarrow (c)$ by contradiction.  So, we assume that $n/d$ is odd, and 
\begin{itemize} \item[(1)] if $a+d$ is even, then $a \leq d$ and \item[(2)] if $a+d$ is odd, then $a+d \geq n/d$,
\end{itemize} and we aim at proving that $G(n,k)$ is not a core in either case. For this purpose, we are going to describe a retraction $f$ from $G(n,k)$ to one of the inner cycles, namely  \[ C = (v_0, v_k, v_{2k}, \ldots, v_{(g-1)k}, v_{gk} = v_0).\] Since  $a k \equiv d \ ({\rm mod}\ n)$,
we have that $v_d \in V(C)$. We separate two cases:

If (1) holds, we define $f$ for the vertices in the outer rim as follows: for $i \in \Z$ we denote by $q$ and $r$ the quotient and remainder of the Euclidean division of $i$ by $d$, i.e., $i = q d + r$ with $0 \leq r < d$, and set

\begin{equation}\label{eq:defretracto}  f(u_i) = \left\{ \begin{array}{lll} v_{qd + (r+1)k} & {\rm  if} & r = 0,\ldots, a-1,  \\    v_{(q+1)d+k} & {\rm  if } & a \leq r < d,\, r \equiv a\ ({\rm mod}\ 2),\\  v_{(q+1)d} & {\rm  if } & a < r < d,\, r \not\equiv a\ ({\rm mod}\ 2). \end{array} \right. \end{equation}
Concerning the inner rim: we set  $f(v_j) = v_{l - k}$ whenever  $f(u_j) = v_l$,  for all $j \in \N$; as usual, all the subindices are taken modulo $n$ (see Figures~\ref{fig:15-3} and~\ref{fig:15-6}).

\begin{figure}
\includegraphics[scale=.5]{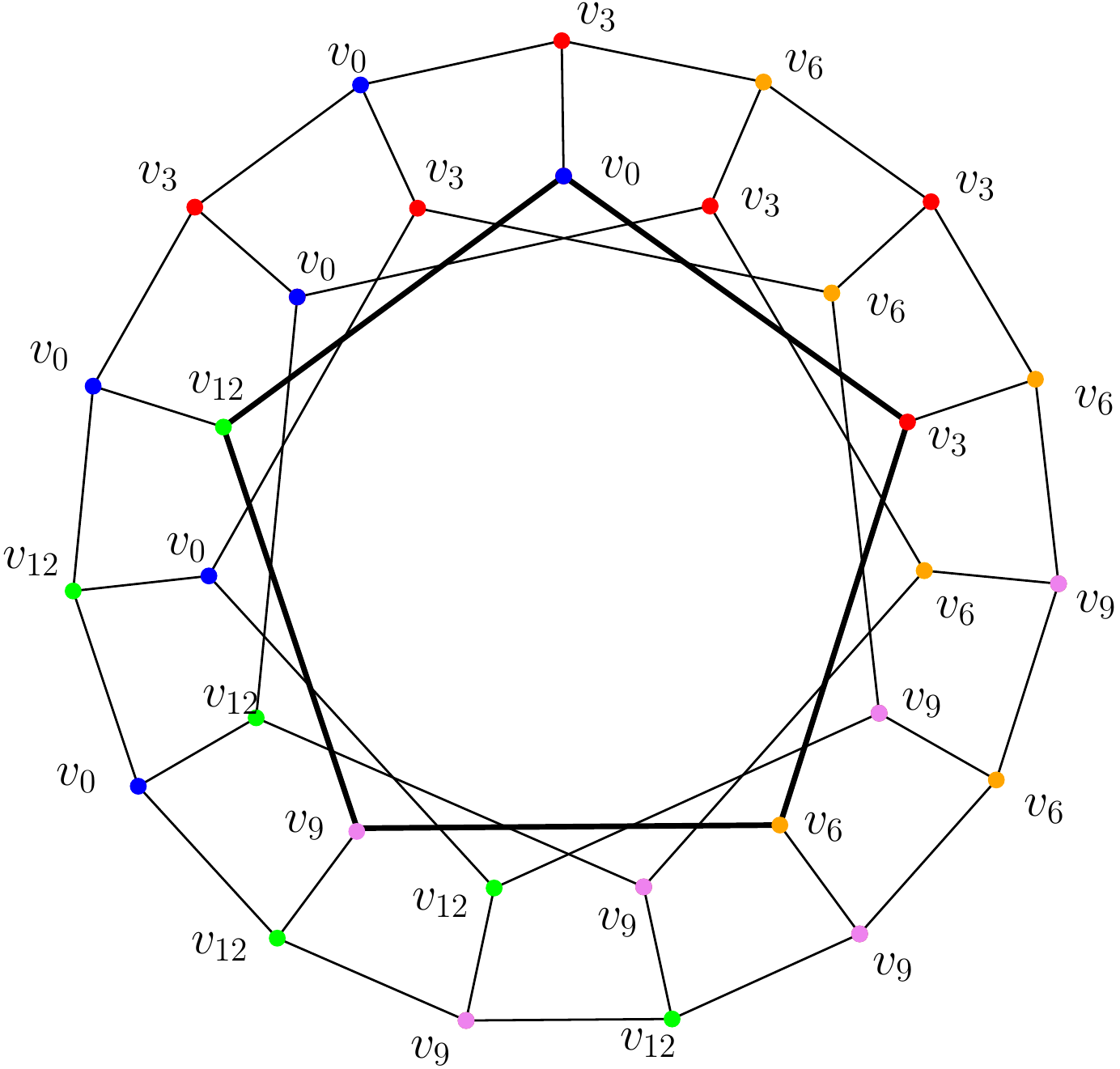}
\caption{Figure illustrating the retraction from $G(15,3)$ to the bold 5-cycle $C = (v_0,v_3,v_6,v_9,v_{12},v_0)$ described in (\ref{eq:defretracto}). In this example $g = 5$, $d = 3$ and $a = 1$.} \label{fig:15-3}
\end{figure}
\begin{figure}
\includegraphics[scale=.5]{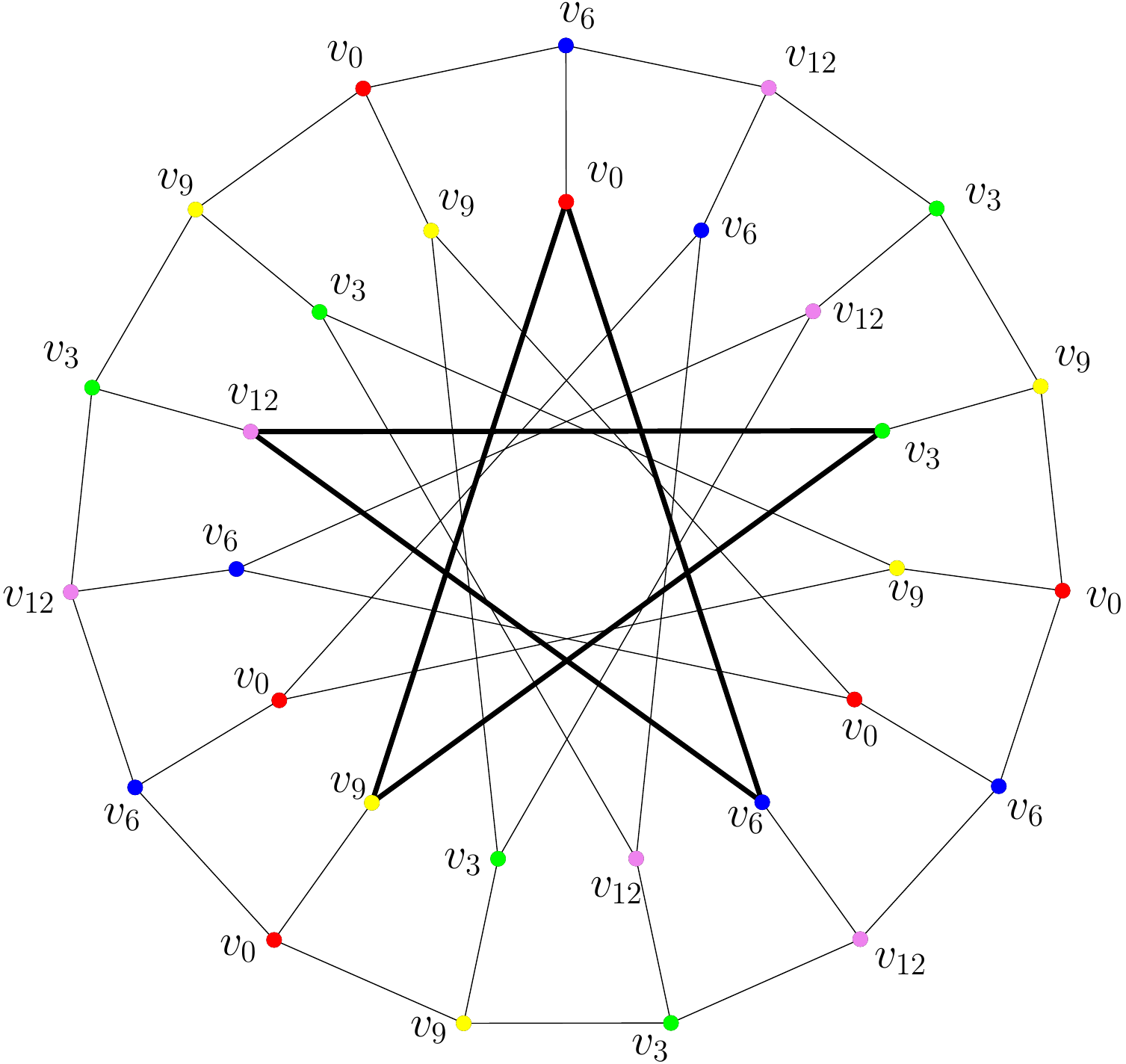}
\caption{Figure illustrating the retraction from $G(15,6)$ to the bold 5-cycle $C = (v_0,v_6,v_{12},v_3,v_{9},v_0)$  described in (\ref{eq:defretracto}). In this example $g = 5$, $d = 3$ and $a = 3$.}\label{fig:15-6}
\end{figure}

{\it $f$ is well-defined}.  This holds since $f(u_{i+n}) = f(u_i)$ and $f(v_{i+n}) = f(v_i)$ for all $i \in \Z$..

{\it $f$ is a homomorphism.} That is, if $xy \in E(G(n,k))$, then $f(x) f(y) \in E(G(n,k))$, we separate three cases:
if $xy \in E_S(n,k)$ is a spoke, then $f(x)f(y) \in E(G(n,k))$ by definition. If $u_i u_{i+1} \in E_O(n,k)$ is an outer edge with $0 \leq i \leq n-1$. We denote by $q$ (respect. $q'$) and $r$ (respect. $r'$) the quotient and the remainder of the division of $i$ (respect. $i+1$) by $d$. 
\begin{itemize}
\item Case $r < d$: we have that $q = q'$, $r' = r+1$. Thus, $f(u_i) f(u_{i+1}) \in E(G(n,k))$. 
\item Case $r = d-1$: we have that $q' = q+1$, $r' = 0$ and, since $r \not\equiv a \ ({\rm mod}\ 2)$,  it follows that $f(u_i) = v_{(q+1)d}$ and $f(u_{i+1}) = v_{q'd + k} = v_{(q+1)d+k}$. Thus, $f(u_i) f(u_{i+1}) \in E(G(n,k))$.  
\end{itemize}

Inner edges: let $v_i v_{i+k} \in E_I(n,k)$. We observe that $f(v_i) f(v_{i+k}) \in E(G(n,k))$ if and only if $f(u_i) f(u_{i+k}) \in E(G(n,k))$. Moreover, denoting by $q$ (respect. $q'$) and $r$ (respect. $r'$) the quotient and the remainder of the division of $i$ (respect. $i+k$) by $d$; we have that $q' = q + (k/d)$ and $r' = r$. It follows that if $f(u_i) = v_l$ then $f(u_{i+k}) = v_{l + (k/d)d} = v_{l+k}$. Thus $f(u_i) f(u_{i+k}) \in E(G(n,k))$.

{\it $f$ is a retraction from $G(n,k)$ to $C$.} It is clear that the image of every vertex lies in $V(C)$, so it just remains to prove that $f(v) = v$ for all $v \in V(C)$. Take $v \in V(C)$, then $v = v_{\lambda k}$ for some $\lambda \in \{0,\ldots,g - 1\}$.  Following (\ref{eq:defretracto}) we have that $f(u_{\lambda k}) = v_{\lambda k + k}$ and, thus, $f(u_{\lambda k}) = u_{\lambda k}$; and we are done.

If (2) holds, we consider $G(n,k') = G(n, n-k)$ one has that for this new graph $a' = g - a$ and, thus $a' + d$ even, so the condition $a + d \geq n/d$ is equivalent to $a' \leq d$ and we can apply case (1) to $G(n,k')$ to get the result. 
Note that in this part of the proof we are considering a nonstandard generalized Petersen graph $G(n,k')$, where $k' \geq n/2$ (see Figure~\ref{fig:10-4} for an example of the retraction~$f$ obtained in this way). $\hfill \qed$

\begin{figure}

\includegraphics[scale=.7]{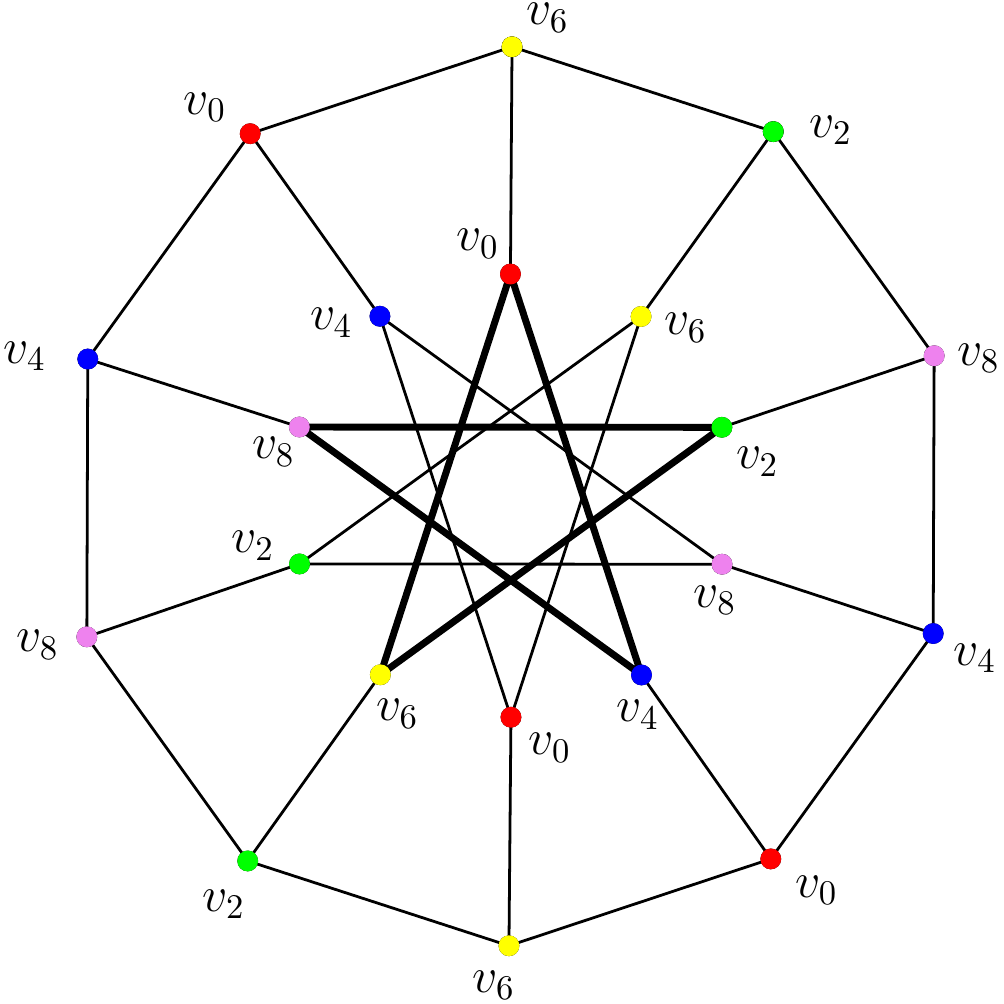}
\caption{Figure illustrating the retraction from $G(10,4)$ to the bold 5-cycle $C = (v_0,v_4,v_{8},v_2,v_{6},v_0)$. In this example $g = 5$, $d = 2$ and $a = 3$.}\label{fig:10-4}
\end{figure}

As an  easy consequence we have the  following result:

\begin{corollary}\label{cor:primos} Let $G(n,k)$ be a generalized Petersen graph. 
If $\gcd(n,k) = 1$, then $G(n,k)$ is a core if and only if $G(n,k)$ is not bipartite and $k \neq 1$, i.e., $G(n,k)$ is not the $n$-prism.
\end{corollary}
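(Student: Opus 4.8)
The plan is to specialize the characterization of Theorem~\ref{thm:cores} to the case $d = \gcd(n,k) = 1$, after two quick reductions. First I would note that $G(n,k)$ always has $2n \geq 6$ vertices, so it is never a single edge or vertex; consequently, if it is bipartite it cannot be a core, which already gives one implication of the ``only if'' direction. Next, recalling that $G(n,k)$ is bipartite exactly when $n$ is even and $k$ is odd, and that $\gcd(n,k)=1$ forces $k$ to be odd whenever $n$ is even, I conclude that under $\gcd(n,k)=1$ the graph is bipartite precisely when $n$ is even. Thus ``non-bipartite'' is equivalent to ``$n$ odd'', and the whole statement reduces to deciding, for $n$ odd, when $G(n,k)$ is a core.

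For the prism case $k = 1$ I would argue non-coreness directly: $G(n,1)$ is exactly the graph of Lemma~\ref{lem:genprismnocore} with $\ell = n$ and with the $n$ spokes as the paths of length one, so it is not a core. (Equivalently, with $d=1$ one has $a=1$ and all of (c.1)--(c.3) fail.) Combined with the previous paragraph, this shows that a core $G(n,k)$ with $\gcd(n,k)=1$ must be non-bipartite and satisfy $k \neq 1$, finishing the ``only if'' direction.

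For the converse I would assume $n$ odd and $k \neq 1$ and verify condition (c) of Theorem~\ref{thm:cores} with $d = 1$, where $a$ is the inverse of $k$ modulo $n$ with $0 < a < n$. Since $n$ is odd, $n/d = n$ is odd and (c.1) fails, so everything hinges on the parity of $a$. The key elementary step is to locate $a$ in the range $2 \leq a \leq n-2$: the values $a = 0, 1, n-1$ are excluded because they would force $k \equiv 0, 1, -1 \pmod n$, all impossible for $2 \leq k < n/2$. With $d=1$, condition (c.2) reads ``$a$ odd and $a \geq 3$'' and (c.3) reads ``$a$ even and $a \leq n-3$''. If $a$ is odd then $a \geq 3$ is automatic from $a \geq 2$, so (c.2) holds; if $a$ is even then $a \leq n-2$ together with $n$ odd (so $n-2$ is odd) gives $a \leq n-3$, so (c.3) holds. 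In either case $G(n,k)$ is a core.

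I expect no genuine obstacle here: the only delicate point is the range argument for $a$ together with the parity bookkeeping, and the inequalities in (c.2) and (c.3) are exactly tight against the extreme admissible values $a = 2$ and $a = n-2$, both of which are precisely the ones ruled out. Everything else is a direct translation of Theorem~\ref{thm:cores}.
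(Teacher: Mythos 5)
Your proof is correct, but it takes a genuinely different route through Theorem~\ref{thm:cores} than the paper does: you verify the arithmetic condition (c), while the paper verifies the cycle condition (b). Concretely, after the same two reductions (bipartite graphs and prisms are not cores), the paper exhibits explicit short odd cycles using a spoke: $(u_0,u_1,\ldots,u_k,v_k,v_0,u_0)$ of length $k+3\leq n$ when $k$ is even, and $(u_0,u_{n-1},\ldots,u_k,v_k,v_0,u_0)$ of length $n-k+3\leq n$ when $k$ is odd; since with $\gcd(n,k)=1$ the only spokeless cycles are the two rims, of length $n$, a minimum odd cycle must then use a spoke, which is exactly (b). Your argument replaces this geometry by modular arithmetic: with $d=1$ the parameter $a$ is the inverse of $k$ modulo $n$, the hypotheses $k\neq 1$ and $2\leq k<n/2$ pin $a$ into the range $2\leq a\leq n-2$, and the parity dichotomy then yields (c.2) or (c.3). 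Each approach has a small payoff: yours makes explicit an observation the paper leaves implicit (its ``$n$ even'' case is in fact vacuous, since $\gcd(n,k)=1$ together with non-bipartiteness forces $n$ odd), and it disposes of the prism cleanly via Lemma~\ref{lem:genprismnocore} rather than by citation; the paper's approach produces the short odd cycles themselves, hence concrete information about where the odd girth is attained. One cosmetic slip in your closing remark: the inequalities in (c.2) and (c.3) are tight against the excluded values $a=1$ and $a=n-1$ (corresponding to $k\equiv\pm 1 \pmod n$), not against $a=2$ and $a=n-2$; this does not affect the argument, since your actual range and parity steps are correct.
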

\begin{proof} It is easy to check that $n$-prisms are not cores (see, e.g., Theorem~\ref{thm:cores}) and that bipartite (nontrivial) graphs are not cores either. 

Let $G(n,k)$ be a non bipartite generalized Petersen graph with $k \neq 1$ and $\gcd(n,k) = 1$. Then, both the inner and the outer rims are cycles of length $n$. 

If $n$ is even, then all odd cycles pass through inner and outer vertices. Thus,  $G(n,k)$ is  a core by Theorem~\ref{thm:cores}.(b). 
Assume now that $n$ is odd, we separate two cases. If $k$ is even, we consider the odd cycle $C_1 = (u_0, u_1,\ldots, u_k, v_k, v_0, u_0)$ of length $k + 3 \leq 2k + 1 \leq n$. If $k$ is odd (and $k \neq 1$), we consider the odd cycle $C_2 = (u_0, u_{n-1},\ldots,u_k, v_k, v_0, u_0)$ of length $n - k + 3 \leq n$.  In both cases we have found an odd cycle of length $\leq n$ passing through inner and outer vertices. Hence, the result follows from Theorem~\ref{thm:cores}.(b). 
\end{proof}

\bigskip

\noindent {\bf Proof of Corollary~\ref{cor:endtrans}}. Transitive and bipartite graphs are always endomorphism transitive and core graphs are transitive if and only if they are endomorphism transitive. So, to finish the proof it suffices to consider $G(n,k)$ non bipartite, not a core and endomorphism transitive and prove that it is transitive. Since $G(n,k)$ is not a core, by Theorem~\ref{thm:cores}.(b) we have that there is no odd cycle of length $g$ passing through inner and outer vertices. As a consequence, the odd girth of $G(n,k)$ is $g = n/\gcd(n,k)$ and the inner cycle  $C = (v_0, v_k, \ldots, v_{kg} = v_0)$ is an odd cycle of length $g$. Consider now $h$ an endomorphism such that $h(v_0) = u_0$. Since $h$ is an endomorphism, $C' = (h(v_0) = u_0, h(v_k), \ldots, h(v_{kg}) = h(v_0) = u_0)$ has to be an odd cycle of length $g$ and thus, $C'$ has to be the outer cycle and $\gcd(n,k) = 1$. Moreover, by Theorem~\ref{thm:cores}.(c) we have that either $a$ is odd and $a \leq 1$, or $a$ is even and $a \geq n-1$; in both cases we get that $v_{n-1}v_0$ is an edge and, therefore, $k = 1$. As a consequence $G(n,k)$ is the $n$-prism and it is transitive. $\hfill \qed$

\section{Cayley graphs}
In this section we study the question which generalized Petersen graphs come from Cayley graphs of semigroups and monoids. Before describing our results, we introduce some definitions. The (right) Cayley graph $\Cay(S,C)$ of the semigroup $S$ with respect to the connection set $C\subseteq S$ is the directed looped multigraph with vertex set $S$ and one arc $(s,sc)$ for each $s\in S$ and $c\in C$. The \emph{underlying} graph of a directed looped multigraph is obtained by suppressing loops, forgetting orientations, and merging parallel edges into one.
We say that $G$ is a \emph{group graph}, \emph{monoid graph}, or \emph{semigroup graph}, if $G$ is the underlying graph of the Cayley graph of a group, monoid, and semigroup, respectively. If we want to specify a representation we say $G$ is a semigroup graph $\Cay(S,C)$, and similarly for the case of monoids and groups. 
In~\cite{Ned-95} generalized Petersen graph that are group graph are characterized (see also~\cite{Lov-97}).
\begin{theorem}\cite{Ned-95,Lov-97} \label{thm:cayleygroup} $G(n,k)$ is a group graph if and only if $k^2 \equiv 1 \ ({\rm mod}\ n)$. 
\end{theorem}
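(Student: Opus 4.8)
The plan is to use Sabidussi's classical characterization: a simple graph $G$ is a group graph if and only if $\Aut(G)$ contains a subgroup acting regularly (sharply transitively) on $V(G)$. Since the underlying graph of a group Cayley graph is exactly an undirected simple Cayley graph, this reduces the theorem to deciding when $\Aut(G(n,k))$ has a regular subgroup of order $2n = |V(G(n,k))|$, and throughout I would lean on the description of $\Aut(G(n,k))$ in Theorem~\ref{thm:autgroup} together with Corollary~\ref{cor:autdi}.

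For the sufficiency direction, assume $k^2 \equiv 1 \pmod n$. Then $\alpha$ and $\gamma$ are both automorphisms and, crucially, $\gamma^2 = \mathrm{id}$ (indeed $\gamma^2(u_i) = u_{k^2 i} = u_i$); note that $\gamma^2 = \mathrm{id}$ is consistent with $\gamma\alpha\gamma^{-1} = \alpha^k$ precisely because $k^2 \equiv 1$. I would set $H = \langle \alpha, \gamma\rangle$. Since $\langle\alpha\rangle \cong \Z_n$ is normalized by $\gamma$ and $\gamma \notin \langle\alpha\rangle$, the group $H$ has order $2n$; as $\alpha$ is transitive on each rim and $\gamma$ interchanges the two rims (e.g.\ $\gamma(u_0)=v_0$), $H$ is transitive on $V$. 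A transitive group whose order equals the number of points is regular, so $G(n,k) = \Cay(H, \{\alpha,\gamma\})$ is a group graph. This argument is uniform and in particular covers the exceptional pairs $(4,1),(8,3),(12,5),(24,5)$ with $k^2 \equiv 1$, since $\alpha,\gamma$ remain automorphisms there regardless of the extra symmetries.

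For necessity, a group graph is vertex-transitive, so Corollary~\ref{cor:autdi} forces $k^2 \equiv \pm 1 \pmod n$ or $(n,k) = (10,2)$, and it remains to exclude $k^2 \equiv -1$ and the dodecahedron. Assume $k^2 \equiv -1$ and $(n,k)$ non-exceptional, so $\Aut(G(n,k)) = \langle\alpha,\gamma\rangle$ has order $4n$ with $\gamma^4 = \mathrm{id}$ and $\gamma^2 = \beta$ (since $\gamma^2(u_i) = u_{k^2 i} = u_{-i}$). The orbit-preserving elements form the index-two subgroup $D = \langle\alpha,\beta\rangle = D_n$, which is intransitive. Suppose $H$ is regular of order $2n$; then $H \not\subseteq D$, so $H$ contains an orbit-swapping element $g = \alpha^i\gamma^{\pm 1}$, and $H \cap D$ is an index-two, hence order-$n$, subgroup of $D_n$. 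A direct computation gives $g^2 = \alpha^{i(1+k)}\beta \in H\cap D$, an element that is a reflection (not a rotation); in particular $H\cap D \neq \langle\alpha\rangle$. I would then split on the parity of $n$. If $n$ is odd, $\langle\alpha\rangle$ is the only index-two subgroup of $D_n$, contradicting that $H\cap D$ contains a reflection. If $n$ is even (so $n \equiv 2\pmod 4$ and $k$ is odd), transitivity of $H$ forces $H\cap D$ to be transitive on each rim, which singles out the dihedral subgroup $\langle \alpha^2, \alpha\beta\rangle$ whose reflections are exactly the $\alpha^{\mathrm{odd}}\beta$; but $1+k$ is even, so $g^2 = \alpha^{\mathrm{even}}\beta \notin \langle\alpha^2,\alpha\beta\rangle$, again a contradiction. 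Hence no regular subgroup exists when $k^2\equiv -1$.

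The remaining cases I would treat by hand using their (larger) automorphism groups: the exceptional graphs $G(5,2)$ (Petersen, $\Aut \cong S_5$) and $G(10,3)$ (Desargues, $\Aut \cong S_5\times\Z_2$) with $k^2\equiv -1$, and the dodecahedron $G(10,2)$ ($\Aut \cong A_5\times\Z_2$). In each case a short inspection shows there is no subgroup of order $|V|$ acting regularly: for instance $S_5$ has no element of order $10$ and its order-$10$ subgroups act on the ten $2$-subsets with two orbits, while $A_5$ has no subgroup of order $20$, so each order-$20$ subgroup of $A_5\times\Z_2$ (such as $D_{10}\times\Z_2$) must be checked to fail freeness on the $20$ vertices. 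I expect the main obstacle to be exactly this necessity analysis for $k^2\equiv -1$: organizing the index-two subgroups of $D_n$ and pinning down the parity obstruction to a reflection lying in the correct subgroup is the delicate group-theoretic heart of the argument, whereas sufficiency and the vertex-transitivity reduction are comparatively immediate.
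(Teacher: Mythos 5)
First, a point of reference: the paper does not prove Theorem~\ref{thm:cayleygroup} at all -- it is quoted from \cite{Ned-95,Lov-97} -- so there is no internal proof to compare against; your proposal has to be judged against the cited literature, whose strategy (Sabidussi's regular-subgroup criterion combined with the Frucht--Graver--Watkins description of $\Aut(G(n,k))$) is essentially the one you adopt. The core of your argument is correct. Sufficiency is fine: for $k^2\equiv 1 \ ({\rm mod}\ n)$ the maps $\alpha,\gamma$ are automorphisms with $\gamma^2={\rm id}$ and $\gamma\alpha\gamma=\alpha^k$, so $\langle\alpha,\gamma\rangle$ has order $2n$ and is transitive (note $\gcd(k,n)=1$, so the vertices $v_{ki}$ exhaust $V_I$), hence regular; and, as you say, this is insensitive to the exceptional automorphism groups. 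The necessity reduction via Corollary~\ref{cor:autdi} and your analysis of the non-exceptional case $k^2\equiv-1$ are also sound: $H\cap D$ is an order-$n$ subgroup of $D=\langle\alpha,\beta\rangle\cong D_n$ acting regularly on each rim yet containing the reflection $g^2$; for $n$ odd the only order-$n$ subgroup is $\langle\alpha\rangle$, and for $n\equiv 2\ ({\rm mod}\ 4)$, $k$ odd, regularity on the rims leaves only $\langle\alpha\rangle$ and $\langle\alpha^2,\alpha\beta\rangle$, whose reflections all have odd exponent while $g^2$ has even exponent. (One computational slip: for $g=\alpha^i\gamma^{-1}$ one gets $g^2=\alpha^{i(1-k)}\beta$, not $\alpha^{i(1+k)}\beta$; since $k$ is odd this does not affect the parity argument.)

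The genuine gap is the exceptional graphs, which is exactly where the generic description of $\Aut(G(n,k))$ fails and where the real content of the necessity direction lies. Your reduction leaves three graphs to eliminate: $G(5,2)$, $G(10,3)$, and $G(10,2)$. The Petersen case is adequately handled (all order-$10$ subgroups of $S_5$ are dihedral and have two orbits on $2$-subsets). But for the Desargues graph you give no argument, and for the dodecahedron you explicitly write that freeness ``must be checked'' -- that check is the theorem in those cases, not a formality. Concretely, for $G(10,3)$ one must classify the order-$20$ subgroups of $S_5\times\Z_2$: up to conjugacy they are $D_5\times\Z_2$, $F_{20}\times\{1\}$ (where $F_{20}$ is the normalizer of a Sylow $5$-subgroup of $S_5$), and the diagonal copy of $F_{20}$ twisted by its unique surjection onto $\Z_2$. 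Viewing $G(10,3)$ as the Kronecker cover of the Petersen graph, the first and third contain an element of the form $\bigl((25)(34),1\bigr)$ fixing the vertex $(\{2,5\},0)$, while the second preserves the two bipartition classes and is therefore intransitive; so none is regular. For $G(10,2)$ one must show that every order-$20$ subgroup of $A_5\times\Z_2$ equals $D_5\times\Z_2$ for some $D_5\leq A_5$ (using that $A_5$ has no subgroup of order $20$ and no element of order $10$), and then that such a subgroup contains a plane reflection (an edge $\pi$-rotation composed with the central inversion), which fixes four vertices of the dodecahedron. Until these verifications are carried out, your proof is incomplete precisely for the three graphs that are hardest to exclude.
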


In the first part of this section we show that most generalized Petersen graphs that are cores, cannot be semigroup graphs unless $\Cay(S,C)$ has loops.
Together with Theorem~\ref{thm:cores} this gives an infinite such family and in particular by choosing any vertex transitive (multi)orientation yields vertex-transitive digraphs, that are not directed Cayley graphs of a semigroup. This answers a question of~\cite[Question 6.6]{garcamarco2019cayley} and strengthens a result of~\cite{Kho-21} for monoids.

In the second part of this section, we show for several generalized Petersen graphs that they are monoid graphs. Answering a question on \emph{mathoverflow} whether the Petersen graphs was Cayley graph of a group-like structure~\cite{390161}, we present four different ways to represent the Petersen graph as a Cayley graph (Proposition~\ref{prop:Petersen}). Furthermore, we show that the Kronecker cover of the Petersen graph -- the Desargues graph is the underlying graph of a monoid Cayley graph (Proposition~\ref{prop:Desargues}). Further, we answer a part of a question of~\cite[Question 4]{Kna-16} (see also~\cite[Section 13.1]{Kna-19}).  The graphs of all
Platonic solids are known to be group graphs with the sole exception of the dodecahedron, and it was asked whether it is a semigroup graph. In Proposition~\ref{pr:dodeca} we provide a positive answer, indeed, we prove that the dodecahedron $G(10,2)$ is a monoid graph ${\rm Cay}(M,C)$, where the connection set $C$ has $3$ elements and minimally generates $M$.
Finally, we show an infinite family generalized Petersen graphs, that are monoid graphs (Theorem~\ref{thm:Cay1}). Indeed, the used monoids are orthogroups, i.e. close to groups.
Apart from the above mentioned group graph characterization, the only results into this direction so far have been by Hao, Gao, and Luo~\cite{zbMATH05973394,zbMATH06093204} who show that every generalized Petersen graph appears as a certain subgraph of the Cayley graph of a symmetric inverse semigroup as well as a Brandt semigroup. However, these results have been improved recently by showing that every (directed) graph is a connected component of a monoid Cayley graph~\cite{Kna-21}.

In the last part of this section we show characterize generalized Petersen graphs that are monoid graphs with respect to a generating set of size $2$ (Theorem~\ref{thm:mon2gen}), and provide several properties and a conjecture about generalized Petersen graphs that are monoid graphs with respect to a connection set of size $2$.

\subsection{Cores and loopless semigroup graphs}

\begin{lemma} \label{lem:leftinCayley}
Let $D=\Cay(S,C)$ be a Cayley graph, then left multiplication by $S$ 
yields a homomorphism from $S$ to a subsemigroup of $\End(D)$.  
\end{lemma}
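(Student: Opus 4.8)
The plan is to exhibit the homomorphism explicitly. For each $t \in S$ define the left-translation map $\lambda_t \colon S \to S$ by $\lambda_t(s) = ts$, where I regard $S$ as the vertex set of $D = \Cay(S,C)$. I will verify two things: first, that each $\lambda_t$ lies in $\End(D)$; and second, that the assignment $t \mapsto \lambda_t$ respects the semigroup operation. Once both are established, the image $\{\lambda_t \mid t \in S\}$ is automatically closed under composition and is therefore a subsemigroup of $\End(D)$, which is exactly the assertion of the lemma.

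For the first step, the only thing to check is that $\lambda_t$ preserves arcs. By the definition of the Cayley graph, every arc of $D$ has the shape $(s, sc)$ for some $s \in S$ and some $c \in C$. Applying $\lambda_t$ produces the pair $(ts, tsc) = (ts, (ts)c)$, which is again an arc of $D$, namely the arc leaving the vertex $ts$ that corresponds to the very same connection element $c$. Hence $\lambda_t$ sends arcs to arcs and is an endomorphism of the directed looped multigraph $D$. It is essential here that $D$ is permitted to carry loops: when $S$ is not left-cancellative one may have $ts = tsc$ even though $s \neq sc$, so the image of an arc may be a loop; this is harmless precisely because $D$ is a looped graph and no arc is ever lost.

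For the second step, associativity in $S$ gives, for all $s,t,t' \in S$,
\[
\lambda_{tt'}(s) = (tt')s = t(t's) = \lambda_t(\lambda_{t'}(s)),
\]
so that $\lambda_{tt'} = \lambda_t \circ \lambda_{t'}$. Thus $t \mapsto \lambda_t$ is a semigroup homomorphism from $S$ into $(\End(D),\circ)$, and its image is the desired subsemigroup. I do not expect any genuine obstacle: the argument is a direct verification. The only point that demands a little care is the bookkeeping of conventions, since the arcs of a \emph{right} Cayley graph are created by right multiplication, so it is \emph{left} multiplication that commutes with them and yields endomorphisms, whereas right multiplication would not. Keeping the directed looped multigraph (rather than the underlying simple graph) as the object whose endomorphisms we consider is what lets the first step go through with no cancellativity hypothesis on $S$.
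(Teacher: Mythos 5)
Your proof is correct and follows essentially the same approach as the paper: defining left multiplication $\lambda_t$, verifying it sends the arc $(s,sc)$ to the arc $(ts,(ts)c)$ via associativity, and noting that $t \mapsto \lambda_t$ is a homomorphism again by associativity. Your additional remarks about loops absorbing non-cancellativity and about the left/right convention are sound elaborations of the same argument.
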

\begin{proof}
Let $\varphi: S \longrightarrow \End(D)$ defined as $\varphi(s): S \longrightarrow S$, where $\varphi(s)(s') = s \cdot s'$. 
Let $s \in S$, it is easy to see that $\varphi(s)$ is an endomorphism of $D$. Indeed, if $(u,v)$ is an arc of $D$, then  $v = u \cdot c$ for some $c \in C$. As a consequence $s \cdot v = s \cdot (u \cdot c) = (s \cdot u) \cdot c$ and, then, there is an arc from $s \cdot u$ to $s \cdot v$.
Moreover, we have that $\varphi(s \cdot s') = \varphi(s) \circ \varphi(s')$ because $S$ is associative.
\end{proof}

\begin{lemma}\label{lem:core}
If a core $G$ without four-cycles is a semigroup graph $\Cay(S,C)$, then $\Cay(S,C)$ has loops or $S$ is a group. 
\end{lemma}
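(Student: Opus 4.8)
The plan is to assume that $\Cay(S,C)$ has no loops and to deduce that $S$ is a group. The starting point is Lemma~\ref{lem:leftinCayley}: left multiplication gives a homomorphism $\varphi\colon S\to\End(\Cay(S,C))$, $\varphi(s)\colon s'\mapsto ss'$. First I would note that, thanks to the no-loops hypothesis, each $\varphi(s)$ is in fact an endomorphism of the underlying \emph{simple} graph $G$: the only way it could fail is by collapsing an edge to a loop, i.e. by having $su=sv$ for adjacent $u,v$; writing $v=uc$ this forces $su=(su)c$, a loop at $su$, which is excluded. Hence every $\varphi(s)$ lies in $\End(G)$, and since $G$ is a core, in $\Aut(G)$. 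In particular each left translation $L_s\colon s'\mapsto ss'$ is a bijection of $S$. Thus $S$ is left cancellative (each $L_s$ injective) and right simple (each $L_s$ surjective, i.e. $sS=S$), which is precisely the definition of a \emph{right group}. By the classical structure theorem for right groups I may write $S\cong G_0\times R$, where $G_0$ is a group and $R$ is a right-zero semigroup (so $r_1r_2=r_2$); the goal then reduces to proving $|R|=1$. In these coordinates an arc leaves $(g,r)$ towards $(g,r)(g_c,r_c)=(gg_c,r_c)$, so the target's second coordinate is always overwritten and a loop occurs exactly when some $c=(g_c,r_c)$ has $g_c=e_{G_0}$. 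The no-loops hypothesis therefore says that every $c\in C$ has $g_c\neq e_{G_0}$, which also forces $|G_0|\ge 2$ (as $C\neq\varnothing$).

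The heart of the argument, and the step I expect to be the main obstacle, is ruling out $|R|\ge2$ using four-cycle-freeness together with coreness. Suppose $|R|\ge2$ and fix $r\neq r'$ in $R$. For every $g\in G_0$ the distinct vertices $(g,r)$ and $(g,r')$ have the same out-neighbourhood $\{(gg_c,r_c)\colon c\in C\}$, since the source's second coordinate is irrelevant. If $|C|\ge2$, pick $c\neq c'$: their targets $(gg_c,r_c)$ and $(gg_{c'},r_{c'})$ are distinct and, using $g_c,g_{c'}\neq e_{G_0}$, both differ from $(g,r)$ and $(g,r')$; hence these four vertices form a four-cycle, a contradiction. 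If instead $|C|=1$, say $C=\{(g_0,r_0)\}$ with $g_0\neq e_{G_0}$, then every vertex has out-degree one, and every vertex with second coordinate $\neq r_0$ has in-degree zero, hence degree one in $G$; its unique neighbour $(gg_0,r_0)$ has degree at least two, as it collects the in-neighbours $\{(g,s)\colon s\in R\}$. Such a pendant vertex gives a non-injective endomorphism of $G$ (fold it onto another neighbour of its support), so $G$ is not a core, again a contradiction. Either way $|R|=1$ and $S\cong G_0$ is a group.

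The decisive use of the hypotheses is concentrated in the second paragraph: four-cycle-freeness kills the generic case $|C|\ge2$, coreness (via the fact that a core has no pendant vertex attached to a vertex of degree at least two) kills the degenerate case $|C|=1$, and the no-loops assumption is exactly what guarantees $g_c\neq e_{G_0}$ and thereby prevents the putative four-cycle from degenerating. I would also double-check that the right-group structure theorem is invoked in a form valid for arbitrary, not necessarily finite, $S$; this is fine, since right simplicity together with left cancellativity already yields a left identity $e$ with $ex=x$ and a right inverse for every element, from which the decomposition $S\cong G_0\times R$ follows.
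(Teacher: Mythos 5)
Your proof is correct, and it takes a genuinely different route from the paper's. The two arguments share their endpoints: both start from Lemma~\ref{lem:leftinCayley}, using the absence of loops to make every left translation $\lambda_s$ an endomorphism of the simple graph $G$ and coreness to make it an automorphism, and both derive the final contradiction from a four-cycle spanned by two distinct vertices having identical left translations together with two distinct connection elements. The algebraic middle differs. The paper argues that if $S$ is not a group, then some $s\neq t$ must share the same left translation -- implicitly because otherwise $s\mapsto\lambda_s$ embeds $S$ into the finite group $\Aut(G)$, and a cancellative subsemigroup of a finite group is a group -- and then $s,\ sc=tc,\ t,\ td=sd$ is a four-cycle; the case $|C|\leq 1$ is dispatched separately by classifying the possible underlying graphs (unions of pseudo-trees, cycles, edges). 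You instead observe that bijectivity of all $\lambda_s$ says precisely that $S$ is left cancellative and right simple, i.e., a right group, and invoke the Clifford--Preston structure theorem $S\cong G_0\times R$ with $R$ right-zero; the paper's pair $s,t$ then materializes concretely as $(g,r),(g,r')$ with $r\neq r'$, your four-cycle for $|C|\geq 2$ coincides with the paper's, and your pendant-vertex fold replaces the paper's ad hoc treatment of $|C|=1$. Your route buys independence of finiteness: every step, including the structure theorem, is valid for infinite $S$, whereas the paper's pigeonhole step needs $S$ finite (or else exactly your right-group argument), even though the lemma is stated for general $G$. The paper's route buys brevity and self-containedness, requiring no semigroup structure theory. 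Two small touch-ups for your write-up: handle $C=\varnothing$ explicitly (then $G$ is edgeless, so coreness forces $|S|=1$ and $S$ is the trivial group), and when you assert that a loop occurs exactly when some $g_c=e_{G_0}$, say where it sits (such a $c$ puts a loop at every vertex of the form $(g,r_c)$), so that the equivalence is transparent.
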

\begin{proof}
 Suppose $D=\Cay(S,C)$ has no loops and $S$ is not a group. If $|C|\leq 1$, then $G$ either is a disjoint union of edges or pseudo-trees and not a core, or it is an odd cycle or an edge, in this case $S$ is the cyclic group of order $|V(D)|$ -- contradiction. Thus, $|C|\geq 2$. By Lemma~\ref{lem:leftinCayley} left multiplication by $S$ yields a homomorphism from $S$ to a subsemigroup of $\End(D)$. But Since $D$ has no loops, the latter equals $\Aut(D)$ since $G$ is a core. Thus, if $S$ is not a group, then two elements of $s,t\in S$ must have the same left-multiplication. Since left-multiplication is an automorphism of $D$, for any distinct $c,d\in C$, we have that $tc\neq td$. Thus, $s,sc=tc,t,td=sd$ is a four-cycle -- contradiction.
\end{proof}

\begin{corollary}\label{cor:petnotcayley}
If $G(n,k)$ is a core and $n \neq 4k$, then if $G(n,k)$ is a semigroup graph $Cay(S,C)$, then the latter has loops or $S$ is a group.
\end{corollary}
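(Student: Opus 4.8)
The plan is to reduce the statement to Lemma~\ref{lem:core}, whose only hypothesis not already supplied by the present corollary is that the graph contains no four-cycle. Since $G(n,k)$ is assumed to be a core, it cannot be an $n$-prism $G(n,1)$ (prisms are not cores by Lemma~\ref{lem:genprismnocore}, or by Corollary~\ref{cor:primos}), so we automatically have $k \geq 2$. Hence the whole proof comes down to establishing the following: if $k \geq 2$ and $n \neq 4k$, then $G(n,k)$ has no four-cycle. Once this is known, $G(n,k)$ is a core without four-cycles, Lemma~\ref{lem:core} applies directly, and any semigroup representation $\Cay(S,C)$ of $G(n,k)$ must either have loops or have $S$ a group.

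To prove the four-cycle-free claim I would enumerate four-cycles by the number of spokes they use. Because the spokes $E_S(n,k)$ form an edge cut separating $V_I$ from $V_O$, every cycle uses an even number of spokes, so a four-cycle uses $0$, $2$, or $4$ spokes. The case of four spokes is immediately impossible: the spokes form a perfect matching, so no vertex is incident to two of them and a cycle cannot consist only of spokes. For zero spokes, the four-cycle lies entirely within the outer rim or within a single inner rim; but the subgraph induced by $V_O$ is the induced cycle $C_n$ and the subgraph induced by $V_I$ is a disjoint union of induced cycles $C_{n/d}$ with $d:=\gcd(n,k)$, and the only cycle-subgraph of an induced cycle is the cycle itself. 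Thus a zero-spoke four-cycle forces $n=4$ or $n/d=4$. The first yields $(n,k)=(4,1)$, excluded since $k\geq 2$; the second, using $d\mid k$ and $0<k<n/2$, forces $k=d=n/4$, i.e.\ $n=4k$, also excluded.

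It remains to rule out four-cycles with exactly two spokes. In such a cycle the two spokes cannot be consecutive (they would share a vertex, contradicting that spokes form a matching), so the cycle has the cyclic form $u_i,v_i,v_j,u_j$ with $i\neq j$, spokes $u_iv_i$ and $u_jv_j$, and the remaining two edges forming a perfect matching on $\{u_i,v_i,u_j,v_j\}$ disjoint from the spokes. The only such matching consisting of genuine edges is $\{u_iu_j,\,v_iv_j\}$ (the ``crossing'' matching $\{u_iv_j,\,u_jv_i\}$ uses non-edges, as $u_av_b$ is an edge only when $a=b$). Now $u_iu_j\in E_O(n,k)$ forces $j\equiv i\pm 1\pmod n$ and $v_iv_j\in E_I(n,k)$ forces $j\equiv i\pm k\pmod n$, whence $\pm 1\equiv \pm k\pmod n$; under $0<k<n/2$ this gives $k=1$, again excluded. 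This exhausts all cases and proves the claim.

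The main obstacle is precisely this final two-spoke analysis: one must carefully confirm that the two non-spoke edges are forced to be one outer and one inner edge (eliminating the crossing matching of non-edges), and then correctly extract $k=1$ from $\pm 1\equiv\pm k\pmod n$ using the standing constraint $0<k<n/2$. The rest is the routine bookkeeping matching the excluded value $n=4k$ to the regime in which the inner rims degenerate into four-cycles.
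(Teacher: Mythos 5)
Your proposal is correct and follows essentially the same route as the paper: reduce to Lemma~\ref{lem:core} by showing $G(n,k)$ has no four-cycles, then rule out four-cycles by cases -- the paper's ``mixed inner/outer'' case is your two-spoke case (forcing $k=1$, excluded since prisms are not cores), and its ``only inner or only outer'' case is your zero-spoke case (forcing $n=4k$). Your write-up is merely a more detailed, spoke-counting organization of the same argument.
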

\begin{proof}
 It suffices to show that such graphs have no four-cycles. The rest follows from Lemma~\ref{lem:core}. So, assume that $G(n,k)$ has a four-cycle. If it involves inner and outer vertices then it is an $n$-prism and, thus, not a core. If it only involves inner or outer vertices, then we have that $4 = n / \gcd(n,k)$ and, since $0 < k < n/2$, then $n = 4k$. 
\end{proof}

Corollary~\ref{cor:petnotcayley} provides us with an infinite family of negative instances of~\cite[Question 6.6]{garcamarco2019cayley}:
\begin{corollary}\label{cor:manybad}
 There are infinitely many vertex-transitive digraphs, that are not the Cayley digraph of a semigroup. %More precisely, if $(n,k) = (10,2)$, or if  $n$ is an odd integer and $k^2 \equiv -1 \mod n$, then  replacing each edge by two oppositely oriented arcs we get a vertex-transitive digraph which is not the Cayley digraph of a semigroup. 
\end{corollary}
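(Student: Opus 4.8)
The plan is to exhibit the required digraphs as complete biorientations of suitable transitive generalized Petersen graphs. First I would produce an infinite supply of pairs $(n,k)$ with $k^2 \equiv -1 \ ({\rm mod}\ n)$: by Dirichlet's theorem there are infinitely many primes $n \equiv 1 \ ({\rm mod}\ 4)$, and for each such $n$ the congruence $k^2 \equiv -1 \ ({\rm mod}\ n)$ is solvable; replacing $k$ by $n-k$ if necessary, I may assume $0 < k < n/2$, and $k \neq 1$ since $n > 2$ (alternatively one may take $n = 5^m$). For each such $(n,k)$ the identity $k \cdot k \equiv -1 \ ({\rm mod}\ n)$ forces $\gcd(n,k) = 1$, and since $n$ is odd the graph $G(n,k)$ is non-bipartite; as $k \neq 1$, Corollary~\ref{cor:primos} shows that $G(n,k)$ is a core. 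Moreover $G(n,k)$ is transitive by Corollary~\ref{cor:autdi}, and $n \neq 4k$ because $n$ is odd while $4k$ is even.

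Next, for each such $(n,k)$ I would let $D$ be the digraph obtained from $G(n,k)$ by replacing every edge with a pair of oppositely directed arcs. This $D$ is loopless, its underlying graph is $G(n,k)$, and every automorphism of $G(n,k)$ is an automorphism of $D$; since $\Aut(G(n,k))$ acts transitively on the vertices, $D$ is a vertex-transitive digraph. As the graphs $G(n,k)$ range over the infinite family above, the digraphs $D$ have pairwise distinct vertex counts $2n$ and are therefore pairwise non-isomorphic, so it suffices to show that no such $D$ is the Cayley digraph of a semigroup.

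Finally, suppose for contradiction that $D = \Cay(S,C)$ for some semigroup $S$. Then $G(n,k)$ is the underlying graph of $\Cay(S,C)$ and this representation is loopless; since $G(n,k)$ is a core with $n \neq 4k$, Corollary~\ref{cor:petnotcayley} forces $S$ to be a group. But then $G(n,k)$ is a group graph, so Theorem~\ref{thm:cayleygroup} gives $k^2 \equiv 1 \ ({\rm mod}\ n)$, contradicting $k^2 \equiv -1 \ ({\rm mod}\ n)$ as $n$ is odd and greater than $2$. The only genuine point requiring care is the bookkeeping of the first step --- verifying simultaneously that the chosen $(n,k)$ yield cores, are transitive, satisfy $n \neq 4k$, and fail $k^2 \equiv 1 \ ({\rm mod}\ n)$ --- after which the conclusion is immediate from the quoted results.
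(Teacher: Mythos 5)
Your proof is correct and takes essentially the same approach as the paper: you biorient the transitive core generalized Petersen graphs $G(n,k)$ with $n$ odd and $k^2 \equiv -1 \ ({\rm mod}\ n)$, and rule out a semigroup representation via the core/no-four-cycle argument (you cite Corollary~\ref{cor:petnotcayley}, while the paper invokes Lemma~\ref{lem:core} directly, which amounts to the same thing). Your explicit Dirichlet-style construction of infinitely many such pairs $(n,k)$ and the remark about distinct vertex counts simply fill in details the paper leaves implicit.
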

\begin{proof} Take any graph such that: it is transitive, it is a core, has no $4$-cycles, and is not a group graph. Now, consider a biorientation of it, i.e., replace each edge by two oppositely oriented arcs. This digraph is vertex transitive and, by Lemma~\ref{lem:core}, it is not the directed Cayley graph of a semigroup. Let us see that there are infinite graphs satisfying these conditions within the family of generalized Petersen graphs.

For $(n,k) = (10,2)$ we have that $G(10,2)$ is transitive (Corollary~\ref{cor:autdi}), it is a core (Theorem~\ref{thm:cores}), has no $4$-cycles and is not a group graph (Theorem~\ref{thm:cayleygroup}).

If $n$ is odd and $k^2 \equiv -1\ ({\rm mod}\ n)$, we have that $G(n,k)$ is transitive (Corollary~\ref{cor:autdi}), it is not bipartite (because $n$ is odd) and $\gcd(n,k) = 1$ , then it is a core (Corollary~\ref{cor:primos}), if has no $4$-cycles (because $n \neq 4k$) and is not a group graph (Theorem~\ref{thm:cayleygroup}). This is an infinite family.
\end{proof}

Clearly, the digraphs above have arcs in both directions and one could wonder whether this is essential for such a construction. A vertex-transitive digraph has at each vertex the same outdegree which also equals the indegree. Thus, if we want to have an example without multiple arcs, its underlying undirected graph has to be regular of even degree and thus cannot be found among generalized Petersen graphs. We believe, however that such graphs should be easy to find as well.

\subsection{Positive results}
In this section we study generalized Petersen graphs that are underlying graphs of Cayley graphs of semigroups or monoids. %We exclude the case of groups, which is studied in Theorem~\ref{thm:cayleygroup}.

Let us start with four semigroup representations of the Petersen graph $G(5,2)$. The semigroups $S,M,S',M'$ are given in Table~\ref{tab:Pet}. They yield the Petersen graph as their Cayley graph as depicted in Figure~\ref{fig:petersen}, where also the connection sets are specified. Both $S,M$ are unions of $\mathbb{Z}_6$ and the \emph{null semigroup} $N_{[6,9]}$, i.e., $ab=9$ for all $a,b\in \{6,\ldots, 9\}$. Moreover, $M$ is a monoid with neutral element $0$. %with the property that  $ax=bx$ for all $x$ implies $a=b$, i.e., it is left-faithful, see e.g~\cite[Definition 1.4.8]{Kil-00}.
Similarly, $S',M'$ are unions of the dihedral group $D_3$ of order $6$ and the \emph{null semigroup} $N_{[6,9]}$ and $M'$ is a monoid with neutral element $5$.

\begin{table}
\parbox{.45\linewidth}{
\centering
\begin{tabular}{l|cccccccccc } 
 
 $S$&0&1&2&3&4&5&6&7&8&9 \\ 
 \hline
 0&0&1&2&3&4&5&6&7&8&9 \\ 
 1&1&2&3&4&5&0&7&8&6&9 \\ 
 2&2&3&4&5&0&1&8&6&7&9 \\ 
 3&3&4&5&0&1&2&6&7&8&9 \\ 
 4&4&5&0&1&2&3&7&8&6&9 \\ 
 5&5&0&1&2&3&4&8&6&7&9 \\ 
 6&9&9&9&9&9&9&9&9&9&9 \\ 
 7&9&9&9&9&9&9&9&9&9&9 \\ 
 8&9&9&9&9&9&9&9&9&9&9 \\ 
 9&9&9&9&9&9&9&9&9&9&9 \\ 
\end{tabular}\label{tab:Pet1}
}
\hfill
\parbox{.45\linewidth}{
\centering

\begin{tabular}{l|cccccccccc } 
 
 $M$&0&1&2&3&4&5&6&7&8&9 \\ 
 \hline
 0&0&1&2&3&4&5&6&7&8&9 \\ 
 1&1&2&3&4&5&0&7&8&6&9 \\ 
 2&2&3&4&5&0&1&8&6&7&9 \\ 
 3&3&4&5&0&1&2&6&7&8&9 \\ 
 4&4&5&0&1&2&3&7&8&6&9 \\ 
 5&5&0&1&2&3&4&8&6&7&9 \\ 
 6&6&6&6&6&6&6&9&9&9&9 \\ 
 7&7&7&7&7&7&7&9&9&9&9 \\ 
 8&8&8&8&8&8&8&9&9&9&9 \\ 
 9&9&9&9&9&9&9&9&9&9&9 \\ 
\end{tabular}\label{tab:Pet2}
}
\end{table}

\begin{table}
\parbox{.45\linewidth}{
\centering
\begin{tabular}{l|cccccccccc } 
 
 $S'$&0&1&2&3&4&5&6&7&8&9 \\ 
 \hline
 0&5&4&3&2&1&0&8&7&6&9 \\ 
 1&2&3&4&5&0&1&8&6&7&9 \\ 
 2&1&0&5&4&3&2&7&6&8&9 \\ 
 3&4&5&0&1&2&3&7&8&6&9 \\ 
 4&3&2&1&0&5&4&6&8&7&9 \\ 
 5&0&1&2&3&4&5&6&7&8&9 \\ 
 6&9&9&9&9&9&9&9&9&9&9 \\ 
 7&9&9&9&9&9&9&9&9&9&9 \\ 
 8&9&9&9&9&9&9&9&9&9&9 \\ 
 9&9&9&9&9&9&9&9&9&9&9 \\ 
\end{tabular}\label{tab:Pet4}

}
\hfill
\parbox{.45\linewidth}{
\centering
\begin{tabular}{l|cccccccccc } 
 
 $M'$&0&1&2&3&4&5&6&7&8&9 \\ 
 \hline
 0&5&4&3&2&1&0&8&7&6&9 \\ 
 1&2&3&4&5&0&1&8&6&7&9 \\ 
 2&1&0&5&4&3&2&7&6&8&9 \\ 
 3&4&5&0&1&2&3&7&8&6&9 \\ 
 4&3&2&1&0&5&4&6&8&7&9 \\ 
 5&0&1&2&3&4&5&6&7&8&9 \\ 
 6&6&6&6&6&6&6&9&9&9&9 \\ 
 7&7&7&7&7&7&7&9&9&9&9 \\ 
 8&8&8&8&8&8&8&9&9&9&9 \\ 
 9&9&9&9&9&9&9&9&9&9&9 \\ 
\end{tabular}\label{tab:Pet3}
}
\caption{Different ways to realize the Petersen graph}
\label{tab:Pet}
\end{table}

%\comment{make 0 neutral element of $D_3$ in $S'$ and $M'$}

\begin{figure}[ht]
\begin{center}
\includegraphics[width =.7\textwidth]{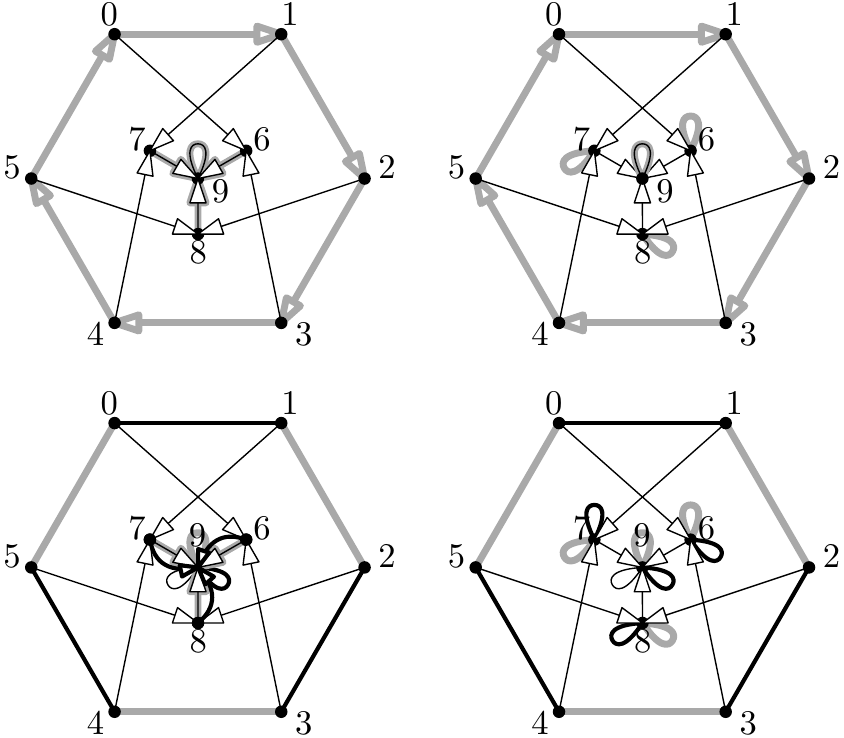}
\caption{From upper left to bottom right: $\Cay(S,\{1,6\})$, $\Cay(M,\{1,6\})$, $\Cay(S',\{0,4,8\})$, $\Cay(M',\{0,4,8\})$.}
\label{fig:petersen}
\end{center}
\end{figure}

Together with Table~\ref{tab:Pet} and Figure~\ref{fig:petersen} we conclude the above discussion:

\begin{proposition}\label{prop:Petersen}
 The Petersen graph $G(5,2)$ is a monoid graph.
\end{proposition}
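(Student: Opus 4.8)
The plan is to prove the statement by an explicit construction, namely to exhibit one of the monoids from Table~\ref{tab:Pet} together with its connection set and to check directly that its Cayley graph underlies the Petersen graph. I would work with the monoid $M$ on $\{0,\dots,9\}$ and the set $C=\{1,6\}$. The first thing to make transparent is the structure of $M$: it is the disjoint union of the group $\Z_6=\{0,\dots,5\}$ and the null semigroup $N_{[6,9]}=\{6,7,8,9\}$ (in which every product is $9$), glued by the rules, all readable off the table, that each $g\in\Z_6$ acts on the left on $I:=\{6,7,8\}\cong\Z_3$ by the translation $i\mapsto i+(g \bmod 3)$, that right multiplication by any group element fixes each element of $I$, that $9$ is a two-sided zero, and that $0$ is a two-sided identity. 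Recording that $0$ is neutral immediately reduces the monoid claim to associativity.

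To verify associativity I would avoid the $10^3$ triples and argue by the type of each factor. Because $9$ is absorbing, any triple containing $9$ gives $9$ on both sides, so it suffices to take $x,y,z\in\Z_6\cup I$ and split according to the pattern in $\{\Z_6,I\}^3$, leaving eight cases. The all-group case is associativity of $\Z_6$; the three cases with exactly one factor from $I$ reduce to the facts that left multiplication by $\Z_6$ is a (right) action on $I$ and that right multiplication by $\Z_6$ acts trivially on $I$; and every case with at least two factors from $I$ evaluates to $9$ on both sides, since any product of two elements of $N_{[6,9]}$ is $9$ and $9$ is absorbing. Each case is a one-line check, so $M$ is a monoid. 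The analogous decomposition (with $D_3$ in place of $\Z_6$) handles $M'$ with $C=\{0,4,8\}$, yielding a second representation.

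It then remains to identify the underlying graph of $\Cay(M,\{1,6\})$. Right multiplication by $1$ produces the $6$-cycle $(0,1,2,3,4,5,0)$ on the group and a loop at each of $6,7,8,9$, while right multiplication by $6$ produces the edges $\{0,6\},\{1,7\},\{2,8\},\{3,6\},\{4,7\},\{5,8\}$ together with $\{6,9\},\{7,9\},\{8,9\}$ and a loop at $9$. After suppressing loops and observing that no two of these edges coincide, one obtains a connected, $3$-regular, multi-edge-free graph on $10$ vertices; a short inspection shows it contains neither a triangle nor a $4$-cycle, so its girth is $5$. Since the Petersen graph is the unique $3$-regular graph of girth $5$ on $10$ vertices (the $(3,5)$-Moore cage), the underlying graph is $G(5,2)$, which proves the proposition.

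The genuinely delicate point is \emph{not} the associativity bookkeeping, which the group-plus-null-semigroup decomposition trivializes, but making this last identification airtight: one must be sure that the two generators create neither a hidden multi-edge nor an unexpected loop collision, and that the girth is exactly $5$, before invoking the Moore-cage uniqueness. The safest way to remove any doubt is to bypass the uniqueness theorem and instead exhibit the explicit bijection between $\{0,\dots,9\}$ and the inner and outer vertices of $G(5,2)$ suggested by Figure~\ref{fig:petersen}, verifying edge by edge that it is a graph isomorphism.
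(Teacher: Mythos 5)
Your proposal is correct and takes essentially the same route as the paper: the paper's proof consists precisely of exhibiting the monoid $M=\Z_6\cup N_{[6,9]}$ from Table~\ref{tab:Pet} (together with the variants $S$, $S'$, $M'$) with connection set $\{1,6\}$ and reading off from Figure~\ref{fig:petersen} that the underlying graph of $\Cay(M,\{1,6\})$ is $G(5,2)$. The details you supply -- the type-based case split proving associativity and the identification of the underlying graph via the uniqueness of the $(3,5)$-cage (or an explicit isomorphism) -- are exactly the verifications the paper leaves to inspection of its table and figure, and they check out.
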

% 
% \begin{remark}\label{rem:unique}
%  An exhaustive computer search using SageMATH~\cite{sage} confirms that the representation $\Cay(M,\{1,6\})$ of the Petersen graph is the only representation with a monoid $M$ and generating set $C$ of order two. %Indeed, even with a left-cancellable semigroup $S$ and two generators no other representation exists.
% \end{remark}

The following is straight-forward and will be useful to show that the Desargues graph is a monoid graph:
\begin{lemma}\label{lem:monoid}
 Let $R=S\dot\cup T$ be a semigroup such that $ST\subseteq T$ and $TS\subseteq T$ and $R'$ another semigroup. The set $R\times R'$ is a semigroup via $(s,i)(r,j)=(sr,i)$ and $(t,i)(r,j)=(tr,ij)$, for all $r\in R, s\in S, t\in T, i,j\in R'$ and the natural multiplication within $R$ and $R'$, respectively. If both $T$ and $R'$ are monoids, then so is the resulting semigroup.
\end{lemma}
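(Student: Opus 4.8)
The plan is to establish the two assertions separately: that the stated operation makes $R\times R'$ a semigroup, and that it is a monoid when $T$ and $R'$ are. First I would note that the operation is well defined, since the two rules are distinguished only by whether the $R$-part of the \emph{left} factor lies in $S$ or in $T$, and $S,T$ partition $R$. Next, I would observe that the $R$-coordinate of any product is simply the product in $R$: both rules send $\big((a,i),(b,j)\big)$ to a pair with $R$-part $ab$. Consequently the $R$-coordinate of a triple product equals $abc$ under either bracketing, and associativity there is inherited from $R$. Thus the entire burden of the proof lies in the $R'$-coordinate.

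For associativity in the $R'$-coordinate I would fix $(a,i),(b,j),(c,k)$ and compare the $R'$-entries produced by the two bracketings, splitting into four cases according to the blocks containing $a$ and $b$. The organizing fact is that the $S/T$-type of a product is determined by the types of its factors: the hypotheses on the decomposition $R=S\cup T$ (with $T$ a subsemigroup) are exactly what make the map $\pi\colon R\to\{1,e\}$ sending $T\mapsto 1$ and the other block to the zero element $e$ of the two-element semilattice a semigroup homomorphism. The $R'$-rule accumulates the trailing $R'$-entry precisely while the leading factor has $\pi$-value $1$, and freezes it as soon as the value is $e$; since $\pi$ is a homomorphism, the two bracketings encounter the same sequence of accumulations and freezes, so each case reduces to an identity in $R'$ that holds by associativity of $R'$. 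The delicate case is the one in which $a$ lies in the freeze block: there the left factor freezes at once, and I would use that this block is an ideal, so that $ab$ remains in it and the second product also freezes, making both bracketings return the entry $i$ unchanged.

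For the monoid statement I would propose $(e_T,e_{R'})$ as identity, with $e_T$ the identity of the monoid $T$ and $e_{R'}$ that of $R'$. Left multiplication by it uses the $T$-rule — legitimate because $e_T\in T$ — and so reproduces rather than discards the $R'$-entry; right multiplication is checked on each block with the appropriate rule. I expect the main obstacle, and the point needing genuine care, to be the $R'$-coordinate bookkeeping: getting the four associativity cases to collapse correctly hinges entirely on aligning block membership with the accumulate-versus-freeze convention through the homomorphism $\pi$. A related subtlety is that the identity check on the freeze block forces $e_T$ to act as a two-sided identity on all of $R$, so the argument really uses that the global identity of $R$ lies in the monoid block $T$.
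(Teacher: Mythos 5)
Your argument is, at bottom, the same proof as the paper's: the paper verifies associativity by writing out the mixed block-membership cases as explicit computations (six of them, dismissing the all-$S$ and all-$T$ cases) and then checks $(e,e')$ directly; you compress the identical case analysis into the single observation that the block-indicator map $\pi$ onto the two-element semilattice is a homomorphism, which is tidier bookkeeping but not a different argument.

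Two points of comparison are worth recording, and both concern the hypotheses rather than the structure of the argument. First, your proof --- exactly like the paper's own --- does not use the inclusions as literally stated. Your $\pi$ (value $1$ on $T$, the zero on $S$) is a homomorphism precisely when $S$ is an ideal and $T$ a subsemigroup, i.e.\ $ST\subseteq S$, $TS\subseteq S$, $SS\subseteq S$, $TT\subseteq T$; the statement instead asserts $ST\subseteq T$ and $TS\subseteq T$. Under the literal hypotheses the operation is not even associative: for $a\in S$, $b\in T$ one has $ab\in T$, so $((a,i)(b,j))(c,k)$ has second coordinate $ik$ while $(a,i)((b,j)(c,k))$ has second coordinate $i$. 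The paper's displayed computations silently use the corrected inclusions (e.g.\ the step $(st,i)(t',k)=(stt',i)$ requires $st\in S$), and so does the application to the Desargues graph, where the ideal block is the null semigroup $N_{[6,9]}$ and the accumulate block is $T=\mathbb{Z}_6$, which contains the identity of $M$. So you have proved exactly the statement the paper actually proves and uses; the roles of $S$ and $T$ in the stated inclusions should be exchanged. Second, your closing remark about the identity is a genuine catch that the paper glosses over: writing $(e,e')(r,j)=(er,e'j)=(r,j)$ tacitly assumes $er=r$ and $re=r$ for \emph{all} $r\in R$, not merely $r\in T$, and ``$T$ is a monoid'' alone does not supply this (take $R=S\,\dot\cup\,\{e\}$ with $S$ a null semigroup with zero $z$, $ee=e$ and $es=se=z$; then $T=\{e\}$ is a monoid, $S$ is an ideal, yet $R\times R'$ has no identity). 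What the lemma really needs is that $R$ possesses an identity element lying in $T$; with the corrected hypotheses this is automatic whenever $R$ is a monoid, since a proper ideal can never contain the identity.
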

\begin{proof}
 We check associativity, where clearly the case where all three elements come from $S\times R'$ or $T\times R'$ , respectively, can be ignored because on these sets we have semigroup structure by hypothesis. The other six cases are straight-forward computations:\small{
 $$((s,i)(t,j))(t',k)=(st,i)(t',k)=(stt',i)=(s,i)(tt',jk)=(s,i)((t,j)(t',k)),$$
 $$((t,i)(s,j))(t',k)=(ts,ij)(t',k)=(tst',ij)=(t,i)(st',j)=(t,i)((s,j)(t',k)),$$
 $$((t,i)(t',j))(s,k)=(tt',ij)(s,k)=(tt's,ijk)=(t,i)(t's,jk)=(t,i)((t',j)(s,k)),$$
 $$((t,i)(s,j))(s',k)=(ts,ij)(s',k)=(tss',ij)=(t,i)(ss',j)=(t,i)((s,j)(s',k)),$$
 $$((s,i)(t,j))(s',k)=(st,i)(s',k)=(sts',i)=(s,i)(ts',jk)=(s,i)((t,j)(s',k)),$$
 $$((s,i)(s',j))(t,k)=(ss',i)(t,k)=(ss't,i)=(s,i)(s't,j)=(s,i)((s',j)(t,k)).$$}
 
 Finally, if $e\in T$ and $e'\in R'$ are neutral elements, we clearly have $(e,e')(r,j)=(er,e'j)$, since $e\in T$. furthermore $(r,j)(e,e')=(r,j)$ independently of whether $r\in S$ or $r\in T$.
\end{proof}

\begin{proposition}\label{prop:Desargues}
 The Desargues graph $G(10,3)$ is a monoid graph.
\end{proposition}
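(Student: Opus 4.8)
The plan is to obtain the Desargues graph as the Kronecker (bipartite double) cover of the Petersen graph at the level of Cayley graphs, by lifting the Petersen monoid along a $\mathbb{Z}_2$-coordinate. Recall that $G(10,3)$ is precisely the Kronecker cover of $G(5,2)$, so it suffices to produce a \emph{monoid} $N$ of order $20$ together with a connection set whose underlying Cayley graph is this cover. The monoid $N$ will be manufactured by Lemma~\ref{lem:monoid}, combining the Petersen monoid $M=\mathbb{Z}_6\,\dot\cup\,N_{[6,9]}$ of Proposition~\ref{prop:Petersen} with $R'=\mathbb{Z}_2$; here the group part $\mathbb{Z}_6$ and the ideal $N_{[6,9]}$ are meant to play the roles of $S$ and $T$, the $\mathbb{Z}_2$-coordinate records the sheet of the cover, and $|N|=2\cdot 10=20$. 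As connection set I would take a lift of the Petersen connection set $\{1,6\}$ of Figure~\ref{fig:petersen}, e.g.\ $\{(1,1),(6,1)\}$.

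Before verifying anything I would make the target explicit. Writing the Petersen graph as $\mathrm{Cay}(M,\{1,6\})$, its Kronecker cover has vertex set $\{0,\dots,9\}\times\mathbb{Z}_2$, with $(m,\varepsilon)$ adjacent to $(m',\varepsilon+1)$ exactly when $m,m'$ are adjacent in the Petersen graph. The reason the naive \emph{direct} product $M\times\mathbb{Z}_2$ fails, and the reason a twisted product such as Lemma~\ref{lem:monoid} is needed, is that the Cayley \emph{digraph} of $M$ carries loops at the ideal elements (indeed $t\cdot 1=t$ for $t\in\{6,7,8,9\}$ and $9\cdot 6=9$); under a direct product each such loop would flip the $\mathbb{Z}_2$-coordinate and introduce a spurious edge $\{(m,0),(m,1)\}$ absent from the double cover. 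The twisted multiplication of Lemma~\ref{lem:monoid}, in which the ideal part $T$ interacts with the $R'$-coordinate differently from the group part $S$, is designed precisely so that these loops remain loops; after suppressing loops and merging parallel edges one is then left with a $3$-regular graph.

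The remaining work splits into three checks. First, that the hypotheses of Lemma~\ref{lem:monoid} are genuinely arranged so that the output $N$ is a \emph{monoid} and not merely a semigroup: this is the one delicate algebraic point, since $N_{[6,9]}$ is not itself a monoid, so the decomposition $R=S\,\dot\cup\,T$ fed into the lemma must be set up so that the relevant factor is a monoid and a two-sided neutral element of the product exists. Second, that under right multiplication by the two generators every non-loop arc of $\mathrm{Cay}(M,\{1,6\})$ lifts to a cross-sheet arc, so that each Petersen edge contributes the two cross edges of the double cover, while every loop lifts to a loop and hence disappears. Third, that the underlying graph so obtained is connected, $3$-regular and bipartite with the two sheets forming its bipartition, so that it is the Kronecker cover of the Petersen graph and therefore $G(10,3)$.

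The step I expect to be the crux is the second together with the last: confirming that, after the loop-suppression and multiedge-merging intrinsic to passing to the underlying graph, the surviving adjacencies are \emph{exactly} the double-cover adjacencies -- neither fewer (no two distinct Petersen edges at a vertex lift to one edge) nor more (no stray cross edge from a loop survives). This reduces to a finite inspection of the twenty out-neighbourhoods, most cleanly carried out by drawing $\mathrm{Cay}(N,\{(1,1),(6,1)\})$ and matching it against the standard picture of $G(10,3)$ -- its outer $10$-cycle, its single inner $10$-cycle (as $\gcd(10,3)=1$), and its ten spokes -- in the same spirit as the verification for the Petersen graph in Figure~\ref{fig:petersen}.
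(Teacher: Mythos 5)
Your route is the same as the paper's: lift the Petersen representation $\Cay(M,\{1,6\})$, $M=\mathbb{Z}_6\,\dot\cup\,N_{[6,9]}$, along a $\mathbb{Z}_2$-coordinate via Lemma~\ref{lem:monoid} with $S=\mathbb{Z}_6$, $T=N_{[6,9]}$, $R'=\mathbb{Z}_2$ (the paper takes the connection set $\{(1,1),(6,0)\}$, you take $\{(1,1),(6,1)\}$; with the correct multiplication either choice works). But two of the checks you defer would in fact fail as you have set them up. The monoid issue you rightly flag as ``the one delicate algebraic point'' cannot be resolved by ``arranging the decomposition'': the hypotheses $ST\subseteq T$, $TS\subseteq T$ force $T$ to be the ideal $N_{[6,9]}$, which is not a monoid, and under the multiplication as printed in Lemma~\ref{lem:monoid} (namely $(s,i)(r,j)=(sr,i)$ for $s\in S$ and $(t,i)(r,j)=(tr,ij)$ for $t\in T$) a two-sided identity of $M\times\mathbb{Z}_2$ would have to be a pair $(e,e')$ with $e\in T$ acting as a two-sided identity on all of $M$, which is impossible for an ideal disjoint from $S$. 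Concretely, with the printed formulas and either connection set, the vertex $(9,0)$ has four distinct neighbours (the cross-neighbour $(9,1)$ plus the three in-neighbours coming from $6,7,8$), so the underlying graph is not even cubic. What is actually needed -- and what the figure in the paper implicitly uses -- is the lemma with the two formulas exchanged, $(s,i)(r,j)=(sr,ij)$ and $(t,i)(r,j)=(tr,i)$: then associativity still holds and $(0,0)$, with $0\in S$ the identity of $M$, is a genuine two-sided identity. This defect is inherited from the paper's own statement of the lemma, but your proposal leans on it at exactly the point where it breaks.

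The second genuine gap is your cross-sheet claim. You assert that every non-loop arc of $\Cay(M,\{1,6\})$ lifts to a sheet-flipping arc, so that the two sheets form the bipartition of the Kronecker cover. This is false and cannot be repaired by any choice of lifted connection set: in this construction the sheet behaviour of a product depends only on whether the \emph{left} factor lies in $S$ or in $T$, and the non-loop arcs $6,7,8\to 9$ (generator $6$) have their left factor in $T$, exactly like the loops at $6,7,8,9$. Hence one cannot make the former flip sheets while the latter stay loops. With the corrected multiplication and your set $\{(1,1),(6,1)\}$, the arcs into $9$ remain inside their sheet, so the sheets are \emph{not} the two sides of the bipartition. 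The graph obtained is nonetheless isomorphic to the Kronecker cover -- compose with the relabelling that exchanges $(9,0)\leftrightarrow(9,1)$ -- so your fallback of matching the drawing against the standard picture of $G(10,3)$ does succeed; but the structured verification you outline (``exactly the double-cover adjacencies, sheets as bipartition'') would break at this step, and the extra twist at the vertex $9$ is the missing idea.
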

\begin{proof}
 The Cayley graph is depicted in the left of Figure~\ref{fig:desargues}. Let us see that this really is the Cayley graph of a monoid. In fact consider the monoid representation $\Cay(M,\{1,6\})$ of the Petersen graph, where $M=\mathbb{Z}_6\cup N_{[6,9]}$. Note in particular, that we have $\mathbb{Z}_6\cdot N_{[6,9]}\subseteq N_{[6,9]}$ and $N_{[6,9]}\cdot\mathbb{Z}_6\subseteq N_{[6,9]}$. Hence by Lemma~\ref{lem:monoid} the set $M\times\mathbb{Z}_2$ carries a monoid structure $\widetilde{M}$. The graph in the left of Figure~\ref{fig:desargues} is $\Cay(\widetilde{M},\{(1,1),(6,0)\})$. Note however that $\{(1,1),(6,0)\}$ does not generate $\widetilde{M}$. 
\end{proof}

\begin{figure}[ht]
\begin{center}
\includegraphics[width =\textwidth]{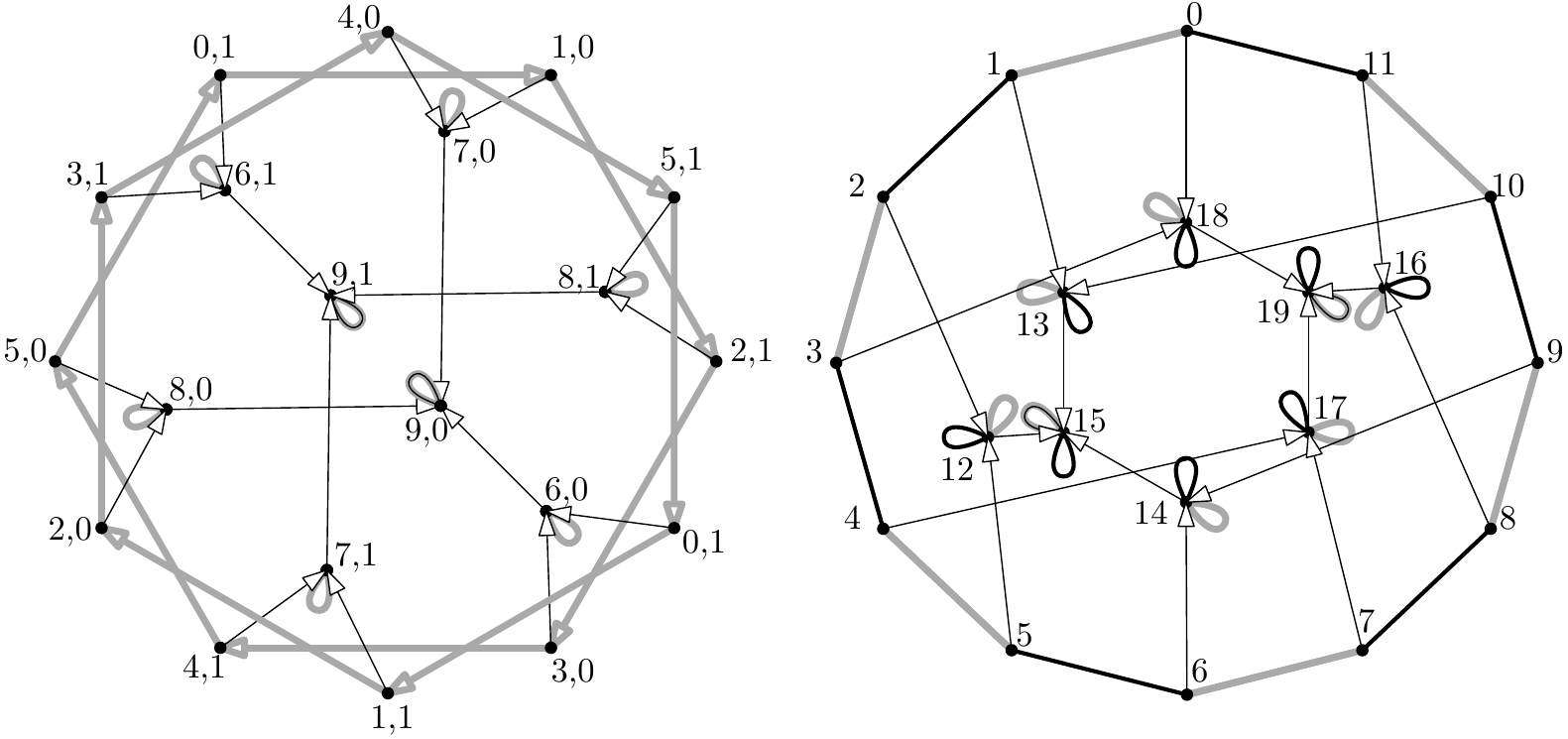}
\caption{Left: A Cayley graph realizing the Desargues graph $G(10,3)$. Right: A Cayley graph realizing the Dodecahedron graph $G(10,2)$. }
\label{fig:desargues}
\end{center}
\end{figure}

Now, let us consider the monoid $M$ depicted in Table~\ref{tab:dodecahedron}. This monoid is the disjoint union of the dihedral group $D_6$ on $\{0,\ldots 11\}$, the and the two null semigroups $N_{[12,15]}$ and $N_{[16,19]}$. The Cayley graph $\Cay(M,\{1,11,18\})$ depicted on the right of Figure~\ref{fig:desargues} realizes the Dodecahedron graph $G(10,2)$. We get:

\begin{proposition}\label{pr:dodeca}
 The Dodecahedron graph $G(10,2)$ is a monoid graph.
\end{proposition}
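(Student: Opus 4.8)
The monoid $M$ and the connection set $C=\{1,11,18\}$ are already given explicitly in Table~\ref{tab:dodecahedron} and the right-hand drawing of Figure~\ref{fig:desargues}, so the task splits into two verifications: that the operation tabulated in Table~\ref{tab:dodecahedron} is associative and has a neutral element (so that $M$ is genuinely a monoid), and that the underlying graph of $\Cay(M,C)$ is $G(10,2)$. The plan is to run both verifications along the block decomposition $M=D_6\,\dot\cup\,N_{[12,15]}\,\dot\cup\,N_{[16,19]}$, writing $K:=N_{[12,15]}\cup N_{[16,19]}$ for the union of the two null semigroups. One first reads off the table that $K$ is a two-sided ideal of $M$ (products involving a factor from $K$ stay in $K$) and that the neutral element of $M$ is the identity of the group block $D_6$, which one checks also fixes every element of $K$ from both sides.

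The substantive part of the first verification is associativity $(xy)z=x(yz)$, which I would treat by a case analysis according to which of the three blocks contains $x$, $y$, $z$. When all three lie in $D_6$ this is just group associativity, and when at least two lie in the \emph{same} null semigroup the product collapses to that semigroup's absorbing element irrespective of the bracketing. The remaining cases are the only real content: they reduce to checking that $D_6$ acts on $K$ on the left and on the right, that these two one-sided actions commute, and that the products mixing $N_{[12,15]}$ with $N_{[16,19]}$ associate correctly. This finite check is what I expect to be the main obstacle, but each case becomes routine once the left and right actions of the two group elements $1,11\in D_6$ of $C$ on $K$ have been recorded.

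For the second verification I would compute, for each of the twenty elements $s\in M$, its three right products $s\cdot1$, $s\cdot11$, $s\cdot18$ from Table~\ref{tab:dodecahedron}, and then pass to the underlying graph by deleting loops, forgetting orientations, and merging parallel edges. One checks that the resulting simple graph is $3$-regular on $20$ vertices and then exhibits an explicit bijection between $M$ and $V(G(10,2))=\{u_0,\dots,u_9,v_0,\dots,v_9\}$ carrying the computed adjacencies onto the outer rim $E_O(10,2)$, the inner rims $E_I(10,2)$, and the spokes $E_S(10,2)$. Since both graphs are cubic on $20$ vertices, it is enough to confirm that all $30$ edges correspond; as the labelled picture in Figure~\ref{fig:desargues} already indicates such a bijection, this last step is bookkeeping rather than a genuine difficulty.
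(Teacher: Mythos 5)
Your proposal takes exactly the paper's route: the paper offers no proof text beyond exhibiting the monoid of Table~\ref{tab:dodecahedron} and the Cayley graph $\Cay(M,\{1,11,18\})$ on the right of Figure~\ref{fig:desargues}, so the two finite verifications you describe (associativity plus neutral element, and matching the underlying graph with $G(10,2)$) are precisely the checks the paper leaves implicit. One concrete inaccuracy in your case analysis, however: it is \emph{not} true that whenever at least two of the three factors lie in the same null semigroup the product collapses to that semigroup's absorbing element, because the left action of $D_6$ on $K$ interchanges the two null blocks. From the table, $1\cdot 12=16$, so for instance $(1\cdot 12)\cdot 13=16\cdot 13=19$, even though $12,13\in N_{[12,15]}$, whose absorbing element is $15$; the other bracketing gives $1\cdot(12\cdot 13)=1\cdot 15=19$ as well. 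So associativity does hold in these cases—both bracketings land on the absorbing element of the \emph{image} block, since (as the table shows) each group element maps each null block entirely into a single null block and sends absorbing element to absorbing element—but this family of cases cannot be dismissed as trivial with the justification you gave; it belongs with your ``real content'' cases and requires recording the homomorphism $D_6\to\mathbb{Z}_2$ that tells which group elements swap the two blocks. With that case moved into the checked list, your verification plan is complete and correct.
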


\begin{table}

\centering
\begin{tabular}{l|cccccccccccccccccccc }

$M$&0&1&2&3&4&5&6&7&8&9&10&11&12&13&14&15&16&17&18&19\\ 
\hline 
0&0&1&2&3&4&5&6&7&8&9&10&11&12&13&14&15&16&17&18&19\\ 
1&1&0&11&10&9&8&7&6&5&4&3&2&16&18&17&19&12&14&13&15\\ 
2&2&3&4&5&6&7&8&9&10&11&0&1&17&18&16&19&13&14&12&15\\ 
3&3&2&1&0&11&10&9&8&7&6&5&4&13&12&14&15&17&16&18&19\\
4&4&5&6&7&8&9&10&11&0&1&2&3&14&12&13&15&18&16&17&19\\ 
5&5&4&3&2&1&0&11&10&9&8&7&6&18&17&16&19&14&13&12&15\\ 
6&6&7&8&9&10&11&0&1&2&3&4&5&16&17&18&19&12&13&14&15\\ 
7&7&6&5&4&3&2&1&0&11&10&9&8&12&14&13&15&16&18&17&19\\ 
8&8&9&10&11&0&1&2&3&4&5&6&7&13&14&12&15&17&18&16&19\\ 
9&9&8&7&6&5&4&3&2&1&0&11&10&17&16&18&19&13&12&14&15\\ 
10&10&11&0&1&2&3&4&5&6&7&8&9&18&16&17&19&14&12&13&15\\ 
11&11&10&9&8&7&6&5&4&3&2&1&0&14&13&12&15&18&17&16&19\\ 
12&12&12&12&12&12&12&12&12&12&12&12&12&15&15&15&15&15&15&15&15\\ 
13&13&13&13&13&13&13&13&13&13&13&13&13&15&15&15&15&15&15&15&15\\
14&14&14&14&14&14&14&14&14&14&14&14&14&15&15&15&15&15&15&15&15\\ 
15&15&15&15&15&15&15&15&15&15&15&15&15&15&15&15&15&15&15&15&15\\ 
16&16&16&16&16&16&16&16&16&16&16&16&16&19&19&19&19&19&19&19&19\\ 
17&17&17&17&17&17&17&17&17&17&17&17&17&19&19&19&19&19&19&19&19\\ 
18&18&18&18&18&18&18&18&18&18&18&18&18&19&19&19&19&19&19&19&19\\ 
19&19&19&19&19&19&19&19&19&19&19&19&19&19&19&19&19&19&19&19&19\\
\end{tabular}
\caption{A monoid representing of the Dodecahedron.}\label{tab:dodecahedron}
\end{table}

% \begin{figure}[ht]
% \begin{center}
% \includegraphics[width =.6\textwidth]{dodecahedron}
% \caption{The Cayley graph $\Cay(M,\{1,11,18\})$ realizing the Dodecahedron graph $G(10,2)$.}
% \label{fig:dodecahedron}
% \end{center}
% \end{figure}
% 

\bigskip

After having examined three particular generalized Petersen graphs, we proceed to construct an infinite family  of generalized Petersen graphs that are monoid graphs.
In the following we show that if $k^2 = \pm k \mod n$, then $G(n,k)$ is a loopless monoid graph. For instance the D\"urer graph $G(6,2)$ falls into this family and another example is displayed in Figure~\ref{fig:G124}.
Before stating the result, we need one more lemma, that might be of independent use. Recall that the the \emph{left-zero-band} $L_I$ is defined on $\{\ell_i\mid i\in I\}$ via $\ell_i\ell_j=\ell_i$ for all $i,j\in I$.

\begin{lemma}\label{lem:homomomo}
 If $S,T,R$ are semigroups and $\varphi:S\to T$ and $\psi:S\to R$ semigroup homomorphism. Then $S\cup (T\times L_{R})$ carries a semigroup structure via $s(t,\ell_{r})=(\varphi(s)t,\ell_{\psi(s)r})$ and $(t,\ell_{r})s=(t\varphi(s),\ell_{r})$ and the natural multiplication within $S$ and $T\times L_{T}$, respectively.
\end{lemma}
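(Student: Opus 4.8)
The plan is to verify directly that the proposed multiplication is associative. Since the product of two elements lies in $S$ exactly when both factors do, and otherwise lies in $T\times L_R$, the subset $T\times L_R$ is a two-sided ideal while $S$ is a subsemigroup of the resulting structure; this partition is what organizes the verification. Writing a typical element of $S$ as $s$ and a typical element of $T\times L_R$ as $(t,\ell_r)$, I would check $(xy)z = x(yz)$ across the $2^3=8$ cases determined by the locations of $x,y,z$.

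Two of these cases are immediate: all three factors in $S$ is associativity in $S$, and all three in $T\times L_R$ is associativity in the direct product, which is a semigroup with $(t,\ell_r)(t',\ell_{r'}) = (tt',\ell_r)$ because $T$ is a semigroup and $L_R$ a left-zero-band. Among the six mixed cases, the homomorphism hypotheses are needed precisely when two $S$-factors are adjacent: in $(ss')\cdot(t,\ell_r)$ versus $s\cdot(s'\cdot(t,\ell_r))$ the product $ss'$ appears, and equality of the two sides holds because $\varphi(ss')=\varphi(s)\varphi(s')$ and $\psi(ss')=\psi(s)\psi(s')$; in $((t,\ell_r)\cdot s)\cdot s'$ versus $(t,\ell_r)\cdot(ss')$ only $\varphi(ss')=\varphi(s)\varphi(s')$ is required, since the second coordinate stays $\ell_r$. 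The remaining four mixed cases collapse to associativity in $T$ in the first coordinate together with the left-zero law $\ell_a\ell_b=\ell_a$ in the second.

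The one point I would flag as needing care, rather than a genuine obstacle, is the asymmetry in how the second coordinate transforms: left multiplication by $s$ turns $\ell_r$ into $\ell_{\psi(s)r}$, whereas right multiplication by $s$ fixes $\ell_r$. One must confirm this is consistent with the band absorption $\ell_a\ell_b=\ell_a$. In each mixed case the leftmost band-carrying factor dictates the second coordinate of the product, and right multiplication by $S$ never disturbs it, so both bracketings yield the same $\ell$-index; verifying that the first coordinates agree as well then closes every case. As every step reduces to associativity in $T$, the homomorphism property of $\varphi$ and $\psi$, or the left-zero law, I expect only routine bookkeeping once the eight cases are displayed.
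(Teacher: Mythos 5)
Your proposal is correct and takes essentially the same approach as the paper: a direct case analysis of associativity in which the two pure cases (all factors in $S$ or all in $T\times L_R$) are dismissed by hypothesis and the six mixed cases are verified by computation, exactly the six identities the paper displays. Your additional bookkeeping---that the homomorphism properties of $\varphi$ and $\psi$ are needed only when two $S$-factors are adjacent, and that the other four mixed cases reduce to associativity in $T$ with the leftmost band index prevailing---is an accurate summary of what those computations use.
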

\begin{proof}
We check associativity, where clearly the case where all three elements come from $S$ or $T\times L_{R}$, respectively, can be ignored because on these sets we have semigroup structure by hypothesis. The other six cases are straight-forward computations:\small{
$$(s(t,\ell_{r}))(t',\ell_{r'})=(\varphi(s)t,\ell_{\psi(s)r})(t',\ell_{r'})=(\varphi(s)tt',\ell_{\psi(s)r})=s(tt',\ell_{r})=s((t,\ell_{r})(t',\ell_{r'})),$$
$$((t,\ell_{r})s)(t',\ell_{r'})=(t\varphi(s),\ell_{r})(t',\ell_{r'})=(t\varphi(s)t',\ell_{r})=(t,\ell_{r})(\varphi(s)t',\ell_{\psi(s)r'})=(t,\ell_{r})(s(t',\ell_{r'})),$$
$$((t,\ell_{r})(t',\ell_{r'}))s=(tt',\ell_{r})s=(tt'\varphi(s),\ell_{r})=(t,\ell_{r})(t'\varphi(s),\ell_{r'})=(t,\ell_{r})((t',\ell_{r'})s),$$
$$((t,\ell_{r})s)s'=(t\varphi(s),\ell_{r})s'=(t\varphi(s)\varphi(s'),\ell_{r})=(t\varphi(ss'),\ell_{r})=(t,\ell_{r})(ss'),$$
$$(s(t,\ell_{r}))s'=(\varphi(s)t,\ell_{\psi(s)r})s'=(\varphi(s)t\varphi(s'),\ell_{\psi(s)r})=s(t\varphi(s'),\ell_{r})=s((t,\ell_{r})s'),$$
$$(ss')(t,\ell_{r})=(\varphi(ss')t,\ell_{\psi(ss')r})=(\varphi(s)\varphi(s')t,\ell_{\psi(s)\psi(s')r})=s(\varphi(s')t,\ell_{\psi(s')r})=s(s'(t,\ell_{r})).$$}
\end{proof}

Recall, that a semigroup $S$ is an \emph{orthogroup} if $S$ is the union of groups and its idempotent elements form a subsemigroup, see e.g.~\cite{Kil-00}. Note that none of the semigroups we have seen in this section so far is an orthogroup.

\begin{theorem}\label{thm:Cay1}
If $k^2 \equiv \pm k \mod n$, then $G(n,k)$ is a monoid graph $\Cay(M,C)$ where the latter is loopless and $M$ is an orthogroup.
\end{theorem}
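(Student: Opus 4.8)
The plan is to reduce to the case $k^2\equiv k \pmod n$ and then realize $G(n,k)$ as an instance of the construction in Lemma~\ref{lem:homomomo}. First I would observe that $G(n,k)\cong G(n,n-k)$ and that $(n-k)^2\equiv n-k \pmod n$ whenever $k^2\equiv -k \pmod n$; so after possibly replacing $k$ by $n-k$ (allowing a nonstandard parameter larger than $n/2$, exactly as in the proof of Theorem~\ref{thm:cores}) I may assume $k^2\equiv k \pmod n$. Writing $d=\gcd(n,k)$, $n'=n/d$ and $k=dk'$ with $\gcd(n',k')=1$, the key arithmetic step is to extract from $k^2\equiv k$ the two facts $\gcd(d,n')=1$ and $k\equiv 1\pmod{n'}$: indeed $n\mid k(k-1)$ forces $n'\mid k'(dk'-1)$, and since $\gcd(n',k')=1$ this gives $dk'\equiv 1\pmod{n'}$, which simultaneously makes $d$ invertible modulo $n'$ and says $k\equiv dk'\equiv 1\pmod{n'}$.

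Next I would construct the monoid. Take $S=\Z_n$ (these will label the outer vertices), $T=\Z_{n'}$, $R=\Z_d$, and let $\varphi\colon \Z_n\to\Z_{n'}$ and $\psi\colon\Z_n\to\Z_d$ be reduction modulo $n'$ and modulo $d$, which are group homomorphisms. Lemma~\ref{lem:homomomo} then endows $M:=S\cup(T\times L_R)$ with a semigroup structure, and a direct check (using $\varphi(0)=0$ and $\psi(0)=0$) shows that $0\in S$ is a two-sided identity, so $M$ is a monoid. It is an orthogroup: $S$ is a group, each slice $\{(t,\ell_r)\mid t\in T\}$ is a group isomorphic to $\Z_{n'}$, and the idempotents $\{0\}\cup\{(0,\ell_r)\mid r\in\Z_d\}$ are easily seen to be closed under multiplication.

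Then I would set $C=\{1,\,(1,\ell_0)\}$ with $1\in S$ and compute all arcs. From $u_i=i$ one gets $i\cdot 1=u_{i+1}$ and $i\cdot(1,\ell_0)=(\varphi(i)+1,\ell_{\psi(i)})$, while from an inner element $(t,\ell_r)$ both generators yield $(t+1,\ell_r)$. Hence the underlying graph is the disjoint union of the outer cycle $C_n$ on $S$, the $d$ inner cycles $C_{n'}$ (one per band value $\ell_r$), together with the map $u_i\mapsto(\varphi(i)+1,\ell_{\psi(i)})$; in particular it is loopless and $3$-regular. To identify it with $G(n,k)$, consider $\theta\colon\Z_n\to\Z_{n'}\times\Z_d$, $\theta(i)=(i\bmod n',\,i\bmod d)$. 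Because $\gcd(d,n')=1$, $\theta$ is a bijection by the Chinese Remainder Theorem, so the spoke map is a perfect matching; and because $k\equiv 1\pmod{n'}$ while $d\mid k$, one has $\theta(k)=(1,0)$. Thus the two inner vertices matched to $u_i,u_j$ lie on a common inner cycle with $T$-coordinates differing by one exactly when $\theta(j)-\theta(i)=(\pm1,0)=\theta(\pm k)$, i.e.\ when $j\equiv i\pm k\pmod n$. This reproduces the inner-rim adjacency $v_i\sim v_{i\pm k}$, and sending $u_i\mapsto u_i$ and $(\varphi(i)+1,\ell_{\psi(i)})\mapsto v_i$ gives the required isomorphism onto $G(n,k)$.

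The main obstacle I anticipate is the bookkeeping in this last step: one must verify that the single generator $(1,\ell_0)$ simultaneously produces the spokes (acting on outer vertices) and the inner rim (acting on inner vertices) without creating any spurious edges, and that the induced matching carries exactly the twist prescribed by $k$. This is precisely where the hypothesis enters, through $\gcd(d,n')=1$ (perfectness of the matching, via the Chinese Remainder Theorem) and $k\equiv 1\pmod{n'}$ (the identity $\theta(k)=(1,0)$, which aligns the shift by $k$ with passing to the successor on an inner cycle). By contrast, checking the monoid axioms, looplessness of $C$, and the orthogroup property should be routine once Lemma~\ref{lem:homomomo} is in place.
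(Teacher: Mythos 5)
Your proposal is correct and is essentially the paper's own proof: you build the same monoid $\Z_n\cup(\Z_{n/d}\times L_d)$ via Lemma~\ref{lem:homomomo} with the same reduction homomorphisms and the same connection set $C=\{1,(1,\ell_0)\}$, and identify outer vertices, spokes, and inner rims in the same way. The only (welcome) deviations are cosmetic: you first reduce $k^2\equiv -k$ to $k^2\equiv k$ by passing to the nonstandard parameter $n-k$, where the paper instead handles both signs $k\equiv\pm1\pmod{n/d}$ directly in the final adjacency check, and you make explicit the Chinese Remainder Theorem argument (via $\gcd(d,n/d)=1$) showing the spoke map is a perfect matching, a point the paper leaves implicit.
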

\begin{proof}First observe that $k^2 = \pm k \mod n$ is equivalent to $k \equiv \pm 1 \mod n/\gcd(n,k)$. Consider $S=A\cup A'$ with $A=\mathbb{Z}_{n}$ and $A'=\mathbb{Z}_{\frac{n}{\gcd(n,k)}}\times L_{\gcd(n,k)}$. Here $L_{\gcd(n,k)}:=L_{\mathbb{Z}_{\gcd(n,k)}}$. 
Since $A$ is a group and $A'$ a \emph{left-group}, i.e., the product of a group and a left-zero-band, we have that $S$ is the union of groups. This already yields one of the properties required for an orthogroup.

Now, for $x\in\mathbb{Z}_{n}$ and $(i,\ell_j)\in \mathbb{Z}_{\frac{n}{\gcd(n,k)}}\times L_{\gcd(n,k)}$ define $$x(i,\ell_j)=(x+i \mod \frac{n}{\gcd(n,k)},\ell_{x+j \mod \gcd(n,k)})$$ and $$(i,\ell_j)x=(x+i \mod \frac{n}{\gcd(n,k)},\ell_j).$$

Note that defining $\varphi:\mathbb{Z}_{n}\to \mathbb{Z}_{\frac{n}{\gcd(n,k)}}$ as $x\mapsto x \mod \frac{n}{\gcd(n,k)}$ and $\psi:\mathbb{Z}_{n}\to \mathbb{Z}_{{\gcd(n,k)}}$ as $x\mapsto x \mod {\gcd(n,k)}$ we get two semigroup homomorphisms. By Lemma~\ref{lem:homomomo} our operation is a semigroup. Further note that $0\in\mathbb{Z}_n$ is a neutral element of this operation, so we do have a monoid. Finally, the set of idempotent elements $I(S)$ of $S$ consists  of $0\in\mathbb{Z}_n$ and furthermore the set $\{0\}\times L_{\gcd(n,k)}$. Clearly, $I(S)\cong L^+_{\gcd(n,k)}$, i.e., $L_{\gcd(n,k)}$ with an adjoint neutral element. In particular $I(S)<S$ is a subsemigroup, which concludes the proof that $S$ is an orthogroup monoid.

Let now $C=\{1,(1,\ell_0)\}\subset S$. Clearly, $\Cay(S, C)$ is loopless. Let us see that $G(n,k)$ is the underlying graph of $\Cay(S, C)$. We identify $A$ with the outer vertices and $A'$ with the inner vertices. Clearly $1$ generates the outer-rim on $A$ and for each of the vertices $x \in A$  there is exactly one edge towards the inner vertices generated by $(1,\ell_0)$ and connecting $x$ with $(x+1,\ell_x)$ . Moreover both $1$ and $(1,\ell_0)$ have the same right action on $A'$ and partition the inner vertices into the $\gcd(n,k)$ cycles of length $\frac{n}{\gcd(n,k)}$.
We also observe that for all $i \in \{0,\ldots, \gcd(n,k)-1\}$, the inner neighbors of the vertices $\{x \, \vert \, x \equiv i \ ({\rm mod}\ \gcd(n,k))\} \subseteq A$ are the vertices of one of the inner cycles, more precisely \[ \{x(1,\ell_0) \, \vert \, x \equiv i \ ({\rm mod}\ \gcd(n,k))\} = \mathbb Z_{n/\gcd(n,k)} \times \{\ell_i\} \subseteq A'.\] Finally, we have that $x (1,\ell_0)$ (the inner neighbour of $x \in A$) and $(x+k) (1,\ell_0)$ (the inner neighbor of $x+k \in A$) are also neighbors, indeed, \begin{itemize} \item if $k \equiv 1 ({\rm mod}\ (n/\gcd(n,k)))$, then \[ (x + k)(1,\ell_0) = (x+k+1, \ell_{x+k}) = (x+2,\ell_x) = x (1,\ell_0) (1, \ell_0),\] \item if $k \equiv -1 ({\rm mod}\ (n/\gcd(n,k)))$, then  \[ (x + k)(1,\ell_0)(1,\ell_0) = (x+k+2,\ell_{x-k}) = (x+1,\ell_x) =  x (1,\ell_0).\] \end{itemize}
This completes the proof.  
See the left of Figure~\ref{fig:G124} for an example. 
\end{proof}

\begin{remark}\label{rem:variants}
In the above construction we get interior double arcs which are parallel when $k^2 \equiv k \ ({\rm mod} \ n)$,  and anti-parallel when $k^2 \equiv -k \ ({\rm mod} \ n)$. If we chose $C=\{1,(-1,\ell_0)\}$ in that construction the situation gets reversed, that is, we get the same underlying graph but the interior double arcs are anti-parallel  for $k^2 \equiv k \ ({\rm mod} \ n)$ and parallel when $k^2 \equiv -k \ ({\rm mod} \ n)$. Moreover, if we choose $C=\{1,(0,\ell_0)\}$ we obtain a digraph with loops on the inner vertices but without multiple arcs whose underlying graph is $G(n,k)$. See Figure~\ref{fig:G124}. 
\end{remark}

\begin{figure}[ht]
\begin{center}
\includegraphics[width = \textwidth]{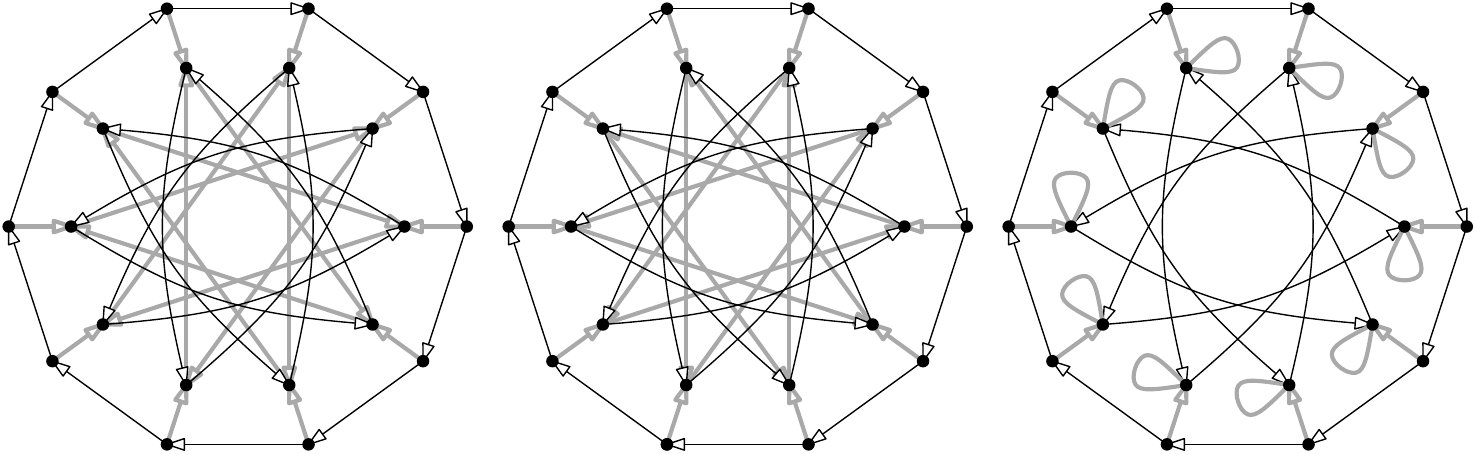}
\caption{Three Cayley graphs of the monoid from Theorem~\ref{thm:Cay1} with underlying graph $G(10,4)$, also see Remark~\ref{rem:variants}.}
\label{fig:G124}
\end{center}
\end{figure}
 
% 
% \begin{lemma}
%  For cubic $G=\underline{\Cay}(S,C)$, we have that $|C|=2$ implies that there is an automorphism $\varphi\in\Aut(G)\cap C$, such that the order $\varphi$ is higher than two and is attained by the length of a cycle $C$. If $G$ is not a Cayley graph of a group, then $C$ is induced.\comment{maybe this helps to prove that the Cayley graphs that we have are the only ones with two generators}
% \end{lemma}
% 
% \begin{lemma}
%  If $D={\Cay}(S,C)$, then for every $s\in S$ the set $sS\subset S$ has with outdegree $0$, i.e., induces a directed cut in $D$. 
% \end{lemma}
% 
% 
% \begin{lemma}
%  For $G=\underline{\Cay}(S,C)$ with $<C>=S$, we have that $\mathrm{const}_x\in C$ implies that $G$ has an independent dominating set $I$ such that $G\setminus I$ has at most $|C|-1$ components. If $S$ is a monoid, then $G\setminus I$ has one component. \comment{this applies for Dodecahedron (wrt semigroup) and for Petersen (wrt monoid)}
% \end{lemma}
% 

\subsection{Generalized Petersen graphs that are $2$-generated monoid graphs}

The degree of the vertex $e$ of a monoid graph ${\rm Cay}(M,C)$ is at least $|C|$ and the number of edges of the graph is at most $|C|$ times its number of vertices. Hence, if a cubic graph is a monoid graph, then $2\leq |C|\leq 3$. 
The goal of this section is to study the case $|C|=2$. We prove Theorem~\ref{thm:mon2gen}, which characterizes all the generalized Petersen graphs that are monoid  graphs with underlying Cayley graph ${\rm Cay}(M,C)$ with $M = \langle C \rangle$ and $|C| = 2$. These graphs are exactly the ones obtained in the Theorem~\ref{thm:Cay1}, the group Cayley graphs, and the Petersen graph. 

\begin{theorem}\label{thm:mon2gen}The generalized Petersen graph $G(n,k)$ is a monoid graph ${\rm Cay}(M,C)$ with $M = \langle C \rangle$ and $|C| = 2$ if and only if one of the following holds: \begin{itemize} \item[(a)] $(n,k)=(5,2)$ (Petersen graph),  \item[(b)] $k^2 \equiv 1 \ ({\rm mod}\ n),$ or \item[(c)] $k^2 \equiv \pm k\ ({\rm mod}\ n)$. \end{itemize}
\end{theorem}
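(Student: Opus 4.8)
The plan is to prove the two implications separately: the ``if'' direction by exhibiting in each of the three families an explicit monoid with a two-element generating connection set, and the ``only if'' direction by a structural analysis of the right multiplications of $M$.

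For the \emph{``if'' direction} I would treat the three cases constructively. For $(b)$, when $k^2\equiv 1\ (\mathrm{mod}\ n)$, I use the group $\Gamma=\langle a,b\mid a^n=b^2=e,\ b^{-1}ab=a^k\rangle$ of order $2n$ underlying Theorem~\ref{thm:cayleygroup}. With $C=\{a,b\}$, right multiplication by $a$ produces the outer rim on $\langle a\rangle$ and, via the relation $ba=a^{k}b$ (which holds because $k^2\equiv 1$), the inner rims on the coset $\langle a\rangle b$, while right multiplication by the involution $b$ produces the spokes; identifying $u_i=a^i$ and $v_i=a^ib$ gives exactly $G(n,k)$, and clearly $\langle a,b\rangle=\Gamma$ with $|C|=2$. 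For $(c)$, when $k^2\equiv\pm k\ (\mathrm{mod}\ n)$, Theorem~\ref{thm:Cay1} already furnishes the loopless realization $\Cay(M,\{1,(1,\ell_0)\})$; it only remains to check the generation hypothesis $M=\langle 1,(1,\ell_0)\rangle$, which I verify by observing that $1$ generates the group part $A=\Z_n$, that the powers $(1,\ell_0)^m=(m,\ell_0)$ sweep out $\Z_{n/\gcd(n,k)}\times\{\ell_0\}$, and that left multiplications by $A$ then reach every $(i,\ell_j)\in A'$. For $(a)$, Proposition~\ref{prop:Petersen} together with Table~\ref{tab:Pet} gives $\Cay(M,\{1,6\})$ with $M=\Z_6\cup N_{[6,9]}$; here $1$ generates $\Z_6$ while $1\cdot 6=7$, $1\cdot 7=8$ and $6\cdot 6=9$ recover the null part, so $M=\langle 1,6\rangle$ and $|C|=2$.

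For the \emph{``only if'' direction}, let $\Cay(M,\{a,b\})$ realize $G(n,k)$ with $M=\langle a,b\rangle$ and $|M|=2n$. Since a one-generated monoid has a path or cycle as Cayley graph, we have $a,b\neq e$, so $e$ carries no loop and cubicity forces an incoming edge at $e$. The heart of the argument is to understand the two functional digraphs given by the right multiplications $R_a,R_b\colon M\to M$, whose union (after deleting loops and merging parallel and anti-parallel arcs) is the cubic graph $G(n,k)$. A short arc count is useful here: the digraph has $4n$ arcs but $G(n,k)$ has only $3n$ edges, so the deficit $n$ must be absorbed entirely by loops and by multiply-covered edges. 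The first dichotomy is whether $R_a$ and $R_b$ are both bijections. If so, then every $R_m$ with $m\in M=\langle a,b\rangle$ is bijective, $M$ is a finite monoid all of whose right multiplications are permutations, hence a group; then $G(n,k)$ is a group graph and Theorem~\ref{thm:cayleygroup} yields $k^2\equiv 1\ (\mathrm{mod}\ n)$, which is case $(b)$.

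The remaining, and hardest, situation is when at least one right multiplication fails to be a bijection, and here I would split further according to whether $\Cay(M,\{a,b\})$ is loopless. If it is loopless, I argue that one of $R_a,R_b$, say $R_a$, is nonetheless a permutation of $M$ whose cycles must realize the outer and inner rims, while $b$ realizes the spokes and doubles the inner rims exactly as in the construction of Theorem~\ref{thm:Cay1}; comparing the two ways of composing a rim step with a spoke step at an outer vertex (an ``$ab$ versus $ba$'' relation) then forces $k\equiv\pm 1\ (\mathrm{mod}\ n/\gcd(n,k))$, equivalently $k^2\equiv\pm k\ (\mathrm{mod}\ n)$, giving case $(c)$. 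If instead loops occur, the loop-carrying vertices lie in the minimal ideal of $M$ and behave like zeros; together with cubicity and the fact that every vertex is reachable from $e$ (as $M=\langle C\rangle$), this should pin the configuration down to the sporadic realization $\Z_6\cup N_{[6,9]}$ and hence to the Petersen graph, i.e.\ case $(a)$. I expect the main obstacle to be exactly this non-bijective analysis: ruling out every loopless realization with both $R_a,R_b$ non-bijective outside family $(c)$, converting the monoid multiplication data into the precise congruence on $(n,k)$, and proving that the loop case admits no generalized Petersen graph other than $G(5,2)$.
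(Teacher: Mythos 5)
Your ``if'' direction is correct and is essentially the paper's: cite Proposition~\ref{prop:Petersen} for $(5,2)$, Corollary~\ref{cor:petgroup} for $k^2\equiv 1$, and Theorem~\ref{thm:Cay1} for $k^2\equiv\pm k$ (your explicit checks that the connection sets actually generate are a reasonable addition). The ``only if'' direction, however, has two genuine gaps, and they are exactly the hard core of the theorem. Your opening dichotomy is fine: if both right multiplications are bijective then $M$ is a group and Theorem~\ref{thm:cayleygroup} gives case (b); and the existence of an invertible generator $g\in C$ can indeed be forced from cubicity (this is the paper's Lemma~\ref{lm:ciclo2}). But your next assertion --- that in the loopless case the cycles of $R_g$ ``must realize the outer and inner rims \ldots exactly as in the construction of Theorem~\ref{thm:Cay1}'' --- is stated with no argument, and it is precisely what needs proof: a priori the induced cycle $(e,g,g^2,\ldots,g^{o(g)-1})$ could be any cycle of $G(n,k)$. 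The paper's proof (Lemma~\ref{lm:technical}) closes this by observing that \emph{left} multiplication $\lambda_g$ is an automorphism of the graph that rotates this cycle, and then invoking the Frucht--Graver--Watkins classification of ${\rm Aut}(G(n,k))$ (Theorem~\ref{thm:autgroup}, Corollary~\ref{cor:autdi}), plus computer search in the sporadic cases $(10,2)$ and $(10,3)$, to conclude that a cycle admitting such a rotation is, up to automorphism, the outer or an inner rim. Nothing in your sketch plays this role; your analysis of right multiplications alone cannot locate the $g$-cycle inside the graph. Moreover, even granted the rim identification, you miss one configuration: the $g$-cycle may be an \emph{inner} rim of length $n/\gcd(n,k)<n$ (case (e) of Lemma~\ref{lm:technical}). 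This case is not excluded by any local ``$ab$ versus $ba$'' relation; the paper rules it out only by using $M=\langle C\rangle$ together with edge counting and the color-endomorphism property (Theorem~\ref{colorend}).

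The second gap is the loop case. Your ``this should pin the configuration down to \ldots the Petersen graph'' cannot be repaired by a soft argument, because the statement you are implicitly relying on is false without heavy use of the generation hypothesis: the Desargues graph $G(10,3)$ satisfies none of (a), (b), (c), yet it \emph{is} a loopy monoid graph ${\rm Cay}(M,C)$ with $|C|=2$ (Proposition~\ref{prop:Desargues}; there $C$ does not generate $M$). So any correct treatment of the loop case must deploy $M=\langle C\rangle$ in an essential, structural way to kill Desargues-type realizations. The paper does this with a dedicated argument (excluding case (b) of Lemma~\ref{lm:technical}): reachability of every vertex from $e$ forces the two inner vertices of the hypothetical picture to be sinks carrying loops, whence $h^2=h^3$, while invertibility of $g$ forces the inner rim to consist of the six distinct elements $hg, hgh,\ldots,hgh^5$ --- a contradiction. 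Your remark that loop vertices lie in the minimal ideal is neither justified nor put to use. In summary: the group case is handled correctly, but the two pillars of the ``only if'' direction --- the automorphism-theoretic identification of the $g$-cycle with a rim, and the generation-based exclusion of the inner-rim and Desargues-type configurations --- are missing from the proposal.
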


\begin{lemma}\label{lm:ciclo2} Let $G = \Cay(M,C)$ be a cubic graph with $|C| = 2$, then there exists an invertible element $g \in C$ of order $o(g) > 2$. Moreover, $(e,g,g^2,\ldots,g^{o(g)-1},e)$ is a cycle in $G$. 
\end{lemma}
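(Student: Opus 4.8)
The plan is to analyze the neighborhood of the identity $e$ in $G$, exploiting the fact that the two out-arcs at $e$ point to the generators, while any in-arc at $e$ encodes a left inverse of a generator. Write $C=\{c_1,c_2\}$ with $c_1\neq c_2$. First I would record that the out-neighbors of $e$ are $ec_1=c_1$ and $ec_2=c_2$, whereas an in-neighbor is a solution $s$ of $sc_i=e$. Here I would invoke the standard fact (valid since the underlying monoid $M$ is finite, as $G$ is a finite graph) that a left-invertible element of a finite monoid is a two-sided unit with a \emph{unique} inverse. Consequently, the generator $c_i$ contributes the single in-neighbor $c_i^{-1}$ exactly when $c_i$ is a unit, and none otherwise. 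This yields the explicit description $N_G(e)=\big(\{c_1,c_2\}\cup\{\,c_i^{-1}\mid c_i\text{ is a unit}\,\}\big)\setminus\{e\}$.

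Second, I would feed cubicness, i.e.\ $|N_G(e)|=3$, into this description to force a unit of large order. Arguing by contradiction, suppose that neither $c_1$ nor $c_2$ is a unit with $o(c_i)>2$. Then each generator is either a non-unit (contributing no in-neighbor), equal to $e$, or an involution; in every one of these cases one has $c_i^{-1}\in\{c_i,e\}$ whenever the inverse exists. Hence the in-neighbors add nothing outside $\{c_1,c_2,e\}$, so $N_G(e)\subseteq\{c_1,c_2\}\setminus\{e\}$ has at most two elements, contradicting $|N_G(e)|=3$. Therefore some $g:=c_i\in C$ is a unit with $o(g)\geq 3$, which is the first assertion.

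The \emph{moreover} part should then follow immediately. Since $g$ is a unit of order $r=o(g)\geq 3$, the powers $e=g^0,g^1,\ldots,g^{r-1}$ are pairwise distinct and $g^r=e$. For each $j$ the element $g\in C$ produces an arc $g^j\to g^{j+1}$ (indices modulo $r$), and $g^j\neq g^{j+1}$ because $g$ is cancellable and $g\neq e$; thus each arc descends to a genuine edge $\{g^j,g^{j+1}\}$ of $G$. As $r\geq 3$, these $r$ edges on $r$ distinct vertices close up into the cycle $(e,g,g^2,\ldots,g^{r-1},e)$.

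I expect the second step to be the crux: the whole argument hinges on recognizing that the degree-$3$ condition at $e$, together with the finite-monoid principle that the only candidate in-neighbors of $e$ are inverses of units, leaves no room for a third neighbor unless one generator is a non-involutive unit. The remaining ingredients — computing $N_G(e)$ and exhibiting the cyclic group generated by $g$ as a cycle of $G$ — are routine once this local obstruction at $e$ is isolated.
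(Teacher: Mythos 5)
Your proof is correct and takes essentially the same approach as the paper's: both hinge on the observation that the third neighbor of $e$ must be an in-neighbor, i.e.\ a left inverse of a generator, which by finiteness forces that generator to be a unit (the paper proves this inline via the minimal-repeated-power argument, which is exactly the standard fact you cite), and both rule out order $\leq 2$ because then this in-neighbor would already lie in $C\cup\{e\}$. The only difference is organizational: you argue contrapositively from a complete description of $N_G(e)$, whereas the paper works directly with the extra neighbor $x\notin C$ satisfying $xg=e$.
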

\begin{proof} We observe that $C \subseteq N_G(e)$. Moreover, since $G$ is a cubic graph and $|C| = 2$, then there exist $x \notin C$ and $g \in C$ such that $x g = e$. Let us see that $g$ is invertible. We take $i \in \mathbb N$ the minimum value such that there exists a $j > i$ such that $g^i = g^j$ and let us see that $i = 0$. Indeed, if $i > 0$, then $g^{i-1} = (x g) g^{i-1} = x g^i = x g^j = (x g)g^{j-1} = g^{j-1}$, a contradiction. As a consequence $g$ is invertible. Finally, we have that $g^2 \neq e$ because, otherwise $x = x g^2 = (xg) g = g \in C$, a contradiction.
Clearly $(e,g,g^2,\ldots,g^{o(g)-1},e)$ is a cycle in $G$.
\end{proof}

In this section we will again make use of the rotation $\alpha$, reflection $\beta$, and the inside-out map $\gamma$ from the automorphism group of the generalized Petersen graph.
In~\cite{Ned-95} (see also~\cite{Lov-97}), it is proven that $G(n,k)$ is a group graph if and only if  $k^2 \equiv 1 \ ({\rm mod}\ n)$. In~\cite{Lov-97}, the author observes that whenever $G(n,k)$ is a group graph, it is $\Cay(H,C)$ with $H = \langle \alpha, \gamma  \, \vert\, \alpha^n = \gamma^2 = {\rm id},\,  \gamma \alpha \gamma = \alpha^k \rangle$ and $C = \{\alpha,\gamma\}$.  Hence, one gets the following.

\begin{corollary}\label{cor:petgroup} If $k^2 \equiv 1 \ ({\rm mod}\ n)$, then $G(n,k) = \Cay(H,C)$ with $H = \langle C \rangle$ a group and $|C| = 2.$ 
\end{corollary}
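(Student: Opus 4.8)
The plan is to obtain the statement as an immediate repackaging of Theorem~\ref{thm:cayleygroup} together with the explicit representation recalled above from~\cite{Lov-97}. Under the hypothesis $k^2 \equiv 1 \ ({\rm mod}\ n)$ we are precisely in the case where $G(n,k)$ is a group graph, and by that representation it is realized as $\Cay(H,C)$ with $H = \langle \alpha, \gamma \, \vert\, \alpha^n = \gamma^2 = {\rm id},\, \gamma \alpha \gamma = \alpha^k \rangle$ and $C = \{\alpha,\gamma\}$. It then remains only to check the three assertions of the corollary against this concrete representation.

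First, $H$ is a group by construction, being given by a group presentation. Second, $H = \langle C \rangle$ holds because the two elements of $C$ are exactly the generators listed in the presentation of $H$; equivalently, $C$ generates $H$ since the Cayley graph $\Cay(H,C) = G(n,k)$ is connected. Third, to see that $|C| = 2$ it suffices to observe that $\alpha \neq \gamma$: by~(\ref{eq:alpha}) the rotation $\alpha$ sends outer vertices to outer vertices, whereas by~(\ref{eq:gamma}) the inside-out map $\gamma$ sends the outer vertex $u_0$ to the inner vertex $v_0$, so the two maps cannot coincide.

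There is no genuine obstacle here, as the corollary merely restates the cited construction; the single point deserving a line of justification is the distinctness $\alpha \neq \gamma$ that yields $|C| = 2$. I would therefore present the argument in just these few lines.
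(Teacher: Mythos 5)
Your proposal is correct and matches the paper exactly: the paper gives no separate proof, deducing the corollary directly from Theorem~\ref{thm:cayleygroup} and Lovre\v{c}i\v{c} Sara\v{z}in's explicit representation $\Cay(H,\{\alpha,\gamma\})$, just as you do. Your extra remarks (that $C$ generates $H$ by the presentation, and that $\alpha\neq\gamma$ so $|C|=2$) are harmless trivial verifications the paper leaves implicit.
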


For a Cayley graph ${\rm Cay}(M,C)$, a color endomorphism is a graph endomorphism $\phi: M~\longrightarrow~M$ such that $\phi(m) c = \phi(m c)$ for all $m \in M$, $c \in C$. Another ingredient we will use in the proof of Theorem~\ref{thm:mon2gen} is the following variant of Lemma~\ref{lem:homomomo}.
\begin{theorem}\cite[Theorem 7.3.7]{Kna-19}\label{colorend} Let $M$ be a monoid with generating set $C \subseteq M$. Then, $M$ is isomorphic to the monoid of color endomorphism of ${\rm Cay}(M,C)$. Moreover, the isomorphism is given by $m \mapsto \lambda_m$, being  $\lambda_m$ the left-multiplication, i.e., $\lambda_m: M \longrightarrow M$ with $\lambda_m(m') = mm'$.
\end{theorem}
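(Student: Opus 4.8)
The final statement is Theorem~7.3.7 as quoted from \cite{Kna-19}: for a monoid $M$ with generating set $C$, the map $m \mapsto \lambda_m$ (left multiplication) is an isomorphism from $M$ onto the monoid of \emph{color endomorphisms} of $\mathrm{Cay}(M,C)$. The plan is to verify that $m\mapsto\lambda_m$ is (i) well-defined as a color endomorphism, (ii) a monoid homomorphism, (iii) injective, and (iv) surjective, where the last step is the one that actually uses that $C$ \emph{generates} $M$.

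\textbf{Well-definedness and homomorphism.} First I would check that each $\lambda_m$ is genuinely a color endomorphism of $D:=\mathrm{Cay}(M,C)$. By Lemma~\ref{lem:leftinCayley}, left multiplication already yields an endomorphism of $D$; here I additionally need the color-preserving condition $\lambda_m(m')\,c=\lambda_m(m'c)$, which is immediate from associativity since $(mm')c=m(m'c)$. So each $\lambda_m$ is a color endomorphism. Next, $\lambda_{m}\circ\lambda_{m'}=\lambda_{mm'}$ again by associativity, and $\lambda_e=\mathrm{id}$, so $m\mapsto\lambda_m$ is a monoid homomorphism into the monoid $\mathrm{ColEnd}(D)$ of color endomorphisms under composition.

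\textbf{Injectivity and surjectivity.} Injectivity is easy: if $\lambda_m=\lambda_{m'}$ then evaluating at the identity gives $m=\lambda_m(e)=\lambda_{m'}(e)=m'$. Surjectivity is the crux, and this is where $M=\langle C\rangle$ enters. Given any color endomorphism $\phi$, set $m:=\phi(e)$; I claim $\phi=\lambda_m$. The key observation is that a color endomorphism is completely determined by its value on $e$: for a generator $c\in C$ we have $\phi(c)=\phi(ec)=\phi(e)c=mc=\lambda_m(c)$, and then by induction along any word $c_{i_1}\cdots c_{i_r}$ representing an arbitrary element (possible precisely because $C$ generates $M$) the color-compatibility $\phi(xc)=\phi(x)c$ propagates the agreement, giving $\phi(x)=mx=\lambda_m(x)$ for all $x\in M$. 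Hence $\phi=\lambda_m$ lies in the image.

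\textbf{Main obstacle.} The only genuinely delicate point is surjectivity, and specifically the inductive step showing that color-compatibility together with generation forces $\phi$ to coincide with $\lambda_m$ everywhere: one must be careful that the factorization $x=c_{i_1}\cdots c_{i_r}$ is a right-multiplication word so that repeated application of $\phi(yc)=\phi(y)c$ is valid, and that the base case $\phi(e)=m$ anchors the induction correctly. Since this is a cited result \cite[Theorem 7.3.7]{Kna-19}, I would present it in this compact four-step form rather than reproving the general theory of color endomorphisms from scratch.
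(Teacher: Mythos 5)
Your proof is correct: the four steps (each $\lambda_m$ is a color endomorphism by associativity, $m\mapsto\lambda_m$ is a monoid homomorphism, injectivity via evaluation at $e$, and surjectivity because the color condition $\phi(xc)=\phi(x)c$ propagates $\phi(e)=m$ along any factorization $x=c_{i_1}\cdots c_{i_r}$, which exists exactly because $C$ generates $M$) constitute the standard argument for this result. The paper itself does not prove the statement but cites it from \cite[Theorem 7.3.7]{Kna-19}, and your argument is essentially the proof given there; the only cosmetic remark is that invoking Lemma~\ref{lem:leftinCayley} is not strictly needed, since the identity $\lambda_m(m')c=\lambda_m(m'c)$ already shows that arcs map to arcs.
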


The statement of the following lemma is similar to the one of Theorem~\ref{thm:mon2gen} but removing the hypothesis that the connection set $C$ generates $M$. It will be used in all the main results of this section.

\begin{lemma}\label{lm:technical} If $G(n,k)$ is a monoid graph ${\rm Cay}(M,C)$ with $|C| = 2$, then one of the following holds: \begin{itemize} \item[(a)] $(n,k)=(5,2)$ (Petersen graph),  \item[(b)] $(n,k)=(10,3)$ (Desargues graph) and there is an invertible $g \in C$ of order $6$, \item[(c)] $k^2 \equiv 1 \ ({\rm mod}\ n),$ \item[(d)] $k^2 \equiv \pm k\ ({\rm mod}\ n),$ or \item[(e)] $\gcd(n,k) \neq 1$ and there exists $g \in C$ such that $\lambda_g = \alpha^k$ or $\lambda_g = \alpha^{-k}$, where $\lambda_g$ and $\alpha$ are left-multiplication and rotation, respectively. \end{itemize}
\end{lemma}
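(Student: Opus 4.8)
The plan is to exploit the invertible generator provided by Lemma~\ref{lm:ciclo2} together with the fact that left multiplications are colour automorphisms. Write $C=\{g,h\}$ with $g$ invertible of order $o(g)>2$, as in Lemma~\ref{lm:ciclo2}. By Lemma~\ref{lem:leftinCayley} the left multiplication $\lambda_g$ is an endomorphism of $G(n,k)$, and since $g$ is invertible it is bijective, hence $\lambda_g\in\Aut(G(n,k))$. Being a colour automorphism it preserves the spanning subgraph $F$ formed by the $g$-coloured edges (a disjoint union of cycles covering $V$) and sends the cycle $(e,g,\dots,g^{o(g)-1})$ to itself, acting on it as a rotation by one step; consequently $\lambda_g$ has order exactly $o(g)>2$ as a permutation.

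First I would dispose of the case where $h$ is also invertible. A degree count at $e$, whose neighbours are $g$, $g^{-1}$ and the contributions of $h$, forces $h^2=e$; then the component of $e$ is the group $\langle g,h\rangle$, and since $G(n,k)$ is connected we get $M=\langle g,h\rangle$, a group. Thus $G(n,k)$ is a group graph and conclusion (c) follows from Theorem~\ref{thm:cayleygroup}. From here on $h$ is not invertible, so $M$ is not a group and $F$ together with the third edges supplied by $h$ must reconstruct $G(n,k)$.

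The heart of the argument is to locate $\lambda_g$ inside $\Aut(G(n,k))$ using Theorem~\ref{thm:autgroup}. Since $\lambda_g$ has order $>2$ it cannot be a reflection $\alpha^j\beta$, so if $\lambda_g$ preserves the partition $\{V_I,V_O\}$ it must be a rotation $\lambda_g=\alpha^j$. Writing the $g$-cycle through $e$ as the $\alpha^j$-orbit of $e$ and using that its consecutive vertices must be adjacent in $G(n,k)$, one finds $j\equiv\pm1$ when $e\in V_O$ and $j\equiv\pm k$ when $e\in V_I$. The inner case with $\gcd(n,k)\neq1$ is exactly conclusion (e). In the remaining cases one may normalise to $e=u_0$, $\lambda_g=\alpha$, whence the outer vertices are the powers $g^i=u_i$ and $h=v_0$, the spokes are the $h$-edges $u_ih=v_i$, and left-cancellation by $g$ turns the inner-rim identity $v_ig=v_{i\pm k}$ into the relation $hg=g^{\pm k}h$. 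Determining $h^2$ among the few admissible neighbours of $v_0$ and then imposing associativity on the triple products of $\{g^ah,g^bh,g^ch\}$ yields precisely $k^2\equiv\pm k\ ({\rm mod}\ n)$, which is conclusion (d).

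It remains to treat the case where $\lambda_g$ interchanges $V_I$ and $V_O$; this can happen only when $G(n,k)$ is transitive, so by Corollary~\ref{cor:autdi} either $k^2\equiv\pm1\ ({\rm mod}\ n)$ or $(n,k)=(10,2)$. If $k^2\equiv1$ we are in (c); the dodecahedron is ruled out by a short cycle-length argument; and if $k^2\equiv-1$ the $g$-cycle through $e$ alternates between $V_I$ and $V_O$, so it runs through spokes, and matching its length $o(g)$ against the girth constraints together with associativity leaves only $(n,k)=(5,2)$ and $(n,k)=(10,3)$, giving (a) and (b), the surviving value in the Desargues case being $o(g)=6$. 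I expect the main obstacle to be this last, orientation-reversing case and, within the orientation-preserving case, the bookkeeping needed to pin down $h^2$ and to extract the exact congruence $k^2\equiv\pm k$ from associativity while the non-invertibility of $h$ blocks the usual group-theoretic shortcuts.
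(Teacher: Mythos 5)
Your handling of the non-exceptional cases (invertible $h$ gives a group and hence (c); $\lambda_g$ a rotation $\alpha^{\pm1}$ or $\alpha^{\pm k}$ gives (d) or (e)) tracks the paper's argument, but the proof breaks at its declared ``heart'': the dichotomy that $\lambda_g$ either preserves the partition $\{V_I,V_O\}$ or interchanges it, justified by Theorem~\ref{thm:autgroup}. That theorem explicitly excludes seven exceptional pairs, and two of them, $(10,2)$ and $(10,3)$, are neither group graphs nor the Petersen graph, so the lemma must be proved for them; their automorphism groups are $A_5\times\Z_2$ and $S_5\times\Z_2$, and these contain automorphisms of order $>2$ that neither preserve nor swap the partition. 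Concretely: in the dodecahedron, the face $(u_0,u_1,u_2,v_2,v_0)$ is a $5$-cycle rotated by an order-$5$ automorphism that sends $u_0\mapsto u_1$ but $u_2\mapsto v_2$; in the Desargues graph there is an order-$6$ automorphism rotating a $6$-cycle such as $(u_0,u_1,v_1,v_4,v_7,v_0)$, which is exactly the configuration of conclusion (b). Your proposal never confronts this ``mixing'' case, yet it is precisely the case responsible for conclusion (b) and for routing $G(10,2)$ into conclusion (e) -- note that the dodecahedron cannot be ``ruled out'' inside this lemma at all (that only happens later, in Proposition~\ref{prop:mon2gendentopar}); here it must be allowed to land in (e). The paper closes this hole by an exhaustive SageMath search over $\Aut(G(10,2))$ and $\Aut(G(10,3))$, showing that the only lengths of cycles admitting a rotating automorphism are $5,10$ and $6,10$ respectively, and that up to composing with an automorphism such a cycle is an inner or the outer rim (or, for length $6$, case (b) holds).

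A second, related error: the branch where $\lambda_g$ genuinely interchanges $V_I$ and $V_O$ -- from which you propose to extract conclusions (a) and (b) -- is vacuous. If $\lambda_g$ swapped the two sides, the cycle $(e,g,g^2,\ldots)$ would alternate between inner and outer vertices, so \emph{every} edge of it would be a spoke; but the spokes form a perfect matching, so no cycle contains two incident spokes. Hence nothing can come out of that branch: in the paper, (a) is simply an admissible conclusion assumed away at the outset, and (b) arises from the mixing automorphism above, not from a swapping one. Finally, a smaller bookkeeping gap in your rotation case: from $\lambda_g=\alpha$ you write the inner-rim identity $v_ig=v_{i\pm k}$, but $v_ig=v_i$ (i.e.\ $hg=h$, a $g$-loop at every inner vertex) is equally consistent with the underlying graph, since loops are suppressed; the paper needs a separate argument to kill this (its Case 1, where the inner cycle becomes the directed cycle ${\rm Cay}(V',\{h\})$ and one derives the contradiction $h^3=h^2$). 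Your associativity computation only covers the analogue of the paper's Case 2.
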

\begin{proof}Let $G(n,k)$ be a monoid graph that is the underlying graph of ${\rm Cay}(M,C)$, where $M$ is a monoid and $|C| = 2$, and assume that $G(n,k)$ is not a group Cayley graph and $(n,k) \not= (5,2)$.  By  Lemma~\ref{lm:ciclo2}, there is an invertible element $g \in C$ such that $g^2 \neq 1$. In particular,
\begin{itemize}
\item[(1)] there exists an induced cycle $C' = (x_0,\ldots,x_{\ell-1},x_{\ell} = x_0)$ of length $\ell := o(g)$
\item[(2)] there exists $\tau \in {\rm Aut}(G(n,k))$ of order $\ell$ such that $\tau(x_i) = x_{i+1}$ for $i = 0,\ldots,\ell-1$.
\end{itemize}
Indeed, $C' = (e,g,g^2,\ldots,g^{o(g)-1},e)$ satisfies (1), and $\lambda_g$ satisfies (2).   

\noindent \emph{Claim: }Either (b) holds or there exists $\varphi \in {\rm Aut}(G(n,k))$ such that $\varphi(C')$ is either the exterior or an interior cycle.

\noindent \emph{Proof of the claim.}    We separate four cases:

\emph{Case $(n,k) = (10,2)$.} A computer assisted exhaustive search using SageMath \cite{sage} through all the automorphisms of $G(10,2)$ (its automorphism group is isomorphic to $A_5 \times \mathbb Z_2$) shows that the only values $\ell$ such that (1) and (2) hold are $\ell = 5$ and $\ell = 10$. Moreover,  for $\ell = 5$,  there is an automorphism $\varphi$ such that $\varphi(C')$ is an inner cycle, and  for $\ell = 10$,  there is an automorphism $\varphi$ such that $\varphi(C')$ is the outer cycle.

\emph{Case $(n,k) = (10,3)$.} Again using SageMath \cite{sage} we analyze the automorphisms of $G(10,3)$ (its automorphism group is isomorphic to $S_5 \times \mathbb Z_2$) to find that the only values $\ell$ such that (1) and (2) hold are $\ell = 6$ and $\ell = 10$. If $\ell = 6$, then (b) holds. Moreover, if $\ell = 10$, then there is an automorphism $\varphi$ such that $\varphi(C')$ is the outer cycle.

	\emph{Case $k^2 \not\equiv \pm 1 \ ({\rm mod}\ n)$.} By  Corollary~\ref{cor:autdi} we have that ${\rm Aut}(G(n,k)) = \langle \alpha, \beta \rangle \cong D_{n}$, and the only elements of order $> 2$ are of the form $\alpha^i$, then $\lambda_g = \alpha^i$. Since $C'$ is a cycle containing the edge $\{e,g\} = \{e, \alpha^i(e)\}$, then either $i \in \{1,n-1\}$ and $C_g$ is the exterior cycle, or $i \in \{k,n-k\}$ and $C_g$ is an interior cycle.

\emph{Case $k^2 \equiv -1 \ ({\rm mod}\ n)$ and $(n,k) \neq (10,3)$.} Since $\gcd(n,k) = 1$ and $k \neq 1$, then there are no $3$-cycles or $4$-cycles in $G(n,k)$. Hence $C'$ is a cycle of length $o(g) > 4$. By Theorem~\ref{thm:autgroup}, every $\sigma \in {\rm Aut}(G(n,k))$, can be written as $\sigma =  \alpha^i \gamma^j$ for some $i \in \{0,\ldots,n-1\}$ and $j \in \{0,1,2,3\}$. Moreover, if $j \neq 0$, then $k^{3j} + k^{2j} + k^j + 1 \equiv (-1)^j k^{j} + k^j + (-1)^j + 1 \equiv 0 \ ({\rm mod}\ n)$ and $\sigma^4 = {\rm id}$. Hence, the only elements of order $> 4$ in ${\rm Aut}(G(n,k))$ are of the form $\alpha^i$ for some $1 \leq i < n$. Hence we get that $\lambda_g = \alpha^i$. Proceeding as in the previous case we get that $C_g$ is  either the exterior or the interior cycle. 

Thus, the claim follows.

%
%
%\begin{figure}
%\includegraphics[scale=0.7]{10ciclosdedodeca.pdf}
%\caption{Los puntos rojos estan a distancia 2 del ciclo y los puntos verdes tienen 2 puntos del ciclo a distancia 1} \label{fig:ciclosendodeca}
%\end{figure}

As a consequence, one can assume without loss of generality that $C'$ itself is either the exterior cycle (and $\lambda_g = \alpha$ or $\lambda_g = \alpha^{-1}$), or an interior cycle (and $\lambda_g = \alpha^k$ or $\lambda_g = \alpha^{-k}$). 

If $C'$ has length $< n$, then (e) holds because $C'$ is necessarily an interior cycle and $\gcd(n,k) > 1$. Hence, it remains to consider when $C'$ is a a cycle of length $n$. We may also assume  without loss of generality that it is the external cycle (if $\gcd(n,k) = 1$, taking $k'$ the inverse of $k$ modulo $n$ we have that $G(n,k) \cong G(n,k')$ and the isomorphism interchanges inner and outer vertices) and $\lambda_g = \alpha$ or $\lambda_g= \alpha^{- 1}$.

We denote $C = \{g,h\}$. We observe that $h$ is not invertible; otherwise $M = \langle g,h \rangle$ is a group.  In particular, $h \notin \{g,g^{-1}\}$ and, then, $h$ is the inner neighbor of $e$. 
 Moreover,  we have that $g^i h \notin V_O$ for all $i$, so $g^i h$ is the inner neighbor of $g^i$. As a consequence the vertex set of the inner cycle containing $h$ is $V' = \{g^{\lambda k} h \, \vert \, 0 \leq \lambda < n/ \gcd(n,k)\}$.  We split the proof in two cases.

\emph{Case 1:} $h g = h$.  As a consequence, for all $x \in V'$ we have $x g = x$.  Consider the graph $D = {\rm Cay}(V', \{h\})$, we know that $D$ has to be a directed cycle (all the vertices in $D$ have out-degree $1$ and the underlying undirected graph is a cycle) and, hence, $V' = \{h^i \, \vert \, i \in \mathbb N\}$ is a set with at least $3$ elements. Depending on the orientation of $D$, either $h^2 = g^k h$ or $h^2 = g^{-k}h$. In the first case, using that $h g^k = h$ we get  $h^3 = h (g^k h) = (h g^k) h = h^2$, a contradiction.  In the second one we have that $h^2 = g^{-k} h$ and using again that $h g^k = h$ we get  $h^3 = (h g^k) h^2 = h (g^k h^2) = h^2$, a contradiction.

\emph{Case 2:} $hg \neq h$. Since $hg \neq e$, then $hg$ is an inner neighbor of $h$, we have that $hg = g^{\pm k} h$.
We also take $\mu \in \{-1,0,1\}$ so that $h^2 = h g^\mu$.  We claim that $h y  = y h$ for all $y \in V'$. Indeed, if $y = h g^i = g^{\pm k i} h \in V',$ then $h y = h (hg^i) = h^2 g^i = h g^{i+ \mu} = y g^{\mu}$ and $y h = g^{\pm ki } h^2 = g^{\pm ki} (h g^\mu) = (g^{\pm ki} h) g^{\mu} =  y g^{\mu}$. Taking $y = hg  \in V'$, we have that
\begin{itemize}
\item $ y h = (hg) h =  g^{\pm k} h^2 = g^{\pm k} h g^{\mu} = g^{\pm k} g^{\pm k \mu}h,  $ and
\item $ h y = h (hg) = h (g^{\pm k} h) = g^{(\pm k)^2} h^2 = g^{k^2} g^{\pm k \mu} h$.
\end{itemize}
 Using that $y h = hy$ we finally get that $g^{\pm k} = g^{k^2}$. Hence $k^2 \equiv \pm k \ ({\rm mod}\ n)$.
 
\end{proof}

Now we can proceed with the proof of the main result.

\noindent {\it Proof of Theorem~\ref{thm:mon2gen}} $(\Longleftarrow)$ Follows from Proposition~\ref{prop:Petersen} (for $G(5,2)$), from Corollary~\ref{cor:petgroup} (for $k^2 \equiv 1 \ ({\rm mod}\ n)$), and from Theorem~\ref{thm:Cay1}  (for $k^2 \equiv \pm k \ ({\rm mod}\ n)$).

\noindent  $(\Longrightarrow)$ It only remains to prove that (b) and (e) in Lemma~\ref{lm:technical} cannot hold.

Assume (b) holds and that $\tau\in\Aut(G(10,3))$ satisfies conditions (1) and (2) in the proof of Lemma~\ref{lm:technical} with respect to a $6$-cycle. An exhaustive search with SageMath \cite{sage} shows, that up to automorphism there can be only one such cycle. Figure~\ref{fig:hypotheticaldesargues} displays the Desargues graph where the 6-fold rotation preserves the outer cycle $X$, generated by the invertible element $g\in C$. The blue arcs leaving this cycle correspond to the other generator $h\in C$. Since $\langle C \rangle =M$ each vertex of the graph has to be reachable by a directed path from $e\in X$. By the rotation symmetry this implies that both inner (white) vertices must be sinks. Since $g$ is invertible, no element can have indegree larger than one with respect to $g$, both inner vertices are blue sinks and have a blue loop. In particular, $h^2=h^3$.
Since the neighbors of $X$ already have blue outdegree, in order to reach the inner cycle $Y$, the other arc leaving the neighbors of $X$ has to be black. Since $g$ is invertible, these edges must be black digons. Thus the arcs on $Y$ the inner cycle must all be blue, in particular $Y$ is directed and consists of six elements $\{hg,hgh,hgh^2,hgh^3,hgh^4,hgh^5\}$. This contradicts $h^2=h^3$.

\begin{figure}[ht]
\begin{center}
\includegraphics[width = .4\textwidth]{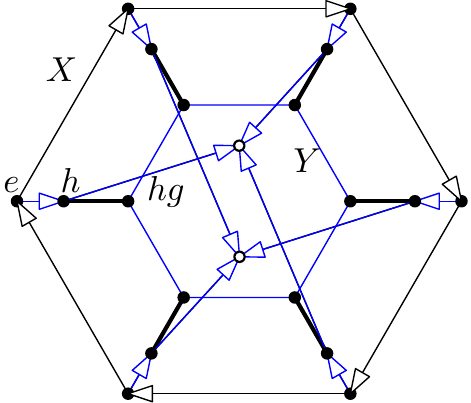}
\caption{A hypothetical representation of the Desargues graph with two generators one being invertible of order $6$.}
\label{fig:hypotheticaldesargues}
\end{center}
\end{figure}

Assume (e) holds and consider $C'$ is an inner cycle of length $n/\gcd(n,k) < n$. Then $\lambda_g = \alpha^{\mu k}$ for some $\mu \in \{-1,1\}$.
The outer neighbor of $e$ is $h$. We separate three cases and we are going to prove that none of them is possible.

\emph{Case 1:} $hg = h$. Since $M = \langle g,h \rangle$, then  $x g = x$ for all $x \notin C'$. As a consequence, the number of edges of $G(n,k)$ is at most $2n +  (n/\gcd(n,k)) < 3n = |E(G(n,k))|.$ 

\emph{Case 2:}  $hg \neq h$ and  $hg^2 = h$. Since $M = \langle g,h \rangle$, then  $x g^2 = x$ for all $x \notin C'$. As a consequence, the number of edges of $G(n,k)$ is at most $2n +  \frac{n}{\gcd(n,k)} + \frac{1}{2} \left(n - \frac{n}{\gcd(n,k)}\right) = \frac{5}{2} n + \frac{n}{2 \gcd(n,k)} < 3n = |E(G(n,k))|$.

\emph{Case 3:} $hg \neq h$ and $hg^2 \neq h$. We consider the closed walk $w = (h, hg, hg^2,\ldots,hg^{o(g)} = h)$ of length $o(g) < n$ in the  Cayley graph  ${\rm Cay}(V, \{g\})$ . Since the outer rim has length $n$, then the walk $w$ has to pass through (at least) two spokes connecting the outer rim with the same inner rim. However, since $\lambda_g = \alpha^{\pm k}$ is a color endomorphism (see Theorem~\ref{colorend}), this implies that these spokes are bioriented, but this contradicts that fact that the out-degree of every vertex in  ${\rm Cay}(V, \{g\})$ is $1$.
 \hfill \qed

Theorem~\ref{thm:mon2gen} is no longer true if we drop the assumption that  $M = \langle C \rangle$ (i.e., when $C$ is not a set of generators of $M$). Indeed, in Proposition~\ref{prop:Desargues}, we proved that the Desargues graph $G(10,3)$ is a monoid graph ${\rm Cay}(M,C)$ with $|C| = 2$. We are not aware of any further  generalized Petersen graphs with the same behavior. Indeed, we believe that this is the only exception, and that the generalized Petersen graphs that are monoid Cayley graphs with connection set of size $2$ are the ones described in Theorem~\ref{thm:mon2gen} and the Desargues graph.

\begin{conjecture}\label{conj:Desargues}
 The generalized Petersen graph $G(n,k)$ is a monoid graph ${\rm Cay}(M,C)$ with $|C| = 2$ if and only if one of the following holds: \begin{itemize} 
 \item[(a)] $(n,k)=(5,2)$ (Petersen graph),  \item[(b)] $(n,k)=(10,3)$ (Desargues graph), \item[(c)] $k^2 \equiv 1 \ ({\rm mod}\ n)$, \item[(d)] $k^2 \equiv \pm k\ ({\rm mod}\ n)$.
 \end{itemize}
\end{conjecture}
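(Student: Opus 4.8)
The plan is to prove both implications while leaning heavily on the constructions and on Lemma~\ref{lm:technical} already established. For $(\Longleftarrow)$ each of the four families has a representation with a two-element connection set exhibited earlier: the Petersen graph by Proposition~\ref{prop:Petersen}, the Desargues graph by Proposition~\ref{prop:Desargues} (its connection set has two elements, although it does not generate), the case $k^2\equiv 1\ ({\rm mod}\ n)$ by Corollary~\ref{cor:petgroup}, and the case $k^2\equiv\pm k\ ({\rm mod}\ n)$ by Theorem~\ref{thm:Cay1}. So this direction is just a collation of the previous results.

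For $(\Longrightarrow)$ I would invoke Lemma~\ref{lm:technical}, which already does most of the work: if $G(n,k)$ is a monoid graph ${\rm Cay}(M,C)$ with $|C|=2$, then one of its cases (a)--(e) holds. Its cases (a), (b), (c), (d) are verbatim the cases (a), (b), (c), (d) of the conjecture (case (b) being precisely the Desargues graph). Hence the only thing separating the conjecture from the lemma is to show that case (e) --- where $\gcd(n,k)>1$ and some $g\in C$ has $\lambda_g=\alpha^{\pm k}$, so the cycle $C'=\langle g\rangle$ is an inner cycle of length $n/\gcd(n,k)<n$ --- produces no generalized Petersen graph outside families (a)--(d).

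Here I would replay the three-case analysis used for case (e) in the proof of Theorem~\ref{thm:mon2gen}, writing $C=\{g,h\}$ with $h$ the spoke neighbour of $e$. The subcase $hg\neq h$ and $hg^2\neq h$ carries over unchanged: it uses only that $\lambda_g$ is a color endomorphism (which holds by associativity alone, \emph{not} by generation) together with the fact that every vertex has out-degree one along the $g$-arcs, to force two spokes of a short $g$-walk to be bioriented and contradict that out-degree. So this subcase needs no new idea.

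The hard part will be the two subcases $hg=h$ and $hg^2=h$. In Theorem~\ref{thm:mon2gen} these were eliminated by an edge count: from $hg=h$ (respectively $hg^2=h$) together with $M=\langle g,h\rangle$ one gets $xg=x$ (respectively $xg^2=x$) for every $x\notin C'$, forcing $|E(G(n,k))|<3n$. Dropping the generating hypothesis destroys this count, since $M$ may now contain elements that are not words in $g$ and $h$, and these could supply the missing edges --- this is exactly the loophole through which the Desargues graph enters, and the reason the statement is still only a conjecture. To close it I would study these extra elements directly: as a graph automorphism $\lambda_g=\alpha^{\pm k}$ partitions the whole vertex set $M$ into $2\gcd(n,k)$ orbits of size $n/\gcd(n,k)$, and one would have to track how the $h$-arcs distribute across these orbits and across the spokes so as to realize a cubic graph isomorphic to $G(n,k)$. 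The aim would be to prove that any such completion either forces $k^2\equiv\pm k$ or $k^2\equiv 1\ ({\rm mod}\ n)$ or is impossible; I expect that what is genuinely missing is a classification of the admissible ``$h$-dynamics'' on the orbits of $\lambda_g$, and that this structural analysis, rather than a single clever identity, is the true obstacle.
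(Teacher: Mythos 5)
The statement you are attacking is stated in the paper as Conjecture~\ref{conj:Desargues}: the authors themselves do not prove it, and they say explicitly that they can establish it only under the extra hypotheses $\gcd(n,k)=1$ (Proposition~\ref{prop:mon2genprimos}) or $n/\gcd(n,k)$ odd (Proposition~\ref{prop:mon2gendentopar}). Your analysis of where the difficulty sits is accurate and matches theirs: the $(\Longleftarrow)$ direction is indeed a collation of Proposition~\ref{prop:Petersen}, Proposition~\ref{prop:Desargues}, Corollary~\ref{cor:petgroup} and Theorem~\ref{thm:Cay1}; the $(\Longrightarrow)$ direction reduces via Lemma~\ref{lm:technical} (which does not assume $M=\langle C\rangle$) to excluding its case (e); and within case (e) the subcase $hg\neq h$, $hg^2\neq h$ does survive the loss of the generating hypothesis, because left multiplication is a color endomorphism by associativity alone, and the biorientation-of-spokes contradiction needs only that together with out-degree one along the $g$-arcs --- this is exactly the mechanism the paper reuses, without generation, in the proof of Proposition~\ref{prop:mon2gendentopar}.

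As a proof of the statement, however, your proposal has a genuine gap, and you name it yourself: the subcases $hg=h$ and $hg^2=h$ of case (e), where the paper's edge count collapses once $M$ may contain elements that are not words in $g$ and $h$ (the very loophole through which the Desargues representation $\Cay(\widetilde{M},\{(1,1),(6,0)\})$ enters). Your suggested remedy --- classify the possible actions of the $h$-arcs on the $2\gcd(n,k)$ orbits of $\lambda_g=\alpha^{\pm k}$ --- is a program, not an argument: no part of it is carried out, and it is precisely the configuration left open by the paper, namely $\gcd(n,k)>1$ with $n/\gcd(n,k)$ even, that this program would have to settle. So what you have written is a correct reduction of the conjecture to its hard core, consistent with the paper's partial results, but it proves nothing beyond what Lemma~\ref{lm:technical}, Proposition~\ref{prop:mon2genprimos} and Proposition~\ref{prop:mon2gendentopar} already give.
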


However, we are only able to prove it under the additional assumptions that $\gcd(n,k) = 1$ (Proposition~\ref{prop:mon2genprimos}), or that $n / \gcd(n,k)$ is odd (Proposition~\ref{prop:mon2gendentopar}).

\begin{proposition}
\label{prop:mon2genprimos}Let $1 \leq k < n/2$ such that $\gcd(n,k) = 1$. The generalized Petersen graph $G(n,k)$ is a monoid graph ${\rm Cay}(M,C)$ with $|C| = 2$ if and only if one of the following holds: \begin{itemize} \item[(a)] $(n,k)=(5,2)$ (Petersen graph), 
\item[(b)] $(n,k)=(10,3)$ (Desargues graph),  \item[(c)] $k^2 \equiv 1 \ ({\rm mod}\ n).$ \end{itemize}
\end{proposition}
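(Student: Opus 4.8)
The plan is to obtain this proposition as an essentially immediate specialization of Lemma~\ref{lm:technical} once the hypothesis $\gcd(n,k)=1$ is imposed, together with the three positive constructions already in hand. The point is that Lemma~\ref{lm:technical} applies verbatim to any representation ${\rm Cay}(M,C)$ with $|C|=2$ (it does \emph{not} require $M=\langle C\rangle$), which is exactly the generality we want here, and that coprimality of $n$ and $k$ prunes its five alternatives down to the three listed.

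For the direction $(\Longleftarrow)$ I would simply cite the earlier results. Case (a) is Proposition~\ref{prop:Petersen}, which realizes the Petersen graph as ${\rm Cay}(M,\{1,6\})$ with a connection set of size two. Case (b) is Proposition~\ref{prop:Desargues}, which realizes the Desargues graph as ${\rm Cay}(\widetilde M,\{(1,1),(6,0)\})$, again with $|C|=2$; here it is crucial that we do \emph{not} insist $C$ generate $\widetilde M$, which is precisely why $(10,3)$ is admissible in this proposition while being excluded from Theorem~\ref{thm:mon2gen}. Case (c) is Corollary~\ref{cor:petgroup}, giving a group -- hence monoid -- representation with $|C|=2$ whenever $k^2\equiv 1\ ({\rm mod}\ n)$.

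For the direction $(\Longrightarrow)$ I would feed the hypothesized representation into Lemma~\ref{lm:technical}, obtaining one of its five conclusions (a)--(e), and then eliminate (d) and (e) using $\gcd(n,k)=1$. Alternative (e) is excluded outright, since it explicitly demands $\gcd(n,k)\neq 1$. Alternative (d), namely $k^2\equiv \pm k\ ({\rm mod}\ n)$, collapses into (c): as $k$ is invertible modulo $n$, cancelling $k$ gives $k\equiv \pm 1\ ({\rm mod}\ n)$; the range $0<k<n/2$ rules out $k\equiv -1$ (which would force $k=n-1\ge n/2$), leaving $k=1$, and then $k^2=1\equiv 1\ ({\rm mod}\ n)$, so (c) holds. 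Thus only (a), (b), (c) survive, which is exactly the claimed characterization.

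Since the substantial structural analysis has already been carried out inside Lemma~\ref{lm:technical}, the only genuinely delicate point remaining is the reduction of (d) to (c); this is the step I would write out with care, making explicit use of the invertibility of $k$ modulo $n$ and of the inequality $0<k<n/2$ to discard the value $k=n-1$. Everything else is bookkeeping across the five cases of the lemma, paired with the three citations above, so I do not anticipate any serious obstacle beyond that one cancellation argument.
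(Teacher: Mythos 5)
Your proposal is correct and follows essentially the same route as the paper: the same three citations (Propositions~\ref{prop:Petersen} and~\ref{prop:Desargues}, Corollary~\ref{cor:petgroup}) for the backward direction, and Lemma~\ref{lm:technical} with $\gcd(n,k)=1$ eliminating alternatives (d) and (e) for the forward direction. If anything, your treatment of (d) is slightly more careful than the paper's: the paper asserts $k^2\equiv\pm k\ ({\rm mod}\ n)$ is outright impossible when $\gcd(n,k)=1$ and $k<n/2$, which overlooks $k=1$, whereas you correctly observe that this boundary case simply collapses into condition (c).
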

\begin{proof}
$(\Longleftarrow)$ Follows from Proposition~\ref{prop:Petersen} (for $G(5,2)$), from Proposition~\ref{prop:Desargues} (for $G(10,3)$), and from Corollary~\ref{cor:petgroup} (for $k^2 \equiv 1 \ ({\rm mod}\ n)$).

\noindent  $(\Longrightarrow)$  Follows directly from Lemma~\ref{lm:technical} and observing that neither (d) nor (e) can  hold because $\gcd(n,k) = 1$. Indeed, if $k^2 \equiv \pm k \ ({\rm mod}\ n)$ and $\gcd(n,k) = 1$, then $n$ divides $k \mp 1$, but this cannot happen because  $k < n/2$. 
\end{proof}

\begin{proposition}
\label{prop:mon2gendentopar}Let $1 \leq k < n/2$ such that $n/\gcd(n,k)$ is odd. The generalized Petersen graph $G(n,k)$ is a monoid graph ${\rm Cay}(M,C)$ with $|C| = 2$ if and only if one of the following holds: \begin{itemize} \item[(a)] $(n,k)=(5,2)$ (Petersen graph), \item[(b)] $k^2 \equiv 1 \ ({\rm mod}\ n)$, \item[(c)] $k^2 \equiv \pm k\ ({\rm mod}\ n)$. \end{itemize}
\end{proposition}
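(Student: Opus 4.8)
The plan is to reduce everything to Lemma~\ref{lm:technical} and then to refute its case (e) using that $n/\gcd(n,k)$ is odd. For the implication $(\Longleftarrow)$ I would argue exactly as in Theorem~\ref{thm:mon2gen}, citing Proposition~\ref{prop:Petersen} for $(5,2)$, Corollary~\ref{cor:petgroup} for $k^2\equiv 1\ ({\rm mod}\ n)$, and Theorem~\ref{thm:Cay1} for $k^2\equiv\pm k\ ({\rm mod}\ n)$. For $(\Longrightarrow)$, Lemma~\ref{lm:technical} leaves five possibilities; cases (a), (c), (d) are literally the claimed conclusions (a), (b), (c), while case (b) cannot occur here because the Desargues graph $G(10,3)$ has $n/\gcd(n,k)=10$ even, violating the hypothesis. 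Thus the whole content is to show that case (e) is impossible when $n/d$ is odd, where I write $d:=\gcd(n,k)\ge 2$.

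In case (e) we are given an invertible $g\in C$ with left multiplication $\lambda_g=\alpha^{\pm k}$, the cycle $C'=(e,g,\dots,g^{n/d-1})$ is one inner rim (so $o(g)=n/d$, odd), and $h$ is the remaining generator, which is the outer spoke-neighbour of $e$. The engine of the argument is right translation $\rho_g\colon x\mapsto xg$. Since $g^{n/d}=e$, the order of $\rho_g$ divides the odd number $n/d$; hence $\rho_g$ is a permutation all of whose cycles have odd length, and in particular it has no transposition, i.e.\ there is no pair of antiparallel $g$-arcs. This single observation is what replaces the edge-counting that was available in Theorem~\ref{thm:mon2gen} under the hypothesis $M=\langle C\rangle$.

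I would then split on $hg$, as in the proof of Theorem~\ref{thm:mon2gen}. If $hg\ne h$ but $hg^2=h$, then $\{h,hg\}$ is a $2$-cycle of $\rho_g$, contradicting oddness. If $hg\ne h$ and $hg^2\ne h$, the $\rho_g$-orbit of $h$ is a simple directed $g$-cycle of length $o(g)=n/d<n$ that begins on the outer rim; being shorter than the rim it cannot run all the way around it and must leave through a spoke, and then the colour-endomorphism property of $\lambda_g=\alpha^{\pm k}$ forces a bidirected spoke, i.e.\ again a $2$-cycle of $\rho_g$. This is verbatim Case~3 of Theorem~\ref{thm:mon2gen} and never used generation; oddness of $n/d$ is exactly what rules out the offending $2$-cycle.

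The remaining case $hg=h$ is where I expect the real work. Here $\rho_g$ fixes $h$, so the short-walk argument degenerates and the counting argument is unavailable. My plan is to exploit that $\lambda_g=\alpha^{\pm k}$ is a colour automorphism of the whole Cayley graph, so the assignment of colours and orientations to the edges of $G(n,k)$ is invariant under $\alpha^{\pm k}$; it is therefore constant on the $2d$ vertex-orbits and $3d$ edge-orbits of $\alpha^{\pm k}$, which collapses the situation to a finite gain-graph on $2d$ vertices. In this quotient, $hg=h$ forces $g$-loops on the entire outer orbit $\{g^ih\}$ of $h$, and I would trace the forced $g$- and $h$-arcs through the other inner cycles and outer residue classes, using repeatedly that $\rho_g$ has odd order and no $2$-cycle, to arrive either at an antiparallel $g$-pair or at a closed $g$-walk whose length is forced to be even, contradicting that $n/d$ is odd. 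Checking that this bookkeeping closes over the finitely many admissible local configurations is the main obstacle.
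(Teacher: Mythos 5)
Your reduction to Lemma~\ref{lm:technical}, the exclusion of its case (b) by parity of $n/\gcd(n,k)$, and your two subcases with $hg\neq h$ are all sound: the observation that $\rho_g$ is a permutation of odd order (hence has no $2$-cycles) is exactly the mechanism of the paper's proof, which phrases it as an odd closed walk argument plus the color-endomorphism/biorientation trick and, importantly, applies it uniformly to \emph{every} outer vertex $y$ to conclude $yg=y$ (not only to $y=h$). The genuine gap is your Case 1 ($hg=h$), which you leave as a plan rather than a proof --- and the plan cannot succeed. Write $d=\gcd(n,k)$. Once $yg=y$ holds for all outer $y$ (which follows in your Case 1 by re-running the walk argument on the other outer residue classes), the rotation-invariant colored digraph is completely forced and is \emph{combinatorially consistent}: every outer vertex carries a $g$-loop, the outer rim is a directed $h$-cycle, every inner rim is a directed $g$-cycle of odd length $n/d$, and every spoke is an $h$-arc pointing outward. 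This configuration respects $\lambda_g=\alpha^{\pm k}$, has no antiparallel pair of $g$-arcs, and has no even closed $g$-walk; so no amount of bookkeeping in the quotient gain graph on $2d$ vertices can produce the parity contradiction you are aiming for. The obstruction in this case is not combinatorial but algebraic.

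The paper closes this case as follows. Since all outer vertices have $g$-loops, the $n$ rim edges can only be realized by the $n$ outer $h$-arcs, so the outer rim is the directed cycle $(h,h^2,\ldots,h^n,h^{n+1}=h)$ with all $n$ powers of $h$ distinct (here $h$ is outer because $h\notin\{g,g^{-1}\}$). Because $\lambda_{g^{\pm 1}}=\alpha^{\pm k}$ rotates this cycle by $k$ steps, one choice of sign gives $g^{\pm 1}h=h^{k+1}$, while $hg=h$ gives $hg^{\pm 1}=h$. Associativity then yields $h^2=(hg^{\pm 1})h=h(g^{\pm 1}h)=h\cdot h^{k+1}=h^{k+2}$, forcing $k\equiv 0\ ({\rm mod}\ n)$, a contradiction with $0<k<n/2$. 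So your Cases 2 and 3 can stand essentially as written, but Case 1 must be replaced by this computation (or some other argument that genuinely uses the monoid multiplication rather than only the colored-digraph structure).
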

\begin{proof}$(\Longleftarrow)$ Follows from Proposition~\ref{prop:Petersen} (for $G(5,2)$), from Corollary~\ref{cor:petgroup} (for $k^2 \equiv 1 \ ({\rm mod}\ n)$), and from Theorem~\ref{thm:Cay1}  (for $k^2 \equiv \pm k \ ({\rm mod}\ n)$).

$(\Longrightarrow)$ By Lemma~\ref{lm:technical} we just have to justify why (e) cannot hold. 
 So assume that the cycle $C'$ in the proof of Lemma~\ref{lm:technical} is an inner cycle, that $o(g) = n/\gcd(n,k)$ is odd and $o(g) < n$. Take $y \in V_O$ and let us prove  that $y g = y$. Indeed, consider the odd closed walk $w = (y, yg, yg^2,\ldots,yg^{o(g)} = y)$ of length $o(g) < n$ in the Cayley graph  ${\rm Cay}(V, \{g\})$. Since the outer cycle has length $n$ and $o(g) < n$ and is odd, then either $yg = y$ or the walk $w$ has to pass through (at least) two spokes connecting the outer cycle with the same inner cycle. Let us confirm that the latter is not possible. Since $\lambda_g$ is a color endomorphism (see Theorem~\ref{colorend}), then these spokes are bioriented, but this contradicts the fact that the out-degree of every vertex in  ${\rm Cay}(V, \{g\})$ is $1$ and the walk is odd. Thus $yg = y$. As a consequence, the outer rim is $(h,h^2,\ldots,h^{n+1} = h)$. However, in this case $h^{k+1} = g^{\pm 1} h$, which implies that $h^2 = (h g^{\pm 1}) h = h (g^{\pm 1} h) = h^{k+2}$, a contradiction.
\end{proof}

As a consequence of Proposition~\ref{prop:mon2gendentopar}, one gets that the dodecahedron $G(10,2)$ is not a monoid graph $\Cay(M,C)$ with $|C|=2$. However, as we have seen in Proposition~\ref{pr:dodeca} it is a monoid graph with $|C|=3$.

\section{Conclusions}
In the present paper we went beyond the classical symmetry properties for generalized Petersen graphs. We characterized cores and endomorphism transitive members of this class, see also Figure~\ref{fig:petersenplane}. This part of the research could be extended towards a deeper understanding of retracts of generalized Petersen graphs that are not cores. In particular, we believe that 
\begin{conjecture}
 The image of any endomorphism of a non bipartite generalized Petersen graph is a retract. 
\end{conjecture}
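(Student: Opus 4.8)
The plan is to separate the core case from the non-core case and, in the latter, to reduce the statement to a purely semigroup-theoretic condition on $\mathrm{End}(G(n,k))$ that can then be attacked with the cycle-rigidity driving the proofs of Theorem~\ref{thm:cores} and Proposition~\ref{pr:bimpliesa}. First, if $G(n,k)$ is a core then every endomorphism is an automorphism and its image is $G(n,k)$ itself, trivially a retract, so I would assume $G(n,k)$ is not a core. By Theorem~\ref{thm:cores} and the discussion in the proof of Corollary~\ref{cor:endtrans}, the odd girth is then $g=n/\gcd(n,k)$ (an odd number), no shortest odd cycle uses a spoke, and the unique core is a single inner $g$-cycle $C_g$. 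The first reduction I would record is the elementary observation that any induced subgraph $X\subseteq G(n,k)$ that is itself a core and receives a homomorphism $f\colon G(n,k)\to X$ is automatically a retract: $f|_X$ is an endomorphism of the core $X$, hence an automorphism, so $(f|_X)^{-1}\circ f$ is a retraction. Consequently every endomorphism whose image is a core (in particular every sufficiently contractive one) already satisfies the conjecture, and the content lies entirely in endomorphisms whose image is not a core.

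For an arbitrary $\varphi\in\mathrm{End}(G(n,k))$ with image $Y=\varphi(G(n,k))$, I would use that retracts are exactly the images of idempotent endomorphisms and that the descending chain $Y\supseteq\varphi^2(G(n,k))\supseteq\cdots$ stabilizes at an eventual image $Z$ on which a suitable power of $\varphi$ acts as the identity; thus $Z$ is a retract with $Z\subseteq Y$. The conjecture becomes equivalent to the statement that $Y=Z$ for every $\varphi$, that is, that $\varphi$ restricted to its own image $Y$ is injective (equivalently $|\varphi^2(G(n,k))|=|\varphi(G(n,k))|$). This is the reformulation I would aim to prove. The key structural input is that $\varphi$ must map every shortest odd cycle isomorphically onto a shortest odd cycle; since outside the prism family these are precisely the inner cycles, $\varphi$ is forced to act rigidly on the inner rims, and the spoke and outer-rim images are then strongly constrained. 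I would exploit this rigidity, exactly as in the degree and inner-cycle-count analysis of Proposition~\ref{pr:bimpliesa} (Cases~1 and~2 on the number $t$ of inner cycles), to argue that a collapse of two vertices already lying in $Y$ would force an odd $g$-cycle through four spokes, contradicting Lemma~\ref{lem:oddgirthinteriorcycles}.

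Concretely, I would first settle the base family of odd prisms $G(n,1)$ with $n$ odd, where $Y$ ranges only over sub-prisms and single odd cycles and injectivity of $\varphi|_Y$ can be verified directly via Lemma~\ref{lem:genprismnocore}. Then, for $\gcd(n,k)=d\geq 2$, I would classify the possible images $Y$ according to how many inner $g$-cycles they retain and how the outer $n$-cycle is folded onto them, producing in each case an explicit retraction $G(n,k)\to Y$ that extends the fibre-wise map of~\eqref{eq:defretracto}.

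The main obstacle is precisely this classification. Unlike the core, a general image need not be vertex-transitive or degree-homogeneous, so Proposition~\ref{prop:cuore} no longer forces all inner (respectively outer) vertices to have equal degree in $Y$, and one must control all the ways an endomorphism can simultaneously fold the outer rim onto several inner cycles. Verifying that none of these foldings yields an image on which $\varphi$ fails to be injective — or, when it does, exhibiting a different idempotent endomorphism with the same image — is the crux that currently keeps the statement at the level of a conjecture.
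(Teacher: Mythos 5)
This statement is not proved in the paper at all: it appears in the Conclusions as an open conjecture, so there is no proof of record to compare against, and your attempt must stand on its own. It does not. What you have written is a proof plan whose preliminary reductions are correct but whose core is missing, and you say so yourself. The sound parts are: (i) the core case is trivial; (ii) the eventual image $Z=\varphi^m(G(n,k))$ of the descending chain $\varphi(G(n,k))\supseteq\varphi^2(G(n,k))\supseteq\cdots$ is always a retract, since $\varphi$ restricted to $Z$ is a bijective endomorphism of a finite graph, hence an automorphism, and a suitable power of $\varphi$ is then a retraction onto $Z$; (iii) an induced subgraph that is itself a core and receives a homomorphism from $G(n,k)$ is a retract; (iv) shortest odd cycles map isomorphically onto shortest odd cycles, which in the non-core, non-prism case are the inner rims. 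None of these touches the only hard case: an endomorphism $\varphi$ that is not injective on its own image $Y=\varphi(G(n,k))$. Your plan for that case ("classify the possible images $Y$ according to how many inner $g$-cycles they retain and how the outer $n$-cycle is folded onto them, producing in each case an explicit retraction") is precisely the content of the conjecture, and you concede in your final paragraph that you cannot carry it out. That admitted step is the gap, and it is the whole problem, not a technical loose end.

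Two further points. First, your claim that "the conjecture becomes equivalent to the statement that $Y=Z$ for every $\varphi$" is wrong as an equivalence: injectivity of $\varphi|_Y$ for every endomorphism is \emph{sufficient} for the conjecture but not necessary, since an image on which $\varphi$ folds could still be a retract via a completely different map (you implicitly acknowledge this when you later allow "exhibiting a different idempotent endomorphism with the same image", but then the reformulation you propose to prove is not the one you stated). Second, the rigidity machinery you want to import from Proposition~\ref{pr:bimpliesa} and Lemma~\ref{lem:oddgirthinteriorcycles} does not transfer: those arguments lean on Proposition~\ref{prop:cuore}, which forces degree-homogeneity on inner and outer vertices of a \emph{retract}; an arbitrary endomorphic image enjoys no such homogeneity (as you yourself note), so the case analysis on the number $t$ of inner cycles, and the four-spokes contradiction, cannot be invoked for a general image without a new argument. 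In short: correct reductions, a self-acknowledged missing classification, and one misstated equivalence; the statement remains, as in the paper, a conjecture.
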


Note that this statement is not true for bipartite generalized Petersen graphs as shown by the M\"obius-Kantor graph $G(8,3)$ in Figure~\ref{fig:G83}. The white vertices are the image of an endomorphism but are not a retract because they do not induce an isometric subgraph.

\begin{figure}[ht]
\begin{center}
\includegraphics[width = .9\textwidth]{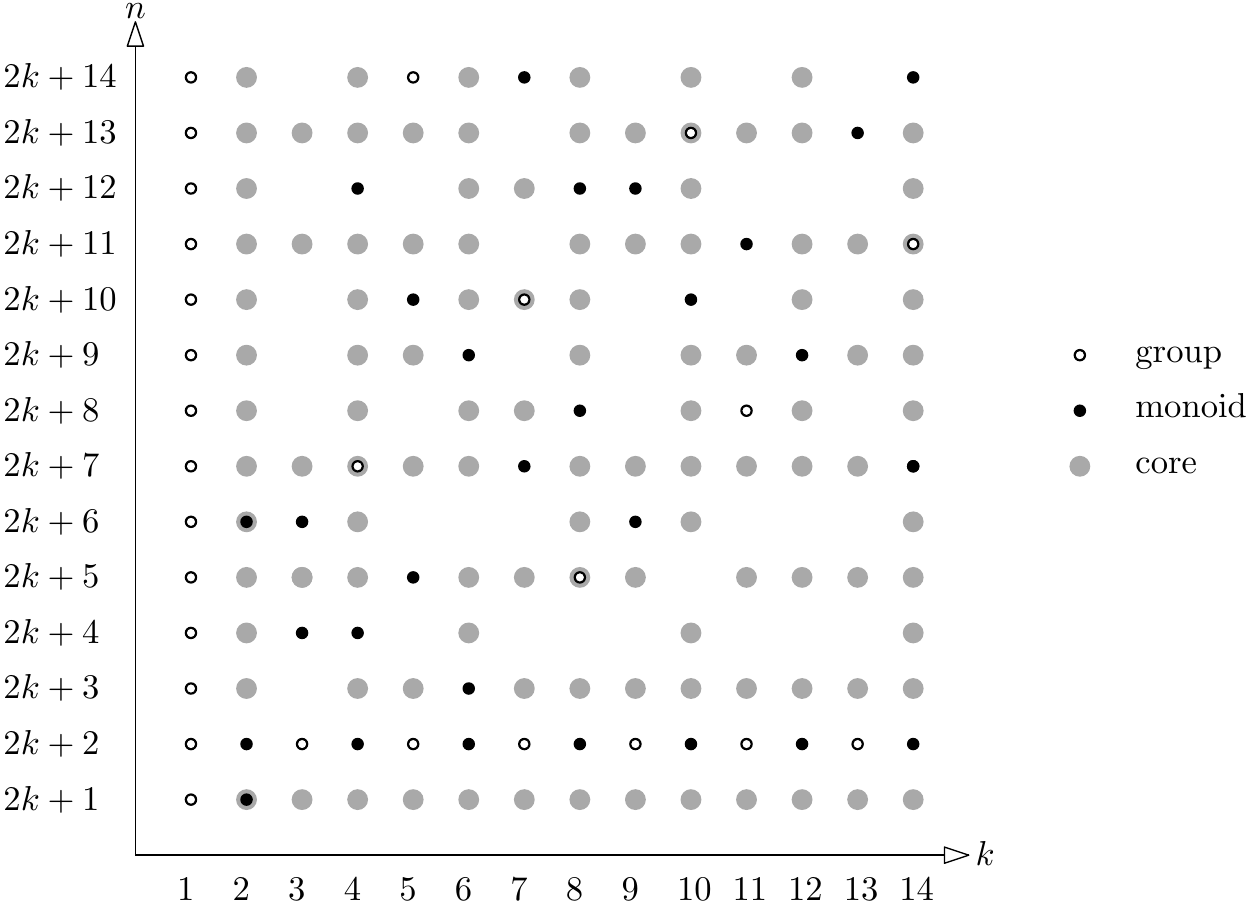}
\caption{Cores, group Cayley graphs, and known (non-group) monoid Cayley graphs in the generalized Petersen plane.}
\label{fig:petersenplane}
\end{center}
\end{figure}
 
Furthermore, we described large families of generalized Petersen graphs that are monoid graphs, see Figure~\ref{fig:petersenplane}. In particular, we characterized those generalized Petersen graphs that are monoid graphs with respect to a generating system of size $2$. We conjecture that the Desargues graph is the only other generalized Petersen graph that is monoid graphs with respect to a connection set of size $2$ (Conjecture~\ref{conj:Desargues}). However, we also exposed the dodecahedron being the only generalized Petersen graph we know of that is a monoid graph but only with respect to a connection set of size $3$. This proves that all graphs of Platonic solids are monoid graphs. In order to establish the same for Archimedean solids, one needs to find a monoid representation for the graph of the Icosidodecahedron. It remains open, whether all generalized Petersen graphs are monoid or semigroup graphs. The first cases are $G(7,2)$ and $G(7,3)$ fo which we know by Proposition~\ref{prop:mon2gendentopar} that they need a connection set of size $3$. Recall that while there are non-monoid graphs~\cite{Kna-21}, it is open whether all graphs are semigroup graphs. We dare:
\begin{conjecture}
All generalized Petersen graphs are semigroup graphs. 
\end{conjecture}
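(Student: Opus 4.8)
The plan is to attempt a constructive proof: for every admissible pair $(n,k)$ exhibit a semigroup $S$ and a connection set $C$ whose Cayley graph has underlying graph $G(n,k)$. The cases already settled by Theorem~\ref{thm:mon2gen} dispose of $k^2\equiv 1$ and $k^2\equiv\pm k\pmod n$, and since every cubic Cayley graph forces $2\le|C|\le 3$, the genuine difficulty lies with those $(n,k)$ for which $k\not\equiv\pm1\pmod{n/\gcd(n,k)}$. To move beyond the two-generator monoids classified in Theorem~\ref{thm:mon2gen} (recall that $G(7,2)$ and $G(7,3)$ already require $|C|=3$ by Proposition~\ref{prop:mon2gendentopar}), it is natural to aim for $|C|=3$. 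A clean target is a \emph{digon realization}: with $2n$ vertices, $3n$ edges, and $|C|=3$ one has exactly $6n=|C|\cdot|V|$ arcs, so replacing each edge of $G(n,k)$ by a pair of oppositely oriented arcs is numerically consistent and lets one dispense with loops.

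The concrete construction I would attempt reuses the building blocks of Section~3: a cyclic group $\mathbb{Z}_n$ acting as rotation on the outer rim, glued to band- and null-type pieces carrying the inner rims and the spokes by way of Lemmas~\ref{lem:monoid} and~\ref{lem:homomomo}. One generator should produce the outer cycle, a second the spokes, and a third the shift by $k$ along the inner rims. The two-generator orthogroup of Theorem~\ref{thm:Cay1} is exactly the degenerate case in which the inner shift coincides, up to sign, with the spoke generator, i.e. $k\equiv\pm1\pmod{n/\gcd(n,k)}$; the purpose of the third generator is precisely to decouple the inner twist from the spoke once this coincidence fails.

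The hard part will be associativity of the resulting multiplication when the inner shift by $k$ is genuinely independent of the spoke. In the group case the relation $\gamma\alpha\gamma=\alpha^k$ records exactly how the inner twist interacts with rotation, and it is available only when $k^2\equiv\pm1\pmod n$; for general $k$ there is no such invertible conjugation, so any semigroup substitute must route the mixed products such as $u_i b c$ and $v_i b a$, together with all their rearrangements, consistently. Because there exist graphs that are not monoid graphs at all~\cite{Kna-21}, one cannot expect a neutral element to survive in general, and the absence of an identity both enlarges the search space and removes the usual anchor for verifying associativity. I would therefore first gather data by an exhaustive computer search on the smallest open instances $G(7,2)$ and $G(7,3)$, seeking a multiplication table that splits, in the spirit of Theorem~\ref{thm:Cay1}, into a rotation group together with a band-indexed inner part. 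From such examples I would try to isolate a single parametrized gluing lemma -- a three-homomorphism analogue of Lemma~\ref{lem:homomomo} -- whose associativity can be checked once and for all, and then verify that an appropriate specialization yields $G(n,k)$ for every admissible $(n,k)$.
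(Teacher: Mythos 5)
There is a genuine gap, and in fact two distinct problems. First, the statement you are asked to prove is precisely the conjecture that the paper itself leaves open (the authors write ``We dare:'' and offer no proof), and your proposal does not close it either: it is a research plan, not a proof. Every reduction you make (the settled cases $k^2\equiv 1$ and $k^2\equiv\pm k \pmod n$, the focus on $|C|=3$) only delimits the problem; the entire mathematical content --- exhibiting a semigroup $S$ and connection set $C$ with underlying graph $G(n,k)$ when $k\not\equiv\pm 1\pmod{n/\gcd(n,k)}$ and $k^2\not\equiv 1\pmod n$ --- is deferred to an unstated ``three-homomorphism analogue of Lemma~\ref{lem:homomomo}'' whose multiplication rules are never written down and whose associativity (which you yourself identify as the hard part) is never verified, and to computer searches that are proposed but not carried out. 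Nothing is constructed, so nothing is proved.

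Second, one concrete element of your strategy provably fails, by results in this very paper. Your ``clean target'' of a loopless digon realization (every edge replaced by two oppositely oriented arcs, no loops) is impossible for exactly the first instances you propose to attack. Both $G(7,2)$ and $G(7,3)$ are cores by Corollary~\ref{cor:primos} (since $\gcd(n,k)=1$, $n$ odd, $k\neq 1$), they have no four-cycles (since $n\neq 4k$), and they are not group graphs by Theorem~\ref{thm:cayleygroup} (since $k^2\not\equiv 1\pmod 7$). Hence Lemma~\ref{lem:core} (equivalently Corollary~\ref{cor:petnotcayley}) forces any semigroup Cayley graph realizing them to have loops: the loopless digon picture cannot exist, and the arc-count ``numerical consistency'' you invoke is misleading --- for these graphs some of the $6n$ arcs must be spent on loops, so not every edge can carry a digon. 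Any successful construction must therefore build loops in from the start (as the paper's own monoid examples for the Petersen, Desargues and dodecahedron graphs do on their null-semigroup parts). A further small inaccuracy: the bound $2\le|C|\le 3$ is derived in the paper for \emph{monoid} graphs, using the degree of the identity vertex $e$; for a semigroup without identity only the lower bound $|C|\ge 2$ survives the edge count, so you cannot a priori restrict a semigroup realization to $|C|\le 3$.
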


Given a loop-free semigroup representation of $G$, one gets easily one for the Kronecker double cover $G\times K_2$. However, comparing with~\cite{KP:2019} one finds that all Kronecker covers of graphs from the family in Theorem~\ref{thm:Cay1} that are Generalized Petersen graphs, fall into the same family. More generally, a \emph{covering map} from a graph $\hat{G}$ to a graph $G$ is a surjective graph homomorphism $\varphi:\hat{G}\to G$ such that for every vertex $v\in \hat{G}$,  $\varphi$ induces a one-to-one correspondence between edges incident to $v$ and edges incident to $\varphi(v)$. If there is a covering map from $\hat{G}$ to $G$, we say that $\hat{G}$ is a \emph{covering} of $G$. For instance both the dodecahedron as well as the Desargues graph are coverings of the Petersen graph. It is an interesting question what further properties are needed in order to lift a loop-free semigroup representation a graph to its covering. On the other hand, generalizing results of~\cite{KP:2019}, we wonder 
\begin{question}
Which generalized Petersen graphs are (non-trivial) coverings of  generalized Petersen graphs? 
\end{question}

\subsubsection*{Acknowledgments.}
We thank Ulrich Knauer for helpful comments on the manuscript. 
The second author was partially supported by the French \emph{Agence nationale de la recherche} through
project ANR-17-CE40-0015 and by the Spanish \emph{Ministerio de Econom\'ia, Industria y Competitividad}
through grant RYC-2017-22701. Both authors were partially supported by the Spanish
MICINN through grant PID2019-104844GB-I00 and by the Universidad de La Laguna MASCA and MACACO
projects.

\bibliography{lit}
\bibliographystyle{my-siam}
\end{document}